\pgfplotsset{compat=1.14}
\theoremstyle{plain}
\newtheorem{theorem}{Theorem}
\newtheorem{corollary}{Corollary}
\newtheorem{proposition}{Proposition}
\newtheorem{lemma}{Lemma}
\theoremstyle{remark}
\newtheorem{remark}{Remark}
\theoremstyle{definition}
\newtheorem{definition}{Definition}
\newtheorem{exmp}{Example}
\numberwithin{equation}{subsection}
\title{Point-Line Geometry in the Tropical Plane}
\author{ayush kumar tewari}
\address{Technische Universität Berlin,
  Chair of Discrete Mathematics/Geometry}
\email{tewari@math.tu-berlin.de}
\subjclass{14T15, 52C35}
\keywords{stable tropical lines, semiuniform faces, tropical near pencil}
\begin{document}
\definecolor{wrwrwr}{rgb}{0.3803921568627451,0.3803921568627451,0.3803921568627451}
\definecolor{rvwvcq}{rgb}{0.08235294117647059,0.396078431372549,0.7529411764705882}

\begin{abstract}
We study the classical result by Bruijn and Erd\H os regarding the bound on the number of lines determined by a $n$-point configuration in the plane, and in the light of the recently proven Tropical Sylvester-Gallai theorem, come up with a tropical version of the above-mentioned result. In this work, we introduce stable tropical lines, which help in answering questions pertaining to incidence geometry in the tropical plane. Projective duality in the tropical plane helps in translating the question for stable lines to stable intersections that have been previously studied in depth. Invoking duality between Newton subdivisions and line arrangements, we are able to classify stable intersections with shapes of cells in subdivisions, and this ultimately helps us in coming up with a bound. In this process, we also encounter various unique properties of linear Newton subdivisions which are dual to tropical line arrangements.
\end{abstract}

\maketitle

\section{Introduction}

Point-line geometry has been studied for a long time, and it mainly deals with the question of \textbf{incidence}, i.e.\ when a point meets a line. There are many classical results established about the incidence of points and lines in projective and affine planes like the Sylvester-Gallai theorem, de-Bruijn Erd\H os theorem, Szemeredi-Trotter theorem, Beck's theorem etc. In recent times, there has been a lot of development in generalising these classical results, like \cite{JH17} surveys the work done on generalizations of de-Bruijn Erd\H os theorem. Likewise in \cite{FZ18}, ordinary lines in three space are studied and results regarding spanned lines and Langer's inequality are established. In a recent study in \cite{JMRS20}, tropical lines present in a fixed plane are also studied.

Since tropical geometry provides a piecewise linear model of point line geometry, many incidence geometric results have also been proved in it. In \cite{DR17} a tropical version of Sylvester-Gallai theorem and Motzkin-Rabin theorem is established along with the universality theorem. In \cite{LT11} the term \textbf{geometric construction} is coined ,  in order to identify all the types of classical incidence geometric results which can have a tropical analogue. Even in \cite{BS05} and \cite{LT05} a tropical version of Pappus theorem is discussed along with classical point-line configurations. Another aspect is the relation to oriented matroids, and as mentioned in \cite{DR17}, it is elaborated in \cite{AF09}, in the context of hyperplane arrangements and how they correspond to tropical oriented matroids and how these matroids encode incidence information about point-line structures in the tropical plane. The fact that the tropical plane allows tropical duality, felicitates much of the above mentioned results. 

In this article, we start with some basic notions of point line geometry and specifically the point-line geometry in the tropical plane. Subsequently, using the results obtained in \cite{DR17} and by introducing the notion of stable tropical lines we state a tropical counterpart to de-Bruijn-Erd\H os theorem. We also establish the equivalence between a much general notion of stability for curves, in \cite{LT11}, and the stable lines that we define in our work. We find that tropicalization of generic lifts of points determines the stable tropical line passing through them. We establish the duality between stable lines and stable intersections and provide a full classification of the faces that they correspond to in the dual Newton subdivision. With this setup, we prove the following tropical analogue of de-Bruijn-Erd\H os theorem, 

\begin{theorem}[Tropical de-Bruijn-Erd\H os Theorem]\label{thm:erdos}
Let $\mathcal{S}$ denote a set of points in the tropical plane. Let $v$ $(v \geq 4)$ denote the number of points in $\mathcal{S}$, and let $b$ denote the number of stable tropical lines determined by these points. Then,

\begin{enumerate}
    \item $b \geq v - 3$
    \item if $b = v - 3$, then $\mathcal{S}$ forms a \textbf{tropical near-pencil}.
\end{enumerate}
\end{theorem}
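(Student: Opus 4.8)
The plan is to use tropical projective duality to turn the statement into a combinatorial count on a Newton subdivision, and then to argue by induction on $v$. First I would pass to the dual picture: the set $\mathcal S=\{p_1,\dots,p_v\}$ corresponds to an arrangement $\mathcal A$ of $v$ tropical lines $\ell_1,\dots,\ell_v$, and the stable tropical line through $p_i$ and $p_j$ is dual to the stable intersection $\ell_i\cap_{\mathrm{st}}\ell_j$. Hence $b$ equals the number of distinct points that occur as stable intersections of pairs of lines of $\mathcal A$, and it suffices to prove that any arrangement of $v\ge 4$ tropical lines has at least $v-3$ distinct stable intersection points, with equality only for the arrangement dual to a tropical near-pencil.

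Next I would invoke the dual Newton subdivision $\mathcal D$ of $v\Delta$ attached to $\mathcal A$ together with the classification --- established earlier in the paper --- of stable intersections in terms of the shapes of cells of $\mathcal D$: every distinct stable intersection of $\mathcal A$ is recorded by a cell of a prescribed (``semiuniform'') type, so counting stable lines becomes counting those distinguished cells, a quantity one can also control through Euler's formula and the lattice-point structure of $v\Delta$. With this dictionary, I would induct on $v$. The base case $v=4$ is immediate, since any two of the four points already determine a stable line, so $b\ge 1=v-3$. For the inductive step I would delete a point $p_v$ --- equivalently remove the line $\ell_v$, which coarsens $\mathcal D$ --- apply the inductive hypothesis to get $b'\ge (v-1)-3$ stable lines from the remaining points, and then show that reinstating $p_v$ creates at least one new stable line: concretely, that $\ell_v$ meets the rest of $\mathcal A$ in a stable intersection point not shared by the sub-arrangement, by tracking which cell of $\mathcal D$ is refined when $\ell_v$ is added and applying the cell-shape classification. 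The only way this can fail is precisely when $p_v$ sits in the configuration defining a tropical near-pencil.

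The main obstacle is the equality case, which also forces one to handle the tropical degeneracies that make this result differ from the classical de-Bruijn-Erd\H os theorem. Unlike the classical situation, many pairs of tropical points may share a stable line without the configuration being degenerate --- a ``tropical pencil'' does not collapse all stable intersections to one point --- which is exactly why the bound is $v-3$ rather than $v$ and why no non-collinearity hypothesis appears. To settle equality I would determine which subdivisions $\mathcal D$ of $v\Delta$ realize the minimal number of distinguished cells, using the structure theory of linear Newton subdivisions dual to tropical line arrangements and their semiuniform faces, and then verify that these minimal subdivisions are exactly the ones dual to tropical near-pencils, so the induction is tight only along near-pencils. Showing that the extremal subdivision is forced (up to the symmetries of $\Delta$) is the delicate point; the rest --- the base case, the non-extremal inductive step, and translating the extremal subdivision back through duality into the point configuration --- should be routine given the earlier classification.
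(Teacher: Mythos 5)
Your reduction to the dual picture is exactly the paper's starting point: stable lines correspond to stable intersections of the dual arrangement, and these are read off from the non-triangular cells of the dual Newton subdivision. The gap is in the inductive step. You assert that re-inserting the deleted line $\ell_v$ ``creates at least one new stable line,'' but this is false as stated: take three tropical lines passing through a common point $P$ transversally, one through its horizontal ray, one through its vertical ray, one through its diagonal ray, and then add a fourth line whose vertex is at $P$. Every pairwise stable intersection of the four-line arrangement is the single point $P$, so the count of distinct stable intersection points does not increase when the fourth line is added (it stays at $1$). Consequently the induction does not close on the strength of the hypothesis $b'\geq (v-1)-3$ alone: in the dangerous case where the sub-arrangement is extremal, $b'=(v-1)-3$, and the increment is zero, you would need to know that this situation cannot occur (equivalently, that a zero increment forces $b'$ to already exceed its own bound, or forces the full arrangement to be a near-pencil with $b=v-3$). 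That dichotomy is precisely the substantive content of the theorem and is nowhere argued in your sketch; your remark that failure happens ``precisely when $p_v$ sits in the configuration defining a tropical near-pencil'' is the claim that needs proof, not a consequence of the setup. The equality case is likewise deferred to ``determine which subdivisions realize the minimal number,'' which restates the problem rather than solving it.

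For comparison, the paper avoids induction on the number of lines altogether. It splits the non-triangular cells into those dual to stable intersections of second kind, whose number is $n-v$ where $v$ is the number of triangular faces (both kinds of cells are contributed by vertices of lines), and those of first kind (semiuniform faces), and proves by an induction on the number of triangular faces --- via an explicit rearrangement of the line arrangement that removes one triangle at a time while consuming one determined semiuniform face --- that the first-kind count $k$ is at least the number $p$ of non-corner triangles. Since at most $q\leq 3$ triangles sit at the corners, $b=(n-v)+k\geq n-q\geq n-3$; sharpness then forces $q=3$ and no interior triangles, which is the definition of a near-pencil arrangement. Some argument of this finer, cell-level kind (or a strengthened inductive hypothesis tracking when the bound is attained) is what your proposal is missing.
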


The definitions and the results required to state and prove the above result are elaborated in the latter part of the article.

\textbf{Acknowledgements} - I am sincerely thankful to Hannah Markwig who had regular discussions with me and went through earlier drafts of this work and gave concrete suggestions which immensely helped in this piece of work. I would also like to thank Michael Joswig, Marta Panizzut, Dhruv Ranganathan, Yue Ren for fruitful conversations and guidance during the time I was working on this problem. This research is supported by the Deutsche Forschungsgemeinschaft (SFB-TRR 195 \enquote{Symbolic Tools in Mathematics and their Application}). I would also like to thank the Mittag-Leffler Institute which hosted me for the semester program \enquote{Tropical Geometry, Amoebas and Polytopes} where a significant part of the work done on this article was carried out.

\section{Classical Incidence Geometry}

In classical incidence geometry a \textbf{linear space} is defined in the following manner \cite{PR83},

\begin{definition}\label{thm: tropical-LS}
A \emph{finite linear space} is a pair $(X,\mathcal{B})$, where $X$ is a finite set and $\mathcal{B}$ is a set of proper subsets of $X$, such that 
\begin{enumerate}
    \item every unordered pair of elements of $X$ occur in a unique $B$ $ \in \mathcal{B}.$
    \item Every $B$ $\in \mathcal{B}$ has cardinality at least two.
\end{enumerate}
\end{definition}

Essentially, a linear space is a point-line incidence structure, in which any two points lie on a unique line.

\begin{exmp}
Consider $L = (X, \mathcal{B})$, where $X$ is the set of points in the Euclidean plane and $\mathcal{B}$ is the set of lines determined by $X$  .
\end{exmp}

Another important definition about lines is,

\begin{definition}
A line which passes through exactly two points is called an \emph{ordinary} line.
\end{definition}

Erd\H os and de-Bruijn, came up with a theorem about point-line arrangements in a linear space \cite{PB48}, which is established in the following manner \cite{LB97},

\begin{theorem}[de-Bruijn-Erd\H os Theorem]\label{thm: tropical-BE}
Let $S = (X,\mathcal{B})$ be a linear space. Let $v$ denote the number of points in $S (= |X|$), and $b$ denote the number of lines determined by these points $(= |\mathcal{B}|$, $b > 1)$. Then
\begin{enumerate}
    \item $b \geq v$,
    \item if $b = v$, any two lines have a point in common. In case (2), either one line has $v - 1$ points and all others have two points, or every line has $k + 1$ points and every point is on $k + 1$ lines, $k \geq 2$. 
\end{enumerate}
\end{theorem}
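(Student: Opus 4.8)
The plan is to prove the two parts separately: the bound $b \ge v$ falls out of a positive-definiteness (rank) argument, and the equality structure is extracted from a double-counting identity that becomes tight exactly when $b=v$. Throughout, for a point $p \in X$ I write $r_p$ for the number of blocks $B \in \mathcal{B}$ containing $p$, and for a block $L$ I write $k_L = |L| \ge 2$. First I would record two elementary facts. (a) \emph{Incidence lemma}: if $p \notin L$ then $r_p \ge k_L$, since the blocks $\overline{pq}$ (the unique block through $p$ and $q$) for $q \in L$ are pairwise distinct — a coincidence $\overline{pq_1}=\overline{pq_2}$ would be a block containing two distinct points of $L$, hence equal to $L$, contradicting $p \notin L$. (b) $r_p \ge 2$ for every $p$: if $r_p = 1$, the unique block through $p$ would have to contain every other point, forcing $b=1$ against $b>1$. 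Dually, $k_L \le v-1$ and $r_p \le v-1$, so no block contains all points.

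For part (1), let $M$ be the $v \times b$ point--block incidence matrix. Because any two distinct points lie in a unique block, $MM^{\mathsf T} = \operatorname{diag}(r_p - 1) + J$, where $J$ is the all-ones matrix: the diagonal entries are $r_p$ and the off-diagonal entries are $1$. Since every $r_p - 1 \ge 1$ by fact (b), this is the sum of a positive definite diagonal matrix and a positive semidefinite rank-one matrix, hence positive definite of rank $v$. Therefore $v = \operatorname{rank}(MM^{\mathsf T}) \le \operatorname{rank}(M) \le \min(v,b) \le b$, which is assertion (1).

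For part (2), I would use the two exact identities obtained by summing over \emph{anti-flags}, i.e. pairs $(p,L)$ with $p \notin L$, grouped respectively by point and by block: $\sum_{p \notin L}(b-r_p)^{-1} = v$ and $\sum_{p \notin L}(v-k_L)^{-1} = b$ (the denominators are positive by fact (b)). For an anti-flag, $r_p \ge k_L$ from the incidence lemma together with $b \le v$ gives $b - r_p \le v - k_L$, hence $(b-r_p)^{-1} \ge (v-k_L)^{-1}$. In the equality case $b=v$ the two identities give $\sum(b-r_p)^{-1} = \sum(v-k_L)^{-1}$ while each summand on the left dominates its counterpart, so every inequality is tight and $r_p = k_L$ for \emph{every} anti-flag $(p,L)$. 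From this I would first show any two blocks meet: if $L,L'$ were disjoint, pick $p \in L$, so $p \notin L'$ and $r_p = k_{L'}$; the $k_{L'}$ distinct blocks $\overline{px}$ for $x \in L'$ then exhaust all blocks through $p$, yet $L$ itself is a block through $p$, forcing $L$ to meet $L'$ — a contradiction. Thus any two blocks meet in exactly one point.

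It remains to establish the dichotomy. Suppose two blocks $L_1, L_2$ have sizes $k_1 < k_2$ and meet at $x$. A point lying off both would give $k_1 = r = k_2$ by the relation $r_p = k_L$, so no such point exists and $X = L_1 \cup L_2$ with $v = k_1 + k_2 - 1$. One checks that every cross join $\overline{zy}$ with $z \in L_1 \setminus \{x\}$ and $y \in L_2 \setminus \{x\}$ must be a two-point block (a third point would lie on $L_1$ or $L_2$ and collapse the join onto one of them), whence $b = 2 + (k_1-1)(k_2-1)$; equating this with $v = k_1+k_2-1$ yields $(k_1-2)(k_2-2)=0$, so $k_1 = 2$ and $k_2 = v-1$, exactly a near-pencil. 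Consequently, if the configuration is not a near-pencil then all blocks share a common size $k$; since each point lies off some block, $r_p = k$ for every $p$, so every point has degree $k$ and every block has $k$ points, giving a projective plane (matching the statement, with the degenerate case $k=2$, forcing $v=3$, absorbed into the near-pencil branch). I expect the equality analysis to be the main obstacle rather than the bound itself: the inequality is immediate from positive-definiteness, whereas identifying the two exact shapes requires first proving that any two blocks meet and then the size-counting that collapses the unequal-size situation to a near-pencil.
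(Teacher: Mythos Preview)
The paper does not actually prove this theorem: it is quoted as classical background, with references to \cite{PB48} and \cite{LB97}, and the only proof-related remark is the parenthetical observation that the Euclidean special case follows by induction from the Sylvester--Gallai theorem. That inductive route is not available for arbitrary linear spaces (Sylvester--Gallai fails in finite projective planes, for instance), so there is no ``paper's own proof'' of the general statement to compare against.

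Your argument, by contrast, handles the full linear-space version and is correct. The rank/positive-definiteness computation of $MM^{\mathsf T}=\operatorname{diag}(r_p-1)+J$ is the standard Fisher-type proof of $b\ge v$, and your anti-flag identities
\[
\sum_{p\notin L}\frac{1}{b-r_p}=v,\qquad \sum_{p\notin L}\frac{1}{v-k_L}=b,
\]
combined with $r_p\ge k_L$ for anti-flags, are exactly the de~Bruijn double-count that forces $r_p=k_L$ on every anti-flag when $b=v$. The deductions that any two blocks meet, and that unequal block sizes collapse via $(k_1-2)(k_2-2)=0$ to a near-pencil while equal sizes give a $(k+1,1)$-design, are carried out cleanly. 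Two very minor points: your justification of $r_p\ge 2$ should appeal to the fact that blocks are \emph{proper} subsets (so the unique block through $p$ cannot be all of $X$) rather than to $b>1$; and to make the anti-flag sums well-defined you implicitly need $r_p<b$, which follows because for any $p$ one can find a block avoiding $p$ (pick $q\ne p$ on the block through $p$, then any $r$ off that block, and look at $\overline{qr}$). Neither affects the validity of the argument.
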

 
For a more general treatment and recent developments, one can read \cite{JH17}, where enumerative results like the above have been discussed in a more  general setting of geometric lattices.

The above result is clearly a very general statement, and in the case for points and lines in the Euclidean plane, the bound on the number of lines is attained when points are in a near -pencil configuration and the proof follows by induction, by invoking the following result 

\begin{theorem}[Sylvester-Gallai Theorem]
Given a finite collection of points in the Euclidean plane, such that not all of them lie on one line, then there exists a line which passes through exactly two of the points.
\end{theorem}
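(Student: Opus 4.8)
The plan is to argue by contradiction via Kelly's extremal-distance method, which is the shortest route to this statement. Suppose the finite point set $\mathcal{P}$ is not contained in a single line, yet no \emph{ordinary} line exists; that is, every line meeting $\mathcal{P}$ in at least two points in fact meets it in at least three. Call a line \emph{connecting} if it passes through at least two points of $\mathcal{P}$, and consider all non-incident pairs $(P,\ell)$ in which $\ell$ is a connecting line and $P\in\mathcal{P}$ does not lie on $\ell$. Because $\mathcal{P}$ is finite, there are finitely many connecting lines and hence finitely many such pairs; because the points are not all collinear, this set is nonempty. Therefore the perpendicular distance $d(P,\ell)$ attains a strictly positive minimum over these pairs, and I fix a minimizing pair $(P,\ell)$.

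Next I would analyze the geometry at the foot $F$ of the perpendicular dropped from $P$ onto $\ell$. By the standing assumption $\ell$ carries at least three points of $\mathcal{P}$, so by pigeonhole at least two of them, say $B$ and $C$, lie on one of the two closed rays of $\ell$ emanating from $F$, with $B$ no farther from $F$ than $C$ (the degenerate case $B=F$ is permitted). The claim is that the connecting line $\ell'=\overline{PC}$ together with the point $B$ yields a pair whose distance is strictly smaller than $d(P,\ell)$, contradicting minimality.

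To verify the claim I would place $F$ at the origin with $\ell$ the $x$-axis, so that $P=(0,h)$ with $h=d(P,\ell)>0$, together with $C=(c,0)$ and $B=(b,0)$ satisfying $0\le b<c$. A direct computation of the distance from $B$ to $\overline{PC}$ gives
\[
d(B,\ell')=\frac{h\,(c-b)}{\sqrt{h^{2}+c^{2}}},
\]
and since $0<c-b\le c<\sqrt{h^{2}+c^{2}}$ we obtain $0<d(B,\ell')<h=d(P,\ell)$. As $\overline{PC}$ meets the $x$-axis only at $C\neq B$, the point $B$ does not lie on $\ell'$, so $(B,\ell')$ is again an admissible non-incident pair, now of strictly smaller distance. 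This contradicts the choice of $(P,\ell)$, and the contradiction forces the existence of an ordinary line.

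The conceptual content lies entirely in selecting the right quantity to minimize; once that choice is made the argument is essentially routine. I expect the only place requiring genuine care to be the pigeonhole step together with the accompanying bookkeeping: one must confirm that $B\neq C$, that $B\notin\ell'$, and that $d(B,\ell')>0$, so that the newly produced pair is truly admissible and the inequality $d(B,\ell')<d(P,\ell)$ is strict. Handling the boundary case $B=F$ and verifying that two of the three collinear points really do fall on a common closed ray from $F$ are the finicky details to pin down, but they do not affect the structure of the proof.
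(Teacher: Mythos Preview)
Your argument is correct: this is Kelly's classical extremal-distance proof, and the details you flag (the pigeonhole on closed rays from $F$, the strictness $b<c$ ensuring $B\neq C$ and $B\notin\ell'$, and the inequality $c-b\le c<\sqrt{h^2+c^2}$) are handled properly.

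However, there is nothing to compare against: the paper does not prove the Sylvester--Gallai Theorem. It is stated in Section~2 purely as classical background, invoked only to remark that the Euclidean de~Bruijn--Erd\H{o}s bound follows from it by induction. So your proposal supplies a proof where the paper gives none; it is a welcome addition rather than an alternative route.
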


\section{A Brief Introduction to Tropical Geometry}

Tropical geometry can be defined as the study of  geometry over the tropical semiring $\mathbb{T} = (\mathbb{R} \cup \{-\infty\}$, max, +). A \textbf{tropical polynomial} $p(x_{1}, \hdots , x_{n})$ is defined as a linear combination of tropical monomials with operations as the tropical addition and tropical multiplication.

\begin{center}
$p(x_{1}, \hdots , x_{n}) = a \otimes {x_{1}}^{i_{1}}{x_{2}}^{i_{2}} \hdots {x_{n}}^{i_{n}} \oplus b \otimes {x_{1}}^{j_{1}}{x_{2}}^{j_{2}} \hdots {x_{n}}^{j_{n}} \oplus \hdots $
\end{center}

With the above definitions, we see that a tropical polynomial is a function $p : \mathbb{R}^{n} \longrightarrow \mathbb{R}$ given by maximum of a finite set of linear functions.

\begin{definition}
The \emph{hypersurface} $V(p)$ of $p$ is is the set of all points $w$ $\in \mathbb{R}^{n}$ at which the maximum is attained at least twice. Equivalently, a point $w$ $\in \mathbb{R}^{n}$ lies in $V(p)$ if and only if $p$ is not linear at $w$.
\end{definition}

The tropical polynomial defining a tropical line is given as

\begin{center}
$p(x,y) =  a \otimes x \oplus b \otimes y \oplus c,$ where $a,b,c \in \mathbb{R}$,
\end{center}

and the corresponding hypersurface is the corner locus defined by the above polynomial,

\begin{figure}[H]
    \centering
    \includegraphics[scale=0.7]{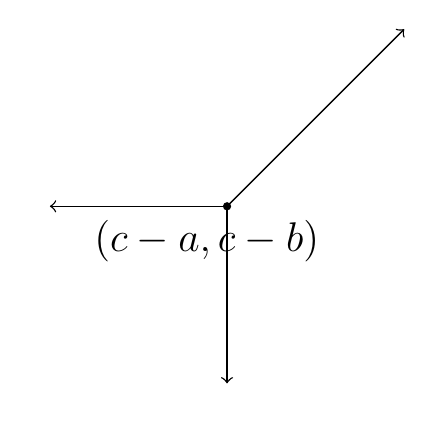}
    \caption{A tropical line}
    \label{fig:tropical_line}
\end{figure}

which is a collection of three half rays emanating from the point $(c - a, c - b)$ in the primitive directions of $(-1,0),(0,-1)$ and $(1,1)$ (Refer \cite{DS15}).

Now we look at the intersections of lines in the tropical plane. As is evident from the setup, tropical lines can intersect over a half ray. However, two tropical lines have a unique \emph{stable intersection}, where a stable intersection is the limit of points of intersection of nearby lines which have a unique point of intersection, within a suitable $\epsilon$, with the limit being taken as $\epsilon$ tends to 0 \cite{DS15}. We refer the reader to \cite{DS15} for further details about stable intersections in full generality. We also define the two types of stable intersections which we encounter in the case of line arrangements,

\begin{definition}
A stable intersection in a tropical line arrangement is called \emph{stable intersection of first kind} if  no vertex of any line from the line arrangement is present at the point of intersection.
\end{definition}

\begin{definition}
A stable intersection in a tropical line arrangement is called \emph{stable intersection of second kind} if the vertex of a line from the line arrangement is present at the point of intersection.
\end{definition}

An important observation is the \textbf{projective duality} which exists in the tropical plane \cite{DR17}, which means that given a set of points $\mathcal{P}$, there exists a incidence preserving map $\phi$ which maps $\mathcal{P}$ to its dual set of tropical lines $\mathcal{L}$, where for each point $P \in \mathcal{P}$, $\phi(P) = l$ with $-P$ as the vertex of the line $l \in \mathcal{L}$.

The \emph{support} of a tropical polynomial is the collection of the exponents of the monomials which have a finite coefficient. The convex hull of the exponents in the support of a tropical polynomial defines a \textbf{Newton polytope}. A subdivision of a set of points in $\mathbb{R}^{2}$, is a polytopal complex which covers the convex hull of the set of points and uses a subset of the point set as vertices. If such a subdivision of points is induced by a weight vector $c$, then it is called a \textbf{regular subdivision}. There exists a duality between a tropical curve $T$, defined by a tropical polynomial $p$, and the subdivision of the Newton polygon corresponding to $p$, induced by the coefficients of the tropical polynomial $p$. For further details about the description of this duality, the reader can refer to \cite[Chapter~3]{DS15} and \cite[Proposition 2.5]{GM15}. 

For a comprehensive study in a general setting, we analyze the underlying field $K$. A \textbf{valuation} on $K$ is a map $val : K \rightarrow \mathbb{R} \cup \{\infty\}$ such that it follows the following three axioms \cite{DS15}
\begin{enumerate}
    \item $val(a) = \infty$ if and only if $a=0$;
    \item $val(ab) = val(a) + val(b)$;
    \item $val(a + b) \geq min\{val(a),val(b)\}$ for all $a, b \in$ K.
\end{enumerate}

An important example of a field with a non-trivial valuation is the field of \textbf{Puiseux series} over a arbitrary field $k$, represented as $K = k\{\{t\}\}$. The elements in this field are formal power series 
\[ k(t) = k_{1}t^{a_{1}} + k_{2}t^{a_{2}} + k_{3}t^{a_{3}}  \hdots ,\]

where each $k_{i} \> \in \>  k, \> \forall \> i$ and $a_{1} < a_{2} < a_{3} < .... $ are rational numbers with a common denominator. This field has a natural valuation $val : k\{\{t\}\} \rightarrow \mathbb{R}$ given by taking a nonzero element $k(t) \in k\{\{t\}\}^{*}$, (where $k\{\{t\}\}^{*}$ represents the non zero element in the field $k\{\{t\}\}$) and mapping it to the lowest exponent $a_{1}$ in the series expansion of $k(t)$ \cite{DS15}.

It is an important observation that the valuation on the field of Puiseux series mimics the operations of a tropical semiring in essence and for further discussions one can think of the underlying field for the computations to be a Puiseux series with non-trivial valuation. So points which are considered in the plane, would have lifts residing in corresponding field of Puiseux series and the map which maps these lifts back to the points is the \emph{tropicalization} map. For a polynomial $f = \sum _{u \in \mathbb{N}^{n+1}} c_{u}x^{u}$, where the coefficients are from the field with a non-trivial valuation, the tropicalization of $f$ can be defined as  \cite{DS15}

\[ \text{trop}(f)(w) = \text{max}\{ -val(c_{u}) + w \cdot u : u  \in \mathbb{N}^{n+1} \> \text{and} \> c_{u} \ne 0 \}    \]

We refer the reader to \cite{DS15} for further details about this map. 

A \textbf{tropical line arrangement} is a finite collection of distinct tropical lines in $\mathbb{R}^{2}$. 

\begin{definition}\label{def:near_pencil_line}
A tropical line arrangement $\mathcal{L}$ is said to be a \textbf{tropical near-pencil arrangement} if in the dual Newton subdivision, for all triangular faces present in the subdivision; at least one of the edges of the triangular face lies on the boundary of the Newton polygon.
\end{definition}

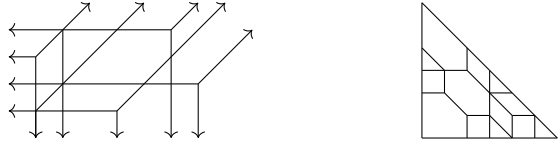
\begin{figure}
\centering
\begin{tikzpicture}[scale=0.18]
\draw[->][] (0,0) -- (2,2);
\draw[->][] (0,0) -- (0,-8);
\draw[->][] (0,0) -- (-4,0);
\draw[->][] (-2,-2) -- (-4,-2);
\draw[][] (-2,-2) -- (0,0);
\draw[->][] (-2,-2) -- (-2,-8);
\draw[->][] (-2,-6) -- (6,2);
\draw[][] (-2,-6) -- (-2,-8);
\draw[->][] (-2,-6) -- (-4,-6);
\draw[][] (-2,-6) -- (4,-6);
\draw[->][] (4,-6) -- (4,-8);
\draw[->][] (4,-6) -- (12,2);
\draw[->][] (10,-4) -- (10,-8);
\draw[->][] (10,-4) -- (-4,-4);
\draw[->][] (10,-4) -- (14,0);
\draw[->][] (8,0) -- (8,-8);
\draw[][] (8,0) -- (0,0);
\draw[->][] (8,0) -- (10,2);
%\fill[black]  (1,2) circle (.1cm)  node[anchor=south west] {};
\end{tikzpicture}
\hspace{2cm}
\begin{tikzpicture}[scale=0.3]
\draw[][] (0,0) -- (0,-2);
\draw[][] (0,-2) -- (1,-3);
\draw[][] (1,-3) -- (2,-3);
\draw[][] (0,-2) -- (0,-3);
\draw[][] (2,-3) -- (2,-2);
\draw[][] (2,-2) -- (0,0);
\draw[][] (1,-3) -- (0,-3);
\draw[][] (0,-3) -- (0,-4);
\draw[][] (1,-3) -- (1,-4);
\draw[][] (0,-4) -- (1,-4);
\draw[][] (1,-4) -- (2,-5);
\draw[][] (2,-5) -- (2,-6);
\draw[][] (2,-6) -- (0,-6);
\draw[][] (0,-4) -- (0,-6);
\draw[][] (2,-3) -- (3,-4);
\draw[][] (3,-4) -- (3,-5);
\draw[][] (3,-5) -- (2,-5);
\draw[][] (3,-5) -- (3,-6);
\draw[][] (2,-6) -- (3,-6);
\draw[][] (2,-2) -- (3,-3);
\draw[][] (3,-3) -- (3,-4);
\draw[][] (3,-4) -- (4,-4);
\draw[][] (3,-3) -- (4,-4);
\draw[][] (3,-4) -- (4,-5);
\draw[][] (4,-5) -- (4,-6);
\draw[][] (4,-6) -- (3,-5);
\draw[][] (3,-6) -- (4,-6);
\draw[][] (4,-5) -- (5,-5);
\draw[][] (4,-4) -- (5,-5);
\draw[][] (4,-6) -- (5,-6);
\draw[][] (5,-5) -- (5,-6);
\draw[][] (5,-5) -- (6,-6);
\draw[][] (5,-6) -- (6,-6);
\end{tikzpicture}
\caption{An example of a tropical near pencil arrangement; a tropical near pencil arrangement (left), and the corresponding dual subdivision (right)}
\label{fig:tropical_near_pencil_arrangement}
\end{figure}

\begin{definition}\label{def:near_pencil_point}
A set of points $\mathcal{N}$ in the tropical plane, is said to form a \textbf{tropical near-pencil} if the dual tropical line arrangement is a tropical near pencil arrangement.
\end{definition}

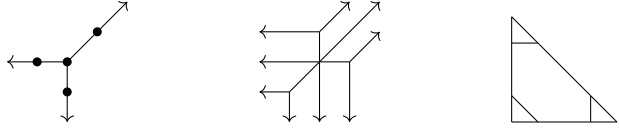
\begin{figure}
\centering
\begin{tikzpicture}[scale=0.20]
\draw[->][] (0,0) -- (-4,0);
\draw[->][] (0,0) -- (0,-4);
\draw[->][] (0,0) -- (4,4);
\fill[black]  (0,0) circle (.3cm);
\fill[black]  (0,-2) circle (.3cm);
\fill[black]  (-2,0) circle (.3cm);
\fill[black]  (2,2) circle (.3cm);
\end{tikzpicture}
\hspace{1.5cm}
\begin{tikzpicture}[scale=0.2]
\draw[][] (0,0) -- (0,2);
\draw[->][] (0,0) -- (-4,0);
\draw[->][] (0,2) -- (-4,2);
\draw[->][] (0,2) -- (2,4);
\draw[->][] (0,0) -- (4,4);
\draw[->][] (0,0) -- (0,-4);
\draw[][] (0,0) -- (-2,-2);
\draw[->][] (-2,-2) -- (-4,-2);
\draw[->][] (-2,-2) -- (-2,-4);
\draw[][] (0,0) -- (2,0);
\draw[->][] (2,0) -- (4,2);
\draw[->][] (2,0) -- (2,-4);
\end{tikzpicture}
\hspace{1.5cm}
\begin{tikzpicture}[scale=0.35]
\draw[][] (0,0) -- (0,-1);
\draw[][] (0,-1) -- (1,-1);
\draw[][] (0,0) -- (1,-1);
\draw[][] (0,-3) -- (0,-4);
\draw[][] (0,-4) -- (1,-4);
\draw[][] (0,-3) -- (1,-4);
\draw[][] (1,-4) -- (3,-4);
\draw[][] (3,-4) -- (4,-4);
\draw[][] (3,-3) -- (4,-4);
\draw[][] (3,-3) -- (1,-1);
\draw[][] (0,-1) -- (0,-3);
\draw[][] (3,-3) -- (3,-4);
\end{tikzpicture}
\caption{An example of a tropical near pencil; a point set with the stable tropical line (left), the dual tropical near pencil line arrangement (center), and the dual subdivision for the line arrangement (right)}
\label{fig:tropical_near_pencil}
\end{figure}

For a tropical line arrangement with lines $l_{1},\hdots,l_{n}$ with corresponding tropical polynomials being $f_{1},\hdots,f_{n}$ the tropical line arrangement, as a union of tropical hypersurfaces, is defined by the polynomial

\[ f = f_{1} \odot f_{2} \hdots  \odot f_{n} \]

The dual Newton subdivision corresponding to the tropical line arrangement is the Newton subdivision dual to the tropical hypersurface defined by the tropical polynomial $f$ (cf. \cite{MJ00}). We realize that stable intersections of first kind correspond to parallelograms and hexagons in the dual Newton subdivision and stable intersections of second kind correspond to irregular cells with four, five or six edges in the dual Newton subdivision.

For an elaborate description of dual Newton subdivisions, corresponding to tropical line arrangements, the reader is advised to refer to \cite[Section 2.3]{DR17}.

\section{Tropical Incidence Geometry}
The behaviour of point-line structures in the tropical plane is distinct from the Euclidean case, specifically with the appearance of \textbf{coaxial} points.

\begin{definition}
Two points are said to be coaxial if they lie on the same axis of a tropical line containing them \cite{DR17}.
\end{definition}

\begin{figure}[H]
    \centering
    \includegraphics[scale=0.65]{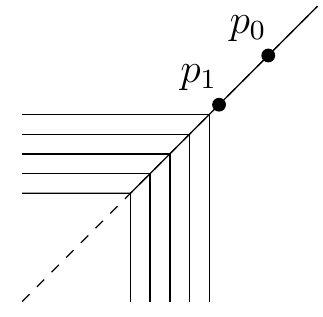}
    \caption{The infinite number of lines passing through the coaxial points $p_{0}$ and $p_{1}$}
    \label{fig:coaxial_points}
\end{figure}

\begin{definition}
Two lines are said to be coaxial if their vertices are coaxial.
\end{definition}

A recent result \cite{DR17} proves the tropical version of the Sylvester Gallai Theorem,  

\begin{theorem}[Tropical Sylvester-Gallai]
Any set of four or more points in the tropical plane determines at least one ordinary tropical line.
\end{theorem}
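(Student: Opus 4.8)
The plan is to first eliminate all degenerate configurations and then treat the generic case by a tropical analogue of Melchior's proof of the classical Sylvester--Gallai theorem, passing to the dual line arrangement via tropical projective duality.

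\emph{Reduction to general position.} Suppose two points of $\mathcal S$ are \emph{coaxial}, say they share a $y$-coordinate $c$ (a shared $x$-coordinate or a shared value of $x-y$ is handled symmetrically). Among the points of $\mathcal S$ on the horizontal line $y=c$, pick the two leftmost ones $r_1,r_2$, and put a vertex at $v=(v_1,c)$ with $v_1$ slightly larger than the first coordinate of $r_2$ (and, if further points lie on $y=c$, still smaller than the next one). Perturbing $v_1$ to avoid the finitely many first coordinates $p_1$ and the finitely many values $p_1-p_2+c$ with $p\in\mathcal S$, the resulting tropical line carries $r_1$ and $r_2$ on its west ray and no other point of $\mathcal S$, hence is ordinary. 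So from now on I assume that no two points of $\mathcal S$ are coaxial.

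\emph{Structure and dualization.} A tropical line has only three rays, and two points on the same ray are coaxial, so in general position every tropical line carries at most three points of $\mathcal S$; in particular $\mathcal S$ is never tropically collinear and each pair of points of $\mathcal S$ determines a unique tropical line. A short computation with the three rays shows that a line with vertex $v$ carries three points $a,b,c$ of $\mathcal S$ exactly when $a,b,c$ share with $v$ its second, its first, and its $(x-y)$-coordinate respectively (with the evident side conditions), so that $b_1-a_2=c_1-c_2$. By tropical projective duality, $\mathcal S$ corresponds to an arrangement $\mathcal L$ of $n$ pairwise non-coaxial tropical lines, and an ordinary tropical line through two points of $\mathcal S$ corresponds to a point of the plane lying on exactly two members of $\mathcal L$. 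Since $n\ge 4$ and $\mathcal S$ is not tropically collinear, at most three lines of $\mathcal L$ can be concurrent, so $\mathcal L$ is not a pencil. Extending $\mathcal L$ to a finite CW-decomposition of the tropical projective plane $\mathbb{TP}^2$ (a triangle), with the three ends of each line producing vertices and edges on the three boundary segments, and applying $V-E+F=1$ together with the fact that every bounded $2$-face has at least three sides, the Melchior bookkeeping yields an inequality of the shape $\sum_{k\ge 2}(k-3)\,t_k<0$, where $t_k$ is the number of interior vertices meeting exactly $k$ lines. Since $t_1=0$, this forces $t_2>0$: some vertex of $\mathcal L$ lies on exactly two lines, which dualizes to an ordinary tropical line determined by $\mathcal S$, completing the proof together with the reduction step.

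\emph{The main obstacle} is the last step, and more precisely the compactification: tropical lines run off to infinity along three \emph{distinct} boundary edges of $\mathbb{TP}^2$ rather than through a common line at infinity, so the unbounded faces and the new cells created on $\partial\mathbb{TP}^2$ contribute extra terms that must be controlled before Euler's formula delivers the Melchior inequality with the right sign. A more hands-on alternative, which already disposes of $n=4$ and of every even $n$, is the following: fix the point $p^{\ast}\in\mathcal S$ with largest first coordinate; if every pair of points had a third collinear point, then ``the third point on the tropical line through $p^{\ast}$'' would be a fixed-point-free involution of $\mathcal S\setminus\{p^{\ast}\}$, forcing $|\mathcal S|$ to be odd. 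Pushing this extremal analysis further --- the points of $\mathcal S$ extreme in $x$, in $y$, and in $x-y$ are forced into the roles $c$, $c$, $b$ (resp.\ $a$) in every collinear triple containing them --- is an alternative route to the remaining cases, at the cost of a longer case distinction.
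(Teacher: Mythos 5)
You are proving a statement that this paper does not prove at all: the Tropical Sylvester--Gallai theorem is quoted from \cite{DR17}, so there is no proof in the paper to compare against, and your attempt has to stand on its own. The parts of your argument that are worked out are sound: the reduction in the coaxial case (putting a vertex just beyond the second-leftmost point on the common axis and perturbing it away from finitely many bad values) does produce a line through exactly two points; in general position every tropical line carries at most one point of $\mathcal{S}$ per ray, hence at most three points, the dual arrangement consists of pairwise non-coaxial lines meeting pairwise in single points, and your fixed-point-free involution $q\mapsto r$ (the third point on the unique line through a fixed $p^{\ast}$ and $q$) correctly settles every even $n$.

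The gap is the non-coaxial case with $n$ odd, which you do not close. The Melchior-style route is only a sketch, and you yourself flag the compactification as ``the main obstacle''; but the problem is worse than bookkeeping at the boundary of the tropical projective plane. In a tropical line arrangement the trivalent vertex of each line is itself an interior $0$-cell of any CW-decomposition, and generically it lies on exactly one line of the arrangement, so your assertion $t_1=0$ is false: one should expect $t_1$ as large as $n$. These vertices contribute the terms $(1-3)t_1=-2t_1$ to $\sum_{k\ge 2}(k-3)t_k$ (which is now really $\sum_{k\ge 1}$), so an inequality of the shape $\sum(k-3)t_k<0$ can be satisfied by the lines' own vertices alone and no longer forces $t_2>0$; in addition the three ends of each line meet three distinct boundary edges of the triangle, so the edge--face count feeding Euler's formula is genuinely different from the classical $\mathbb{RP}^2$ situation. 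The alternative ``extremal point'' route for odd $n$ is only announced, not carried out (and the numerical constraints it yields, e.g.\ parity together with $\binom{n}{2}\equiv 0 \bmod 3$ if every crossing were triple, do not exclude cases such as $n=7$). So as it stands the theorem is established only for coaxial configurations and for even $n$; the core case, odd $n\ge 5$ in general position, remains unproven, which is precisely the case the argument of \cite{DR17} is designed to handle by a direct analysis of an extremal point rather than by an Euler-characteristic count.
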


An important observation is that if we consider a point set with no two points being coaxial, then there is a unique line passing through any two points , and therefore the point-line incidence structure in this case forms a \emph{linear space}. Hence, we can invoke the classical de-Bruijn-Erd\H os theorem to conclude that such a set of $n$ points determines at least $n$ lines.

With the existence of a Tropical Sylvester-Gallai theorem, it is quite natural to explore the possibility of a tropical version of the de-Bruijn-Erd\H os theorem, i.e., a lower bound on the number of tropical lines determined by a $n$ point set in the tropical plane. However, the number of lines determined by coaxial points are infinite in this setting. For the question of counting lines to be well posed, we would like to be in a scenario where a finite set of points determines a finite set of lines. Hence, rather than counting the number of lines as shown in the figure above, we count a special class of lines, namely \emph{stable tropical} lines.

\begin{definition}\label{thm: tropical-stable}
Consider $(L, p_{1}, \hdots, p_{n}), (n \geq 2)$ where $L$ is a tropical line with the points $(p_{1}, \hdots, p_{n})$ on the line $L$, then $(L, p_{1}, \hdots, p_{n})$, is called \emph{stable} if
\begin{enumerate}
\item either $L$ is the unique line passing through the $p_{i}$'s,  or
\item one of the points  $p_{1}, \hdots, p_{n}$ is the vertex of $L$.
\end{enumerate}
\end{definition}

\begin{figure}[H]
    \centering
    \includegraphics[scale=0.5]{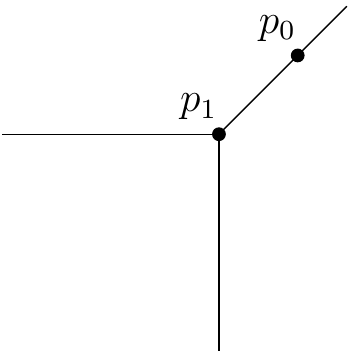}
    \caption{A stable tropical line $(L,p_{0},p_{1})$}
    \label{fig:stable_line}
\end{figure}

Now we show that this restriction on the counting of lines, turns out to be quite general as these stable lines turn out to be the tropicalization of the line passing through generic lifts of the points.

\begin{proposition}
Given two coaxial points $p_{1} = (-u,-v)$, $p_{2}=(-u',-v') \in {\mathbb{K}}^{2}$, pick lifts $P_{1} = (a_{1}t^{u}+ \hdots$ , $b_{1}t^{v} + \hdots )$ and $P_{2} = (a_{2}t^{u'} + \hdots$ , $b_{2}t^{v} + \hdots )$ over $\mathbb{K}\{\{t\}\}$. If $b_{1} \neq b_{2}$, then $trop({P_{1}P_{2}})$ is the stable tropical line through $p_{1}$ and $p_{2}$.
\end{proposition}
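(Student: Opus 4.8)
The plan is to write down the line through the two chosen lifts explicitly, tropicalize its defining polynomial via the valuation axioms, and read off the vertex of the resulting tropical line. Recall that the projective line $P_1P_2$ joining $P_1=(X_1,Y_1)$ and $P_2=(X_2,Y_2)$ over $\mathbb{K}\{\{t\}\}$ is the vanishing locus of
\[
 f \;=\; (Y_1-Y_2)\,X \;+\;(X_2-X_1)\,Y \;+\;(X_1Y_2-X_2Y_1),
\]
the expansion of the usual $3\times 3$ determinant; so $\text{trop}(P_1P_2)=V(\text{trop}(f))$ is a tropical line, and since $P_1,P_2$ lie on $V(f)$ the standard fact that the tropicalization of a point of a variety lies on the tropicalization of that variety (\cite{DS15}) already gives $p_1,p_2\in\text{trop}(P_1P_2)$. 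Hence it only remains to locate the vertex of this tropical line, which amounts to computing the valuations of the three coefficients of $f$.

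Put $m=\min(u,u')$, and note $u\ne u'$ since $p_1\ne p_2$ while their second coordinates agree. The leading term of $Y_1-Y_2$ is $(b_1-b_2)t^{v}$, which is genuinely nonzero precisely because $b_1\ne b_2$, so $\text{val}(Y_1-Y_2)=v$; and because $u\ne u'$ no cancellation of leading terms occurs in $X_2-X_1$ or in $X_1Y_2-X_2Y_1$, so $\text{val}(X_2-X_1)=m$ and $\text{val}(X_1Y_2-X_2Y_1)=m+v$. Consequently
\[
 \text{trop}(f)(w_1,w_2)\;=\;\max\{\,w_1-v,\;w_2-m,\;-m-v\,\},
\]
whose three linear pieces agree exactly at $(w_1,w_2)=(-m,-v)$, which is therefore the vertex.

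Since $-m=\max(-u,-u')$, the vertex $(-m,-v)$ is precisely whichever of $p_1,p_2$ has the larger first coordinate, and the other point sits on the horizontal $(-1,0)$-ray emanating from that vertex. Thus $\text{trop}(P_1P_2)$ passes through both $p_1$ and $p_2$ and has one of them as its vertex, so it is stable in the sense of Definition~\ref{thm: tropical-stable}(2); and since among the infinitely many tropical lines through a pair of coaxial points only this one has a point of the pair at its vertex, it is \emph{the} stable tropical line through $p_1$ and $p_2$. (The other ways in which two points can be coaxial are handled by the same computation after exchanging the roles of the coordinate directions, the genericity hypothesis changing accordingly.) The only delicate point is the bookkeeping with leading terms: the hypothesis $b_1\ne b_2$ is exactly what forces $\text{val}(Y_1-Y_2)$ to equal $v$ rather than something larger — a larger value would slide the vertex off the two points and break the conclusion — while the distinctness $u\ne u'$ furnished by coaxiality controls the other two coefficients; everything beyond that is a direct reading of $\text{trop}(f)$.
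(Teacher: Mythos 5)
Your proof is correct and takes essentially the same route as the paper: write down the defining equation of the classical line through the two lifts, compute the valuations of its coefficients (with $b_1\neq b_2$ controlling the $Y$-coefficient and $u\neq u'$ the other two), tropicalize, and observe that the vertex of the resulting tropical line sits at one of the two points, so the line is stable in the sense of the definition. The only cosmetic difference is that you use the two-point determinant form of the equation while the paper normalizes $a=1$ and solves for $b$ and $c$ by subtraction; your closing remark on the uniqueness of a stable line through a coaxial pair is a small extra justification of the definite article that the paper leaves implicit.
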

\begin{proof}

Since we assume that the two points, $p_{1}$ and $p_{2}$ are coaxial, we take $v = v'$ which would imply that the two points are coaxial in the $(-1,0)$ primitive direction.

An equation of a line in the plane is $ax + by = c$. So if the lifts $P_{1}$ and $P_{2}$ lie on this line, then they satisfy this equation

\begin{equation}
a(a_{1}t^{u}+ \hdots ) + b(b_{1}t^{v} + \hdots ) = c
\end{equation}
\begin{equation}
a(a_{2}t^{u'}+ \hdots ) + b(b_{2}t^{v} + \hdots ) = c
\end{equation}
Without loss of generality we assume $u >u'$ and $a = 1$. So subtracting the two equations gives us

\
\begin{center}

$-a_{2}t^{u'} + O(t^{u'})$ + $b((b_{1} - b_{2})t^{v} + O(t^{v})) = 0$

\

$\implies b = \frac{a_{2}t^{u'} + O(t^{u'})}{(b_{1} - b_{2})t^{v} + O(t^{v})}  $

\

$\implies c = a_{2}t^{u'} + \hdots + \frac{(a_{2}t^{u} + O(t^{u'})) \cdot (b_{2}t^{v} + \hdots )}{(b_{1} - b_{2})(t^{v}) + \hdots )} $

\
\end{center}
Therefore $val(c) = -u'$ and $val(b) = -u'-v$, and we get the following Newton polytope and the tropicalization, 

\begin{figure}[H]
    \centering
    \begin{subfigure}{.5\textwidth}
     \centering
    \includegraphics[scale=0.45]{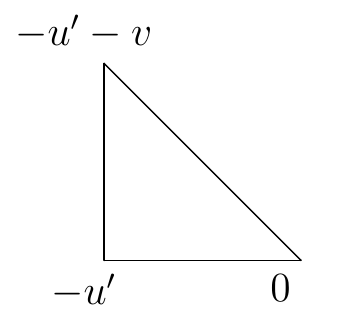}
    \label{fig:trianglestable}
    \end{subfigure}%
     \begin{subfigure}{.5\textwidth}
      \centering
     \includegraphics[scale=0.45]{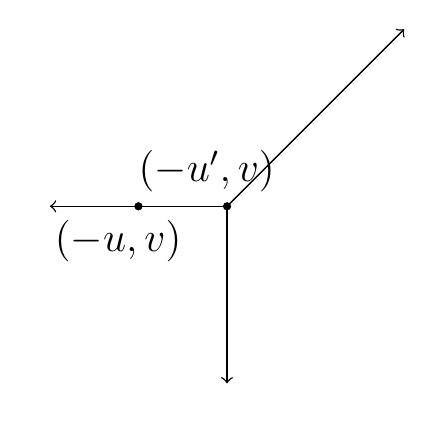}
    \label{fig:stableline}
    \end{subfigure}
    \caption{Newton polytope and tropicalization for $trop(P_{1}P_{2})$}
\end{figure}

%\begin{figure}[H]
   % \centering
  %  \includegraphics[scale=0.7]{stableline.pdf}
 %   \label{fig:stableline}
%\end{figure}

which is a stable tropical line passing through p$_{1}=(-u',-v)$ and p$_{2}=(-u,-v)$.

The result for two points being coaxial in the other two primitive directions also follows with a similar computation.
\end{proof}

Alternatively, in \cite{LT11} in Section 2.2 a notion of a stable curve though a set of $n$ points is introduced. The definition of a stable curve in \cite{LT11} is as follows

\begin{definition}
The stable curve of support $I$ passing through $\{q_{1}, \hdots ,q_{\delta-1}\}$
is the curve defined by the polynomial $f$ = “$\sum_{i \in I}$ $a_{i}x^{i_{1}} y^{i_{2}}$”, where the coordinates $a_{i}$ of $f$ are the stable solutions to the linear system imposed by passing through the points $q_{j}$ .
\end{definition}
where for a curve $H$ given by a polynomial $f$, the support is the set of tuples of $ i \in \mathbb{Z}^{n} $ such that $a_{i}$ appears in $f$, $\delta(I)$ denotes the number of elements in $I$ and the stable solution for a set of tropical linear forms is the common solution for all the linear forms, which is also stable under small perturbations of the coefficients of the linear forms \cite{LT05}\cite{BS05}.

So let us consider the above case for tropical lines and try to see the equivalent definitions of stable lines through two points according to \cite{LT11}.

The linear form that represents a tropical line in the tropical plane is given by 
\begin{equation}\label{thm: tropical-form}
a\odot x \oplus b \odot y \oplus c
\end{equation}
So the support in this case is a set of 3-tuples of $\mathbb{Z}^{3}$ and $\delta(I) = 3$. We take two arbitrary points in the (-1,0) direction of a tropical line $P_{1} = (-u,v)$ and $P_{2} = (-u',v)$, where $u$ and $u'$ both are positive and $u'$ $\le u$. Now let us compute the stable line passing through $P_{1}$ and $P_{2}$ in the setup of \cite{LT11}.

The tropical linear system obtained by plugging in the points in ~\ref{thm: tropical-form} is as follows

\[
\left\{ 
\begin{array}{c}
a\odot (-u) \oplus b \odot v \oplus c  = 0\\ 
a\odot (-u') \oplus b \odot v \oplus c  = 0\\ 
\end{array}
\right. 
\]

Now the stable solution of the above tropical linear system provides the coefficients for the linear form which defines the stable line passing through the two given points. The corresponding coefficient matrix is given as 
\begin{center}
$C = \begin{bmatrix}
  -u & v & 0\\
  -u'& v & 0 
\end{bmatrix}$.
\end{center}

With the help of explicit computations for calculating stable solutions of tropical linear systems elaborated in \cite{LT05} and \cite{BS05}, in the case above, we find that the stable solution is given by ($|O^{1}|_{t} :  |O^{2}|_{t} : |O^{3}|_{t}$) = $(v : -u' : -u' + v)$ and hence the linear form representing the stable line through P$_{1}$ and P$_{2}$ is given as 
\begin{equation}
v\odot x \oplus -u' \odot y \oplus -u'+ v    
\end{equation}

This is a tropical line with vertex $(\alpha,\beta)$ satisfying 

$\alpha + v = -u' + v \implies \alpha = -u'$ and $\beta - u' = -u' + v \implies \beta = v$.
 
\begin{figure}[H]
    \centering
    \includegraphics[scale=0.6]{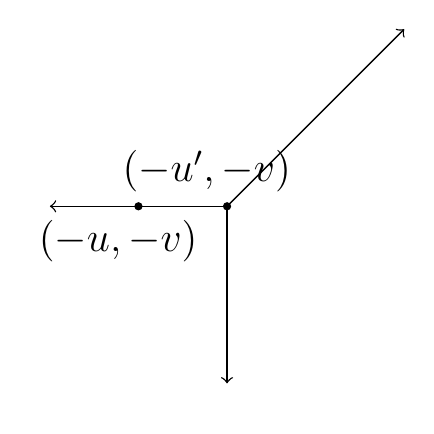}
    \caption{Stable line passing through two given points}
    \label{fig:stableline2}
\end{figure}

The computation for a two point configuration in the other two primitive directions also follows in the same manner.

So as is evident from the above discussion, taking two points on any one of the rays of a tropical line, we see that the definition of a stable line in \cite{LT11} coincides with ~\ref{thm: tropical-stable}.

An important observation here is that the Sylvester-Gallai Theorem fails if we restrict ourselves to stable tropical lines. The following figure shows explicit examples of sets of points in the tropical plane  with $n = 4$ and $5$ points such that these point sets do not determine an ordinary stable tropical line.

\begin{figure}[H]
    \centering
    \includegraphics[scale=0.7]{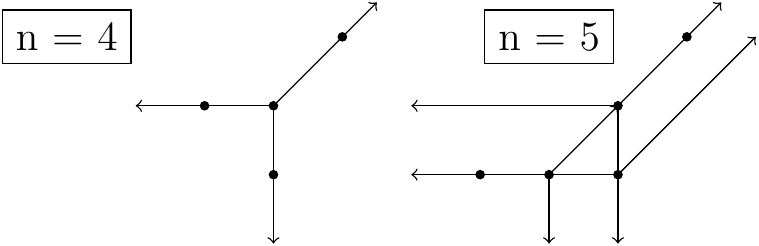}
    \caption{Point sets which do not determine an ordinary stable tropical line}
    \label{fig:my_label}
\end{figure}

\begin{proposition}\label{thm: duality}
Given a $n$-point set $P$ in the tropical plane, the number of stable lines determined by $P$ is equal to the number of stable intersections obtained in the corresponding dual line arrangement.
\end{proposition}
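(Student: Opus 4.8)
The plan is to exploit the projective duality $\phi$ described earlier, which sends a point $P$ to the tropical line with vertex $-P$ and is incidence-preserving, together with the duality between a tropical line arrangement and its Newton subdivision. The key is to match up the two notions of ``stability'': a stable tropical line through a subset of points of $P$ on one side, and a stable intersection of the dual lines on the other side. First I would set up the bijection at the level of a single pair: given two points $p_i, p_j \in P$, the stable line through them (Definition~\ref{thm: tropical-stable}) corresponds under $\phi$ to the stable intersection of the dual lines $\phi(p_i)$ and $\phi(p_j)$. This is exactly the content already established in the two preceding discussions — the Proposition on generic lifts shows the stable line is $\mathrm{trop}(P_1P_2)$, and the stable-intersection computation via \cite{LT05,BS05} produces the dual vertex $(\alpha,\beta) = (-u',v)$; one checks these are negatives/duals of one another. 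So the correspondence $\{$pair of points$\} \leftrightarrow \{$pair of dual lines$\}$ already carries ``stable line through the pair'' to ``stable intersection of the pair''.

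Next I would promote this from pairs to the actual counting. Two distinct pairs of points may determine the \emph{same} stable line (when three or more points are collinear/coaxial on it), and correspondingly the dual lines will meet in the same stable intersection point; so the map descends to a well-defined map from the set of stable lines determined by $P$ to the set of stable intersections of $\phi(P)$. I would argue injectivity: if two stable lines $L, L'$ determined by $P$ are distinct, then they have distinct defining data (vertex plus the three ray directions are rigid once the stability condition is imposed), hence the associated stable intersection points in the dual arrangement differ — here I would use that $\phi$ is a bijection on the ambient plane and incidence-preserving, so distinct lines through common points dualize to distinct intersection points of common dual lines. For surjectivity, every stable intersection of $\phi(P)$ is the stable intersection of \emph{some} pair $\phi(p_i), \phi(p_j)$ of the dual lines (stable intersection is a binary operation, and the arrangement's stable intersections are precisely the pairwise ones that survive perturbation), and pulling back through $\phi$ this pair of lines comes from a pair of points $p_i, p_j \in P$ whose stable line maps to it. Finally, I would invoke the classification stated just before Section~4 — stable intersections of first kind dualize to parallelograms and hexagons, of second kind to the irregular $4$-, $5$-, $6$-gons in the Newton subdivision — to make the count of stable intersections concrete and unambiguous, though strictly this is only needed to connect with the subdivision picture used later, not for the equality itself.

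The main obstacle I anticipate is the well-definedness and injectivity across the many-to-one collapse: one must rule out the pathology where two genuinely different stable lines of $P$ happen to yield the same stable intersection point of the dual arrangement, and conversely where a single stable line is ``seen'' by the dual arrangement in more than one way. Handling this cleanly requires a careful statement of what data a stable tropical line carries — vertex location together with which of its three rays the relevant points sit on — and matching that bijectively with the local combinatorics (first vs.\ second kind) of the dual stable intersection. The coaxial cases are where this is most delicate, since that is exactly where stability is doing nontrivial work on both sides; I would treat the three primitive directions $(-1,0)$, $(0,-1)$, $(1,1)$ by the same symmetry argument already used in the two earlier propositions, so no new computation is needed, only bookkeeping.
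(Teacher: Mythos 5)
Your proposal is correct and follows essentially the same route as the paper: both rest on the projective duality $\phi$ and the observation that the vertex of a stable tropical line, reflected through the origin, is precisely a stable intersection of the dual arrangement, with the two clauses of the stability definition matching stable intersections of first and second kind. The paper simply phrases this as a direct case analysis on the stable line's defining condition rather than passing through pairs of points and an explicit injectivity/surjectivity check, but the underlying correspondence is identical.
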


\begin{figure}[H]
\begin{tikzpicture}[scale=0.5]
   \tkzInit[xmax=4,ymax=4,xmin=-4,ymin=-4]
   \tkzAxeXY
   \draw[thick,latex-latex][<-] (-4,2) -- (2,2) node[anchor=south west] {}; 
   \draw[thick,latex-latex][->] (2,2) -- (2,-4) node[anchor=south west] {};
   \draw[thick,latex-latex][->] (2,2) -- (4,4) node[anchor=south west] {}; 
   \draw[dashed,latex-latex][<-] (-3,-4) -- (-3,-3) node[anchor=south west] {}; 
   \draw[dashed,latex-latex][->] (-3,-3) -- (-4,-3) node[anchor=south west] {}; 
   \draw[dashed,latex-latex][->] (-3,-3) -- (2,2) node[anchor=south west] {}; 
   \draw[dashed,latex-latex][->] (-1,-2) -- (-4,-2) node[anchor=south west] {};
   \draw[dashed,latex-latex][->] (-1,-2) -- (-1,-4) node[anchor=south west] {};
   \draw[dashed,latex-latex][->] (-1,-2) -- (4,3) node[anchor=south west] {};
   \draw[red,thick,dashed] (-2,-2) circle (0.2cm);
   \fill[black]  (1,2) circle (.1cm)  node[anchor=south west] {};
   \fill[black]  (3,3) circle (.1cm)  node[anchor=south west] {};
   \fill[black]  (-1,-2) circle (.07cm)  node[anchor=south west] {};
   \fill[black]  (-3,-3) circle (.07cm)  node[anchor=south west] {};
  \end{tikzpicture}
  \hspace{2cm}
  \begin{tikzpicture}[scale=0.5]
   \tkzInit[xmax=4,ymax=4,xmin=-4,ymin=-4]
   \tkzAxeXY
   \draw[thick][<-] (1.5,2) -- (4,2);
   \draw[thick,latex-latex][->] (3,2) -- (3,0) node[anchor=south west] {}; 
   \draw[thick,latex-latex][->] (3,2) -- (4,3) node[anchor=south west] {}; 
   \draw[thick,latex-latex][->] (4,2) -- (4,0) node[anchor=south west] {}; 
   \draw[thick,latex-latex][->] (4,2) -- (5,3) node[anchor=south west] {};
   \draw[dashed,latex-latex][->] (-3,-2) -- (-3,-4) node[anchor=south west] {}; 
   \draw[dashed,latex-latex][->] (-3,-2) -- (-2,-1) node[anchor=south west] {}; 
   \draw[dashed,latex-latex][->] (-4,-2) -- (-4,-4) node[anchor=south west] {}; 
   \draw[dashed,latex-latex][->] (-4,-2) -- (-3,-1) node[anchor=south west] {}; 
   \draw[dashed,latex-latex][<-] (-4.5,-2) -- (-2,-2) node[anchor=south west] {};
   \draw[dashed,latex-latex][->] (-2,-2) -- (-1,-1) node[anchor=south west] {};
   \draw[dashed,latex-latex][->] (-2,-2) -- (-2,-4) node[anchor=south west] {}; 
   \draw[red,thick,dashed] (-3,-2) circle (0.2cm);
   \draw[red,thick,dashed] (-4,-2) circle (0.2cm);
   \fill[black]  (2,2) circle (.1cm)  node[anchor=south west] {};
   \fill[black]  (3,2) circle (.1cm)  node[anchor=south west] {};
   \fill[black]  (4,2) circle (.1cm)  node[anchor=south west] {};
  \end{tikzpicture}
  \caption{Duality between stable lines and stable intersections}
  \label{fig:duality_of_lines}
\end{figure}
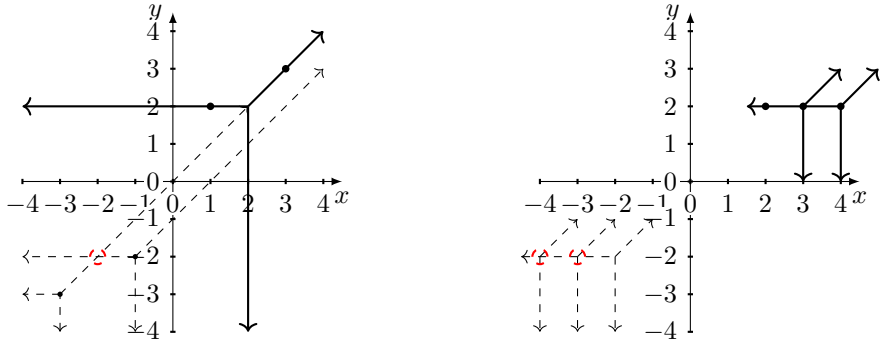

\begin{proof}
Consider an arbitrary stable tropical line $(L,p_{1}, p_{2} \hdots p_{n})$, then by definition either the points $p_{1}, p_{2} \hdots p_{n}$ uniquely determine $L$ or one of the points amongst the $p_{i}'s$ is the vertex of the line $L$. We first consider the case when the points $p_{1}, p_{2} \hdots p_{n}$ determine the line uniquely, and in this case there must be at least two non coaxial points present on the line $L$, and we realize  that under duality, the reflection of the vertex of the line $L$ with respect to the origin corresponds to a unique stable intersection obtained in the dual line arrangement, illustrated in the Figure \ref{fig:duality_of_lines}. This implies a one to one correspondence between stable lines determined by such points and the stable intersections obtained in the dual line arrangement.

Also, if one of the points amongst the $p_{i}'s$ is the vertex of the stable tropical line $L$, then we again oberve that the reflection of the the vertex of the line $L$ with respect to the origin, corresponds to a unique stable intersection in the dual line arrangement, illustrated in the Figure \ref{fig:duality_of_lines}. Hence, we see a one to one correspondence between stable tropical lines and the number of stable intersections in the dual line arrangement.

%\begin{figure}[H]
%\begin{tikzpicture}[scale=0.45]
%   \tkzInit[xmax=4,ymax=4,xmin=-4,ymin=-4]
%   \tkzAxeXY
%   \draw[thick][<-] (1.5,2) -- (4,2);
%   \draw[thick,latex-latex][->] (3,2) -- (3,0) node[anchor=south west] {}; 
%   \draw[thick,latex-latex][->] (3,2) -- (4,3) node[anchor=south west] {}; 
%   \draw[thick,latex-latex][->] (4,2) -- (4,0) node[anchor=south west] {}; 
%   \draw[thick,latex-latex][->] (4,2) -- (5,3) node[anchor=south west] {};
%   \draw[dashed,latex-latex][->] (-3,-2) -- (-3,-4) node[anchor=south west] {}; 
%   \draw[dashed,latex-latex][->] (-3,-2) -- (-2,-1) node[anchor=south west] {}; 
%   \draw[dashed,latex-latex][->] (-4,-2) -- (-4,-4) node[anchor=south west] {}; 
%   \draw[dashed,latex-latex][->] (-4,-2) -- (-3,-1) node[anchor=south west] {}; 
%   \draw[dashed,latex-latex][<-] (-4.5,-2) -- (-2,-2) node[anchor=south west] {};
%   \draw[dashed,latex-latex][->] (-2,-2) -- (-1,-1) node[anchor=south west] {};
%   \draw[dashed,latex-latex][->] (-2,-2) -- (-2,-4) node[anchor=south west] {}; 
%   \draw[red,thick,dashed] (-3,-2) circle (0.2cm);
%   \draw[red,thick,dashed] (-4,-2) circle (0.2cm);
%   \fill[black]  (2,2) circle (.1cm)  node[anchor=south west] {};
%   \fill[black]  (3,2) circle (.1cm)  node[anchor=south west] {};
%   \fill[black]  (4,2) circle (.1cm)  node[anchor=south west] {};
%  \end{tikzpicture}
%  \caption{Duality between stable lines and stable intersections}
%  \label{fig:duality_of_lines_2}
%\end{figure}

We realize that this duality between stable intersections and stable lines is a bit stronger; if the stable line is the unique line passing through the points on it, then the vertex of the line corresponds to a stable intersection of first kind and if the stable line has one of the points as a vertex, then the vertex corresponds to a stable intersection of second kind.

\end{proof}

This result illustrates the fact that stable tropical lines are in fact dual to stable intersections of tropical lines.

The above result leads on to the following corollary.

\begin{corollary}
For a given tropical line arrangement $\mathcal{L}$ in the tropical plane, the number of stable intersections equals the number of non-triangular faces in the dual Newton subdivision corresponding to the tropical line arrangement. 
\end{corollary}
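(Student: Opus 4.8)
The plan is to work through the standard bijection between the two-dimensional cells of the dual Newton subdivision of $\mathcal{L}$ and the vertices (zero-cells) of the arrangement, and to detect which of those vertices are stable intersections purely from the combinatorial type of their dual cell. Write $\mathcal{L}=\{l_{1},\dots,l_{n}\}$ with $l_{i}=V(f_{i})$, and let $\Delta=\operatorname{conv}\{(0,0),(1,0),(0,1)\}$ be the Newton polygon of a single tropical line. Since the Newton polygon of $f=f_{1}\odot\cdots\odot f_{n}$ is the Minkowski sum $\Delta+\cdots+\Delta=n\Delta$, the subdivision dual to $V(f)=l_{1}\cup\cdots\cup l_{n}$ is the associated mixed subdivision of $n\Delta$: the cell dual to a vertex $v$ of the arrangement is $C(v)=\Delta_{1}(v)+\cdots+\Delta_{n}(v)$, where $\Delta_{i}(v)$ is the face of the trivial subdivision of $\Delta$ dual to the cell of $l_{i}$ containing $v$ — so $\Delta_{i}(v)$ is a vertex of $\Delta$ when $v$ lies in the interior of one of the three complementary regions of $l_{i}$, an edge of $\Delta$ when $v$ lies in the relative interior of a ray of $l_{i}$, and all of $\Delta$ when $v$ is the vertex of $l_{i}$. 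I would cite \cite[Chapter~3]{DS15}, \cite{MJ00} and \cite[Section~2.3]{DR17} for this Minkowski-sum description.

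With this in hand I would classify the shape of $C(v)$. Two distinct tropical lines cannot share a vertex — the three rays of any tropical line point in the primitive directions $(-1,0),(0,-1),(1,1)$, so equal vertices force $l_{i}=l_{j}$ — hence at most one summand $\Delta_{i}(v)$ equals $\Delta$. A short Minkowski-sum computation then yields a dichotomy: if exactly one summand is $\Delta$ and no summand is an edge, then $C(v)$ is a translate of $\Delta$, a unit triangle; otherwise (the only remaining possibility for an honest two-cell) at least one summand is an edge of $\Delta$, and then $C(v)$ has at least four edges — a parallelogram or hexagon when no summand is $\Delta$, and an irregular quadrilateral, pentagon or hexagon when one summand is $\Delta$ — so it is never a triangle. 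Thus the triangular two-cells are exactly those dual to a vertex $v$ that is the vertex of a unique line of $\mathcal{L}$ and lies on no other line; such a $v$ is not a stable intersection, since a stable intersection of two lines lies in their set-theoretic intersection. Conversely, if $C(v)$ is not a triangle, then either $v$ lies on at least two lines none of which has its vertex there — a stable intersection of the first kind, since transverse intersections of small perturbations of those lines converge to $v$ — or $v$ is the vertex of one line through which a further line passes — a stable intersection of the second kind. Combining the two implications, the non-triangular two-cells of the dual subdivision correspond bijectively to the stable intersections of $\mathcal{L}$, whence their numbers agree. (This also makes exhaustive the correspondence recorded just before the corollary and dovetails with Proposition~\ref{thm: duality}.)

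The step I expect to be the main obstacle is making the local picture airtight when several lines of $\mathcal{L}$ run through $v$ along a common ray: then several of the summands $\Delta_{i}(v)$ coincide with the same edge of $\Delta$, the corresponding edge of the arrangement carries higher weight, and one must check both that $C(v)$ remains non-triangular (an elongated parallelogram, trapezoid or hexagon) and that $v$ is still genuinely realised as the stable intersection of some pair of lines, i.e.\ that the perturbation argument still delivers $v$ as the limit point. Establishing the Minkowski-sum formula for $C(v)$ carefully, and verifying that the five non-triangular cell types really do exhaust what can occur, are the other points that demand care; the remaining enumeration and the matching to first/second kind are routine and largely already present in the preceding discussion.
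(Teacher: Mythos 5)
Your argument is correct and follows essentially the same route as the paper: both rest on the duality between vertices of the arrangement and two-cells of the Newton subdivision, with triangles corresponding exactly to vertices of a single line lying on no other line (hence not stable intersections) and all other two-cells, having at least four edges, corresponding to stable intersections of first or second kind. Your Minkowski-sum (mixed-cell) description of the dual cell $C(v)$ simply makes explicit, and slightly more rigorous, the ray-counting step that the paper states directly, including the coaxial case where edge summands coincide.
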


\begin{proof}
Since all stable intersections are obtained as intersections of two or more rays, each point of intersection has at least four or more rays emanating from it in the primitive directions. This corresponds through duality to faces with at least four edges or more and the only other faces which contribute in the dual Newton subdivision are triangular faces which are not stable intersections. Hence, the number of stable intersections in the line arrangement is equal to the number of non-triangular faces in the dual Newton subdivision.
\end{proof}

\begin{figure}[H]
    \centering
    \includegraphics[scale=0.7]{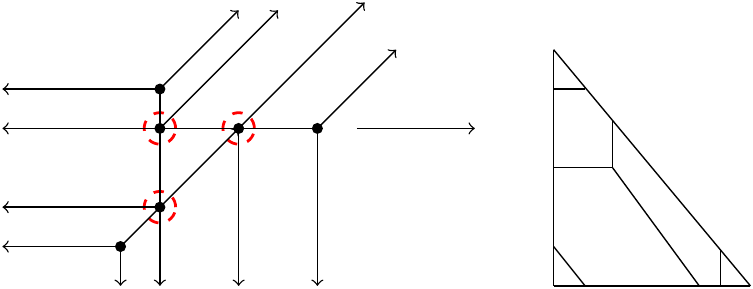}
    \caption{An example demonstrating duality between stable intersections and cells in the Newton subdivision}
    \label{fig:dualityofcells}
\end{figure}

With this duality established, let us look at the total number of faces, which we denote as $t$, present in a dual Newton subdivision of a tropical line arrangement of $n$ tropical lines, where $n$ remains fixed for our discussion. Firstly, there is a trivial lower bound of $n$ on $t$, since the $n$ vertices of the tropical lines contribute at least $n$ faces in the corresponding Newton subdivision. Also $t$ is bounded above by the number $\binom{n}{2} + n$, which is the number of faces when any two lines in the line arrangement intersect transversally at a unique point \cite{DR17}. Therefore, $t$ satisfies the following inequality 

\[  n \leq t \leq \binom{n}{2} + n \]

We recall that stable intersection of first kind correspond to parallelograms and hexagons in the dual Newton subdivision and stable intersections of second kind correspond to irregular cells with four, five or six edges in the dual Newton subdivision. A common description of the all the faces appearing in a dual Newton subdivision is described in the Figure \ref{fig:cell_shape} also present in \cite{DR17},

\begin{figure}[H]
\begin{center}
\begin{tikzpicture}[scale=0.5]
\draw[] (0,0) -- (2,0);
\draw[] (0,0) -- (0,-1.5);
\draw[] (0,-1.5) -- (1.5,-3);
\draw[] (2,0) -- (4,-2);
\draw[] (4,-2) -- (4,-3);
\draw[] (1.5,-3) -- (4,-3);
\fill[black] (1,0) circle (.0000001cm) node[align=left,   above]{$w_{3}\quad$};
\fill[black] (-0.8,-1.1) circle (.0000001cm) node[align=left,   above]{$w_{2} + c\quad$};
\fill[black] (0.55,-2.75) circle (.0000001cm) node[align=left,   above]{$w_{1}\quad$};
\fill[black] (2.9,-3.85) circle (.0000001cm) node[align=left,   above]{$w_{3} + c\quad$};
\fill[black] (4.85,-2.8) circle (.0000001cm) node[align=left,   above]{$w_{2}\quad$};
\fill[black] (4.25,-1.2) circle (.0000001cm) node[align=left,   above]{$w_{1} + c\quad$};
\end{tikzpicture}
\caption{ A cell in the Newton Subdivision, which is dual to a tropical line arrangement \cite{DR17}}
\label{fig:cell_shape}
\end{center}
\end{figure}
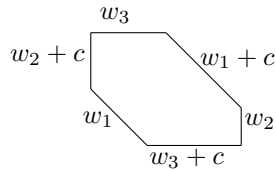

where $w_{1},w_{2}$ and $w_{3}$ are the number of lines, which are coaxial in the three primitive directions, and $c$ represents the number of lines centered at the point dual to the face in the tropical line arrangement. A Newton subdivision with faces of the shape described in Figure \ref{fig:cell_shape}, is called a \emph{linear Newton subdivision} and if the only faces occurring in a linear Newton subdivision are triangles, parallelograms and hexagons, then such a subdivision is called a \emph{semiuniform subdivision} \cite{DR17}.

We refer to faces in the shapes of parallelograms and hexagons as \textbf{semiuniform} faces and faces dual to stable intersections of second kind as \textbf{non-uniform} faces.

Figure~\ref{fig:typesoffaces} shows all the possible shapes of cells present in the dual Newton subdivision of a tropical line arrangement; in the figure for all semiuniform faces, for each edge length parameter we consider $w_{i} = 1$ and for all non-uniform faces we take $w_{i} = c = 1$. For higher values of $w_{i}'s$ and $c$ the shapes remain the same however the edge lengths corresponding to each parameter get elongated according to the values described in Figure \ref{fig:cell_shape}.

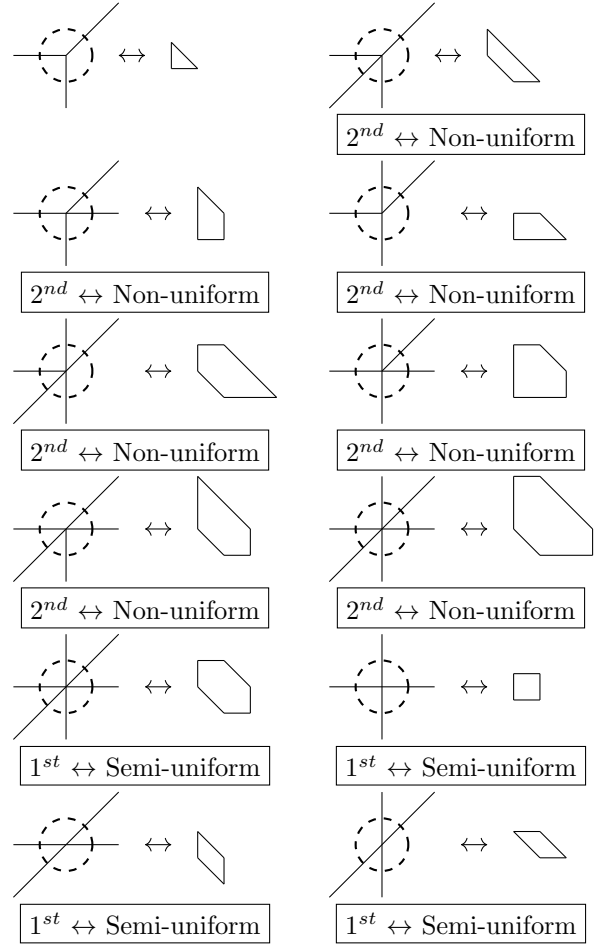
\begin{figure}
\begin{center}
\begin{tikzpicture}[scale=0.35]
\draw[] (0,0) -- (-2,0);
\draw[] (0,0) -- (0,-2);
\draw[] (0,0) -- (2,2);
\draw[thick,dashed] (0,0) circle (1cm);
\draw[<->] (2,0) -- (3,0);
\draw[] (4,0.5) -- (4,-0.5);
\draw[] (4,-0.5) -- (5,-0.5);
\draw[] (5,-0.5) -- (4,0.5);
\draw[] (12,0) -- (10,0);
\draw[] (12,0) -- (12,-2);
\draw[] (12,0) -- (14,2);
\draw[] (12,0) -- (10,-2);
\draw[<->] (14,0) -- (15,0);
\draw[] (16,0) -- (16,1);
\draw[] (16,0) -- (17,-1);
\draw[] (17,-1) -- (18,-1);
\draw[] (16,1) -- (18,-1);
\draw[thick,dashed] (12,0) circle (1cm);
\node[draw] at (15,-3) {$2^{nd}$ $\leftrightarrow$ Non-uniform};
\draw[] (0,-6) -- (-2,-6);
\draw[] (0,-6) -- (0,-8);
\draw[] (0,-6) -- (2,-4);
\draw[] (0,-6) -- (2,-6);
\draw[<->] (3,-6) -- (4,-6);
\draw[] (5,-7) -- (5,-5);
\draw[] (6,-6) -- (5,-5);
\draw[] (6,-6) -- (6,-7);
\draw[] (5,-7) -- (6,-7);
\draw[thick,dashed] (0,-6) circle (1cm);
\node[draw] at (3,-9) {$2^{nd}$ $\leftrightarrow$ Non-uniform};
\draw[] (12,-6) -- (10,-6);
\draw[] (12,-6) -- (12,-8);
\draw[] (12,-6) -- (14,-4);
\draw[] (12,-6) -- (12,-4);
\draw[<->] (15,-6) -- (16,-6);
\draw[] (17,-6) -- (17,-7);
\draw[] (17,-7) -- (19,-7);
\draw[] (17,-6) -- (18,-6);
\draw[] (18,-6) -- (19,-7);
\draw[thick,dashed] (12,-6) circle (1cm);
\node[draw] at (15,-9) {$2^{nd}$ $\leftrightarrow$ Non-uniform};
\draw[] (0,-12) -- (-2,-12);
\draw[] (0,-12) -- (0,-14);
\draw[] (0,-12) -- (2,-10);
\draw[] (0,-12) -- (0,-10);
\draw[] (0,-12) -- (-2,-14);
\draw[<->] (3,-12) -- (4,-12);
\draw[thick,dashed] (0,-12) circle (1cm);
\draw[] (5,-12) -- (5,-11);
\draw[] (5,-11) -- (6,-11);
\draw[] (6,-11) -- (8,-13);
\draw[] (8,-13) -- (6,-13);
\draw[] (6,-13) -- (5,-12);
\node[draw] at (3,-15) {$2^{nd}$ $\leftrightarrow$ Non-uniform};
\draw[] (12,-12) -- (10,-12);
\draw[] (12,-12) -- (12,-14);
\draw[] (12,-12) -- (14,-10);
\draw[] (12,-12) -- (12,-10);
\draw[] (12,-12) -- (14,-12);
\draw[<->] (15,-12) -- (16,-12);
\draw[thick,dashed] (12,-12) circle (1cm);
\draw[] (17,-11) -- (17,-13);
\draw[] (17,-13) -- (19,-13);
\draw[] (19,-13) -- (19,-12);
\draw[] (19,-12) -- (18,-11);
\draw[] (18,-11) -- (17,-11);
\node[draw] at (15,-15) {$2^{nd}$ $\leftrightarrow$ Non-uniform};
\draw[] (0,-18) -- (-2,-18);
\draw[] (0,-18) -- (0,-20);
\draw[] (0,-18) -- (2,-16);
\draw[] (0,-18) -- (2,-18);
\draw[] (0,-18) -- (-2,-20);
\draw[<->] (3,-18) -- (4,-18);
\draw[thick,dashed] (0,-18) circle (1cm);
\draw[] (5,-18) -- (5,-16);
\draw[] (5,-18) -- (6,-19);
\draw[] (6,-19) -- (7,-19);
\draw[] (7,-18) -- (7,-19);
\draw[] (7,-18) -- (5,-16);
\node[draw] at (3,-21) {$2^{nd}$ $\leftrightarrow$ Non-uniform};
\draw[] (12,-18) -- (10,-18);
\draw[] (12,-18) -- (12,-20);
\draw[] (12,-18) -- (14,-16);
\draw[] (12,-18) -- (12,-16);
\draw[] (12,-18) -- (14,-18);
\draw[] (12,-18) -- (10,-20);
\draw[<->] (15,-18) -- (16,-18);
\draw[thick,dashed] (12,-18) circle (1cm);
\draw[] (17,-18) -- (17,-16);
\draw[] (17,-16) -- (18,-16);
\draw[] (18,-16) -- (20,-18);
\draw[] (20,-18) -- (20,-19);
\draw[] (20,-19) -- (18,-19);
\draw[] (18,-19) -- (17,-18);
\node[draw] at (15,-21) {$2^{nd}$ $\leftrightarrow$ Non-uniform};
\draw[] (-2,-24) -- (2,-24);
\draw[] (2,-22) -- (-2,-26);
\draw[] (0,-22) -- (0,-26);
\draw[<->] (3,-24) -- (4,-24);
\draw[thick,dashed] (0,-24) circle (1cm);
\draw[] (5,-24) -- (6,-25);
\draw[] (6,-25) -- (7,-25);
\draw[] (7,-25) -- (7,-24);
\draw[] (7,-24) -- (6,-23);
\draw[] (5,-23) -- (6,-23);
\draw[] (5,-24) -- (5,-23);
\node[draw] at (3,-27) {$1^{st}$ $\leftrightarrow$ Semi-uniform};
\draw[] (10,-24) -- (14,-24);
\draw[] (12,-22) -- (12,-26);
\draw[<->] (15,-24) -- (16,-24);
\draw[thick,dashed] (12,-24) circle (1cm);
\draw[] (17,-24.5) -- (17,-23.5);
\draw[] (17,-23.5) -- (18,-23.5);
\draw[] (18,-23.5) -- (18,-24.5);
\draw[] (17,-24.5) -- (18,-24.5);
\node[draw] at (15,-27) {$1^{st}$ $\leftrightarrow$ Semi-uniform};
\draw[] (-2,-30) -- (2,-30);
\draw[] (-2,-32) -- (2,-28);
\draw[<->] (3,-30) -- (4,-30);
\draw[thick,dashed] (0,-30) circle (1cm);
\draw[] (5,-29.5) -- (5,-30.5);
\draw[] (5,-30.5) -- (6,-31.5);
\draw[] (6,-31.5) -- (6,-30.5);
\draw[] (6,-30.5) -- (5,-29.5);
\node[draw] at (3,-33) {$1^{st}$ $\leftrightarrow$ Semi-uniform};
\draw[] (12,-28) -- (12,-32);
\draw[] (10,-32) -- (14,-28);
\draw[<->] (15,-30) -- (16,-30);
\draw[thick,dashed] (12,-30) circle (1cm);
\draw[] (17,-29.5) -- (18,-29.5);
\draw[] (17,-29.5) -- (18,-30.5);
\draw[] (18,-29.5) -- (19,-30.5);
\draw[] (18,-30.5) -- (19,-30.5);
\node[draw] at (15,-33) {$1^{st}$ $\leftrightarrow$ Semi-uniform};
\end{tikzpicture}
\caption{All possible shapes of faces present in the Newton subdivision of a tropical line arrangement; with the labelings having type of stable intersections on the left along with the type of face that corresponds to it on the right}
\label{fig:typesoffaces}
\end{center}
\end{figure}

%\begin{figure}
    %\centering
    %\includegraphics[scale=0.8]{}
    %\caption{All possible shapes of faces present in the Newton subdivision of a tropical line arrangement.}
    %\label{fig:typesoffaces}
%\end{figure}

We move on to discuss one of the extremal cases for the values of $t$, which is the case when $t = n$.

\begin{lemma}\label{lem:lemma1}
Let $\mathcal{L}$ be a tropical line arrangement of n lines, having exactly n faces in the corresponding dual Newton subdivision, then $\mathcal{L}$ has no stable intersections of first kind in the tropical line arrangement.
\end{lemma}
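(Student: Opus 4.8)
The plan is to combine two elementary observations: that a tropical line is determined by its vertex, and the dictionary between the shape of a face of the dual Newton subdivision and its parameters $(c,w_{1},w_{2},w_{3})$ displayed in Figure~\ref{fig:cell_shape}.

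First I would make the source of the bound $t\ge n$ precise. Since a tropical line is the union of three half-rays emanating from a single point, two tropical lines sharing that point coincide as subsets of $\mathbb{R}^{2}$; hence the $n$ distinct lines $l_{1},\dots,l_{n}$ of $\mathcal{L}$ have $n$ pairwise distinct vertices $v_{1},\dots,v_{n}$. Under the duality between $\mathcal{L}$ and its dual Newton subdivision each $v_{i}$ is dual to a face $\Phi_{i}$, and distinct vertices yield distinct faces, so $\Phi_{1},\dots,\Phi_{n}$ are $n$ distinct faces of the subdivision.

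Next I would determine which shapes the $\Phi_{i}$ can have. The point dual to $\Phi_{i}$ carries the vertex of $l_{i}$ and, because the $v_{j}$ are distinct, of no other line of $\mathcal{L}$; hence in the notation of Figure~\ref{fig:cell_shape} the face $\Phi_{i}$ has $c=1$. According to whether $0$, $1$, $2$, or $3$ of the $w_{j}$ are positive, $\Phi_{i}$ is therefore a (unit) triangle or a non-uniform cell with four, five, or six edges. In particular $\Phi_{i}$ is never a parallelogram or a hexagon, since these are precisely the faces with $c=0$.

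Finally I would conclude by contradiction. Assume $\mathcal{L}$ has a stable intersection of first kind. By the correspondence recalled just before the statement, stable intersections of first kind are dual to parallelograms and hexagons, so the dual subdivision contains such a face $\Psi$. By the previous step $\Psi\notin\{\Phi_{1},\dots,\Phi_{n}\}$, so $\Phi_{1},\dots,\Phi_{n},\Psi$ are $n+1$ distinct faces and $t\ge n+1$, contradicting $t=n$. Hence $\mathcal{L}$ has no stable intersection of first kind. The only point that genuinely needs justification — and the one I would isolate as the heart of the argument — is that distinct lines force distinct vertices and hence distinct dual faces, which is what guarantees the $\Phi_{i}$ are actually $n$ different faces; everything else is bookkeeping with the face-shape dictionary of Figure~\ref{fig:cell_shape}.
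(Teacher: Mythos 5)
Your argument is correct and is essentially the paper's own proof: assume a stable intersection of first kind exists, note the $n$ line vertices already account for $n$ distinct faces of the dual Newton subdivision, and observe that the parallelogram or hexagon dual to the first-kind intersection is none of these, forcing $t\ge n+1$ and a contradiction. The only difference is that you spell out the bookkeeping the paper leaves implicit (distinct lines have distinct vertices, hence distinct dual faces, and a vertex face has $c\ge 1$ while a first-kind face has $c=0$), which is a welcome but not substantively different refinement.
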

\begin{proof}
We start with a tropical line arrangement $\mathcal{L}$ of $n$ tropical lines, such that it has exactly $n$ faces. We continue by contradiction, assuming that there does exist a stable intersection of first kind in the line arrangement $\mathcal{L}$. However, since there are at least $n$ faces contributed by the $n$ vertices of the $n$ tropical lines, and the face corresponding to a stable intersection of first kind is not one of them, therefore this would imply that the total number of faces in the dual Newton subdivision corresponding to $\mathcal{L}$ has at least $n + 1$  faces, which is a contradiction to the fact that $\mathcal{L}$ has $n$ faces in the dual Newton subdivision. Hence, the proof.
\end{proof}

We look at an example of a $n$ line arrangement with exactly $n$ faces. 

\begin{center}
\begin{figure}[H]
    \centering
    \includegraphics[scale=0.6]{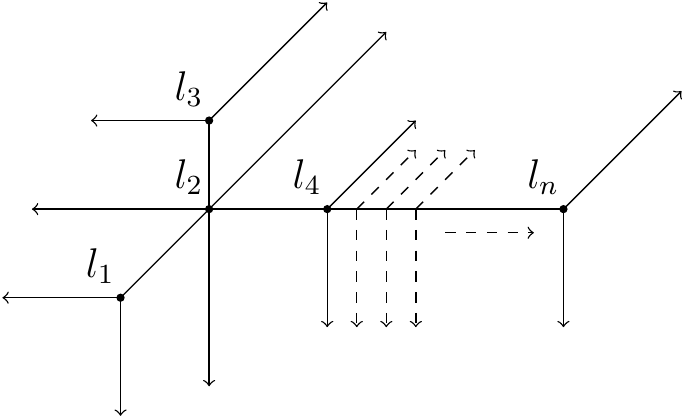}
    \caption{An example of a line arrangement with exactly $n$ faces and three triangular faces}
    \label{fig:extremal}
\end{figure}
\end{center}

The example depicted in Figure \ref{fig:extremal} shows a tropical line arrangement of $n$ tropical lines $\{l_{1},l_{2},l_{3},l_{4}, \hdots,l_{n}\}$, such that the total number of faces in the corresponding Newton subdivision is $n$, and it has exactly three triangular faces located at the corners of the Newton polygon.

We use $({v}{^{max}})_{t}$ to represent the maximum number of triangular faces present in a Newton subdivision corresponding to a tropical line arrangement with total number of faces in the Newton subdivision being equal to $t$.
 
\begin{lemma}\label{lem:lemma2}
Let $\mathcal{L}$ be a tropical line arrangement of $n$ tropical lines such that the dual Newton subdivision $\mathcal{N}$ has exactly $n$ faces, then the maximum number of triangular faces in $\mathcal{N}$ is $3$, i.e., $({v}{^{max}})_{n} = 3$ .
\end{lemma}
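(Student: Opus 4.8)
The plan is to work entirely inside the dual Newton subdivision $\mathcal{N}$ and to pin down exactly where triangular cells can sit. First, Lemma~\ref{lem:lemma1} tells us that $\mathcal{L}$ has no stable intersections of first kind, so by the classification recalled before Figure~\ref{fig:cell_shape} the subdivision $\mathcal{N}$ contains neither parallelograms nor hexagons; equivalently, in the parametrisation of Figure~\ref{fig:cell_shape} every cell $F$ satisfies $c_F\ge 1$. Since each of the $n$ line-vertices of $\mathcal{L}$ lies in a unique cell and a cell with parameter $c$ accounts for exactly $c$ of them, we get $\sum_F c_F=n$; as $\mathcal{N}$ has exactly $n$ cells, each with $c_F\ge 1$, this forces $c_F=1$ for every cell. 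Hence a cell of $\mathcal{N}$ is a triangle precisely when $w_1(F)=w_2(F)=w_3(F)=0$, which by the projective duality of Section~4 means the vertex of the corresponding line is met by no other line; call such a line \emph{free}. So $(v^{max})_n$ is the maximal number of free lines in an $n$-line arrangement whose dual subdivision has $n$ cells.

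Next I record that the Newton polygon is $n\Delta:=\mathrm{conv}\{(0,0),(n,0),(0,n)\}$ and that each of its three corners lies in exactly one cell of $\mathcal{N}$: the corner $(0,0)$ (respectively $(n,0)$, $(0,n)$) is dual to the complement region of the arrangement on which every line of $\mathcal{L}$ attains its constant (respectively its $x$-, its $y$-) monomial, and such a region is the intersection of two closed half-planes, hence has a single vertex, dual to the single cell of $\mathcal{N}$ at that corner. Therefore it suffices to prove the \emph{Claim}: every triangular cell of $\mathcal{N}$ contains a corner of $n\Delta$. Granting the Claim, $\mathcal{N}$ has at most three triangular cells, so $(v^{max})_n\le 3$; and the arrangement of Figure~\ref{fig:extremal} realises $t=n$ with exactly three triangular cells, giving equality.

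To attack the Claim, let $T$ be a triangular cell, dual to the vertex $p_\ell$ of a free line $\ell$. Its three edges are dual to the three rays of $\ell$, and the balancing condition forces $T$ to be a unit lattice triangle $\mathrm{conv}\{(i,j),(i+1,j),(i,j+1)\}$, which always lies above its horizontal edge, to the right of its vertical edge, and below-left of its anti-diagonal edge. Consequently, if an edge of $T$ is interior to $n\Delta$, the cell across that edge lies on the opposite side and so, inspecting the possible shapes in Figure~\ref{fig:typesoffaces}, cannot itself be a unit triangle. Now a unit triangle cannot have all three edges on $\partial(n\Delta)$ (that would force $n=1$), and one checks that it has exactly two edges on $\partial(n\Delta)$ iff it sits at a corner of $n\Delta$; thus the Claim reduces to excluding a triangular cell $T$ with at most one boundary edge, i.e. with a vertex of $\mathcal{N}$ interior to $n\Delta$, i.e. with at most one of the three sectors at $p_\ell$ meeting an unbounded complement region. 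Equivalently: a free line must have at least two of its three rays escaping to infinity without meeting any other line.

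This last statement is the crux, and I expect it to be the main obstacle, since pure bookkeeping is not enough: Euler's relation for $\mathcal{N}$ gives $E_{\mathrm{int}}=V_{\mathrm{int}}+n-1$, counting cell edges gives $\sum_{F\ \mathrm{non\text-unif}}(e_F-3)=2V_{\mathrm{int}}+E_{\mathrm{bd}}-n-2$, and comparing total areas gives $\sum_{F\ \mathrm{non\text-unif}}(\omega_F+\sigma_F)=\binom{n}{2}$ with $\omega_F=\sum_i w_i(F)$ and $\sigma_F=\sum_{i<j}w_i(F)w_j(F)$ — relations that already force $(v^{max})_n\le 3$ for small $n$ but are too weak for general $n$. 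Instead I would argue locally, exploiting $c_F\equiv 1$: every vertex of the tropical curve $\bigcup\mathcal{L}$ is then an honest line-vertex, so the boundary of any bounded complement region incident to $p_\ell$ turns only at line-vertices; following how the three rays of $\ell$ bound the regions in the three sectors at $p_\ell$, one shows that two such regions cannot simultaneously close up — closing the sector between two consecutive rays of $\ell$ forces, through coaxiality with $\ell$ along their shared ray, a configuration of line-vertices which in turn closes the remaining sectors and violates $\sum_F c_F=n$. This local step, together with the two reductions above, completes the proof that $(v^{max})_n=3$.
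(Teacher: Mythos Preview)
Your reductions are clean and correct: the observation that $t=n$ together with Lemma~\ref{lem:lemma1} forces $c_F=1$ for every cell, the identification of triangular cells with ``free'' lines, the fact that two upward unit triangles cannot share an edge, and the corner characterisation via two boundary edges are all sound and in fact slightly sharper than what the paper makes explicit. The overall strategy---show that a triangular cell must sit at a corner of $n\Delta$---is exactly the paper's.

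The genuine gap is the final local step. You explicitly flag it as ``the crux'' and then only sketch: ``one shows that two such regions cannot simultaneously close up---closing the sector \ldots\ forces \ldots\ a configuration of line-vertices which in turn closes the remaining sectors and violates $\sum_F c_F=n$.'' This is not a proof. First, the invoked contradiction is wrong as stated: $\sum_F c_F=n$ is just the line count and holds automatically; nothing you do locally can violate it. What actually goes wrong is that a stable intersection of \emph{first} kind is forced, i.e.\ a new curve-vertex that is not a line-vertex (equivalently an extra cell with $c=0$, so $t>n$). Second, the mechanism ``closing one sector forces closing the remaining sectors'' is neither stated precisely nor argued; it is not clear what configuration you have in mind or why it propagates.

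The paper's proof supplies exactly this missing piece, by a direct and concrete local argument. If $T$ is interior (or has a single boundary edge), the cells $C_i$ adjacent to $T$ are non-uniform (they cannot be triangles, as you noted, and by Lemma~\ref{lem:lemma1} they cannot be semiuniform), so each is dual to a nearby line-vertex $D$, $E$, $F$ sitting on the three rays of $\ell$. Now look at the rays of those neighbouring lines: for instance the $(1,1)$-ray of the line at $D$ and the $(-1,0)$-ray of the line at $F$ must cross at some point $A$ strictly between $p_\ell$ and $F$; this $A$ is a stable intersection, and any line with vertex at $A$ would either hit $p_\ell$ (contradicting that $T$ is a triangle) or separate $T$ from one of the $C_i$ (contradicting adjacency). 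Hence $A$ is a stable intersection of first kind, contradicting Lemma~\ref{lem:lemma1}. That is the whole argument; your write-up should replace the vague last paragraph with this explicit construction of a forbidden transversal crossing.
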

\begin{proof}

As can be seen from the example above, there are explicit tropical line arrangements of $n$ tropical lines with $n$ faces in the dual Newton subdivision with exactly 3 triangular faces. 
 
We proceed by contradiction, and assume that $({v}{^{max}})_{n} > 3$. With $({v}{^{max}})_{n} > 3$, we can conclude that there does exist at least one triangular face $T$ in the relative interior of the Newton polygon, i.e., when no edges of $T$ lie on the boundary of the Newton polygon or there is at least one  triangular face $T$ which intersects the boundary of the Newton polygon in exactly one edge. We first consider the case when $T$ is in the relative interior of the Newton polygon. 

Let us consider the three faces $C_{1},C_{2}$ and $C_{3}$ that share an edge with the triangular face $T$ and we consider an example of the local line arrangement around $T$ as depicted in the Figure \ref{fig:case1}.

\begin{figure}
\begin{center}
\begin{tikzpicture}[ semic/.style args={#1,#2}{semicircle,minimum width=#1,draw,anchor=arc end,rotate=#2},outer sep=0pt,line width=.7pt]
\draw[] (0,0) -- (3.5,0);
\draw[] (0,0) -- (0,3.5);
\draw[] (3.5,0) -- (0,3.5);
\draw[] (0.5,1) -- (0.5,2);
\draw[] (0.5,1) -- (1.5,1);
\draw[] (0.5,2) -- (1.5,1);
\pgfmathsetmacro{\ATAN}{atan{-.95}} 
\node [dashed][semic={1cm,90}]    at (0.5,1){};
\node [dashed][semic={1cm,180}]   at (1.5,1){};
\node [dashed][semic={1.414cm,\ATAN}] at (0.5,2){};
\node[below left= 0.15mm of {(0.5,1.7)}] {$C_{1}$};
\node[below right= 0.05mm of {(0.8,1)}] {$C_{2}$};
\node[above right= 0.005mm of {(1,1.5)}] {$C_{3}$};
\node[above left= 0.0000000001mm of {(0.9,1.2)}] {$T$};
\end{tikzpicture}
\qquad
\begin{tikzpicture}[scale=0.48]
\draw[][dashed] (1.2,1.2) -- (-0.5,1.2);
\draw[->][dashed] (1.2,1.2) -- (1.2,-2);
\draw[->][dashed] (1.2,1.2) -- (3.5,3.5);
\draw[->][] (-0.5,1.2) -- (-0.5,-3);
\draw[->][] (-0.5,1.2) -- (1.5,3.2);
\draw[->][] (-0.5,1.2) -- (-2,1.2);
\draw[->][] (2,2) -- (3.5,3.5);
\draw[->][] (2,2) -- (2,-2.5);
\draw[->][] (2,2) -- (-2,2);
\draw[->][] (1.2,-0.5) -- (-3,-0.5);
\draw[->][] (1.2,-0.5) -- (1.2,-2);
\draw[->][] (1.2,-0.5) -- (4.2,2.5);
\draw[red,thick,dashed] (0.3,2) circle (0.2cm);
\draw[red,thick,dashed] (2,0.3) circle (0.2cm);
\draw[red,thick,dashed] (-0.5,-0.5) circle (0.2cm);
\fill[black] (0.3,2) circle (.0001cm) node[align=left,   above]{$A\quad$};
\fill[black] (2,0.3) circle (.0001cm) node[align=left,   above]{$B\quad$};
\fill[black] (-0.5,-0.5) circle (.0001cm) node[align=left,   above]{$C\quad$};
\fill[black] (-0.5,1.2) circle (.0001cm) node[align=left,   above]{$D\quad$};
\fill[black] (1.2,-0.5) circle (.0001cm) node[align=left,   above]{$E\quad$};
\fill[black] (2,2) circle (.0001cm) node[align=left,   above]{$F\quad$};
\fill[black] (1.2,1.2) circle (.0001cm) node[align=left,   above]{$l_{0}\quad$};
\fill[black] (1.5,3.2) circle (.0001cm) node[align=left,   above]{$l_{1}\quad$};
\fill[black] (3,3) circle (.0001cm) node[align=left,   above]{$l_{2}\quad$};
\fill[black] (5.1,2) circle (.0001cm) node[align=left,   above]{$l_{3}\quad$};
\end{tikzpicture}
\end{center}
\caption{Positions of cells in the Newton subdivision and the local line arrangement dual to it}
\label{fig:case1}
\end{figure}

In the figure we see that the points $D$, $E$ and $F$ represent the vertices of the tropical lines $l_{1},l_{2}$ and $l_{3}$ which are present at the stable intersections of second kind at these points, dual to the cells $C_{1},C_{2}$ and $C_{3}$ in $\mathcal{N}$. Also $l_{0}$ represents the line dual to the triangular face $T$. Also, by Lemma \ref{lem:lemma1} we know that no stable intersections of first kind are present in the line arrangement.

In the local picture, we obtain three stable intersections of first kind at the points $A, B$ and $C$. Since these points are stable intersections and by Lemma \ref{lem:lemma1} we know we can not have any stable intersections of first kind therefore there must exist a line with its vertex at these points. Let us consider one of these intersections, $A$. The points $D$, $A$ and $F$ are represented as $(x_{1},y_{1}),(x_{2},y_{2})$ and $(x_{3},y_{3})$, then it is easy to see that 

\[ x_{1} < x_{2} < x_{3} \]

This helps to conclude that if there is a tropical line present with vertex at A, then it would either intersect the lines $l_{0}$ and $l_{3}$ at two points, or meet the vertex of the line $l_{0}$. There cannot be a line with vertex at A meeting the vertex of the line $l_{0}$ as that would contradict the fact that the face corresponding to $l_{0}$ is a triangular face $T$ in $\mathcal{N}$. So we continue with the other case when the line has the vertex at A and intersects the lines $l_{0}$ and $l_{3}$ at two points. But there cannot be a tropical line present at A with two points of intersection with the lines $l_{0}$ and $l_{3}$, as that would contradict the fact that the cells $C_{1},C_{2}$ and $C_{3}$ corresponding to the stable intersections at $D$, $E$ and $F$, share an edge with the triangular face $T$. Hence, there cannot be a tropical line with a vertex at $A$, and therefore $A$ has to be a stable intersection of first kind, which contradicts Lemma \ref{lem:lemma1}. The same argument follows for the other two points of intersections, $B$ and $C$. However, this is a contradiction to the Lemma \ref{lem:lemma1}. Another  observation is that for all possibilities of non-uniform  faces (arising from stable intersections of second kind) surrounding $T$, we obtain points of intersections in similar positions as $A$, $B$ and $C$ which establishes the existence of at least three stable intersections of first kind, and hence gives a contradiction. Therefore, this shows that it is not possible to place a triangular face in the relative interior of the Newton polygon.

%\begin{figure}
    %\centering
    %\includegraphics[scale=1]{}
    %\caption{All possible cases for local line arrangements around the triangular face upto symmetry}
    %\label{fig:allextremalcases}
%\end{figure}

The other possible case is when the triangular face intersects the boundary of the Newton polygon in exactly one edge. This means that the triangular face is surrounded by other faces of the subdivision from two sides.

Without loss of generality, we take the triangular face to be intersecting with one of the edges of the Newton polygon as depicted in the Figure~\ref{fig:case2} and we look at the local line arrangement around the triangular face $T$. 

\begin{figure}
\begin{center}
\begin{tikzpicture}[semic/.style args={#1,#2}{semicircle,minimum width=#1,draw,anchor=arc end,rotate=#2},outer sep=0pt,line width=.7pt]
\draw[] (0,0) -- (3.5,0);
\draw[] (0,0) -- (0,3.5);
\draw[] (3.5,0) -- (0,3.5);
\draw[] (1,0) -- (2,0);
\draw[] (1,0) -- (1,1);
\draw[] (1,1) -- (2,0);
\pgfmathsetmacro{\ATAN}{atan{-.95}} 
\node [dashed][semic={1cm,90}]    at (1,0){};
\node [dashed][semic={1.414cm,\ATAN}] at (1,1){};
\node[above right= 0.05mm of {(1.6,0.6)}] {$C_{2}$};
\node[above left= 0.005mm of {(1.1,0.3)}] {$C_{1}$};
\node[above left= 0.0000000001mm of {(1.5,0.2)}] {$T$};
\end{tikzpicture}
\qquad
\begin{tikzpicture}[scale=0.6]
\draw[][dashed] (1.2,1.2) -- (-0.5,1.2);
\draw[->][dashed] (1.2,1.2) -- (1.2,-1.5);
\draw[->][dashed] (1.2,1.2) -- (3.5,3.5);
\draw[->][] (-0.5,1.2) -- (-0.5,-1.5);
\draw[->][] (-0.5,1.2) -- (1.5,3.2);
\draw[->][] (-0.5,1.2) -- (-2,1.2);
\draw[->][] (2,2) -- (3.5,3.5);
\draw[->][] (2,2) -- (2,-1.5);
\draw[->][] (2,2) -- (-2,2);
\draw[red,thick,dashed] (0.3,2) circle (0.2cm);
\fill[black] (0.3,2) circle (.0001cm) node[align=left,   above]{$A\quad$};
\fill[black] (-0.5,1.2) circle (.0001cm) node[align=left,   above]{$B\quad$};
\fill[black] (2,2) circle (.0001cm) node[align=left,   above]{$C\quad$};
\fill[black] (1.2,1.2) circle (.0001cm) node[align=left,   above]{$l_{0}\quad$};
\fill[black] (1.5,3.2) circle (.0001cm) node[align=left,   above]{$l_{1}\quad$};
\fill[black] (3,3) circle (.0001cm) node[align=left,   above]{$l_{2}\quad$};
\end{tikzpicture}
\caption{Positions of cell in the Newton subdivision and the local line arrangement dual to it}
\label{fig:case2}
\end{center}
\end{figure}

We argue in the same way as we did in the previous case, and realize that by Lemma \ref{lem:lemma1}, $C_{1}$ and $C_{2}$ are non-uniform faces. As we see in the figure the points $B$ and $C$ represent the vertices of the tropical lines $l_{1}$ and $l_{2}$ which are present at the stable intersection of second kind at these points, dual to the cells $C_{1}$ and $C_{2}$ in $\mathcal{N}$. Here $l_{0}$ represents the line dual to the triangular face $T$.

In the local picture, we obtain a stable intersection of first kind at the point $A$. If the points $B$, $A$ and $C$ are represented as $(x_{1},y_{1}),(x_{2},y_{2})$ and $(x_{3},y_{3})$, then it is easy to see that 

\[ x_{1} < x_{2} < x_{3} \]

This helps to conclude that if there is a tropical line present with vertex at $A$, then it would either intersect the line $l_{0}$, or meet the vertex of the line $l_{0}$. There cannot be a line with vertex at $A$ meeting the vertex of the line $l_{0}$ as that would contradict the fact that the face corresponding to $l_{0}$ is a triangular face $T$ in $\mathcal{N}$. So we continue with the other case when the line has the vertex at $A$ and intersects the lines $l_{0}$. But there cannot be a tropical line present at this intersection as that would contradict the fact that the cells $C_{1}$ and $C_{2}$  corresponding to the stable  intersections of second kind at $B$ and $C$ share an edge with the triangular face $T$. Hence, there cannot be a tropical line with a vertex at $A$, and therefore $A$ has to be a stable intersection of first kind, which contradicts Lemma \ref{lem:lemma1}. It is easy to verify that this contradiction occurs for all possibilities of  non-uniform faces (arising from stable intersections of second kind), which can be adjacent to $T$.

Therefore, the only places left to place a triangular face in the Newton polygon, are the three corners, and hence the maximum number of triangular faces that can be obtained is three, i.e., $({v}{^{max}})_{n} = 3$.
\end{proof}

With this result we obtain the following corollary,

\begin{corollary}
Let $\mathcal{L}$ be a tropical line arrangement of $n$ lines, such that $t = n$ and let $v$ denote the number of triangular faces present in the dual Newton subdivision $\mathcal{N}$. Then 
\[ n - v \geq n - 3 \]
\end{corollary}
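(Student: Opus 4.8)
The plan is to read this inequality straight off Lemma~\ref{lem:lemma2}. The hypothesis here is precisely the hypothesis of that lemma: $\mathcal{L}$ is a tropical line arrangement of $n$ lines whose dual Newton subdivision $\mathcal{N}$ has exactly $t = n$ faces. Hence Lemma~\ref{lem:lemma2} applies verbatim and tells us that the number of triangular faces in $\mathcal{N}$ is at most $(v^{max})_{n} = 3$, i.e. $v \le 3$. Subtracting this from $n$ (equivalently, negating the inequality $v \le 3$ and adding $n$ to both sides) gives $n - v \ge n - 3$, which is the assertion. So the proof is a one-line rearrangement of the previous lemma, and no further case analysis or geometric input is needed.

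It is worth recording, in the write-up, why this trivial-looking corollary is the relevant one. By the Corollary following Proposition~\ref{thm: duality}, the quantity $n - v$ is the number of non-triangular faces of $\mathcal{N}$, which equals the number of stable intersections occurring in $\mathcal{L}$; and by Proposition~\ref{thm: duality} itself, that number equals the number of stable tropical lines determined by the $n$-point configuration dual to $\mathcal{L}$. Thus $n - v \ge n - 3$ is exactly the $t = n$ instance of part~(1) of Theorem~\ref{thm:erdos} (with the $n$ dual points playing the role of $\mathcal{S}$). Spelling this out makes clear that the corollary is the extremal-case building block toward the main theorem, rather than a statement of independent interest.

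I do not anticipate any obstacle: the entire content lives in Lemma~\ref{lem:lemma2}, whose proof has already handled the two nontrivial placements of an interior or boundary-adjacent triangular face. The only point requiring a moment's care is purely bookkeeping — namely confirming that the hypothesis ``$t = n$'' of the corollary coincides with ``$\mathcal{N}$ has exactly $n$ faces'' in Lemma~\ref{lem:lemma2}, so that the bound $v \le 3$ may be invoked without modification.
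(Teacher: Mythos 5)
Your proposal is correct and matches the paper's (implicit) argument exactly: the corollary is stated as an immediate consequence of Lemma~\ref{lem:lemma2}, since $v \le (v^{max})_{n} = 3$ rearranges to $n - v \ge n - 3$. The contextual remarks about duality are accurate but not needed for the proof itself.
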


\begin{remark}
An important inference is that for tropical line arrangements of $n$ lines, with $n$ faces in the dual Newton subdivision, they occur in four distinct classes. Each class is represented by the number of triangular faces at the corners, which varies between $0, 1, 2$ and $3$.  
\end{remark}

With this result, we now know the bound on the number of stable intersections of an $n$ line arrangement with exactly $n$ faces in the corresponding dual Newton subdivision. We now move on to the more general situation.

We now define what it means for a semiuniform face to be \emph{determined} by a triangular face $T$.

\begin{definition}
A semiuniform face $S$ in a dual Newton subdivision is said to be \textbf{determined} by a triangular face $T$ if,

\begin{enumerate}
    \item $S$ is adjacent to $T$, i.e., $T$ and $S$ share an edge, or
    \item $S$ is located as the faces $S_{1},S_{2}$ or $S_{3}$ depicted in the Figure \ref{fig:determined}
\end{enumerate}
\end{definition}

 Here the shapes and location of these three semiuniform faces has to be exactly the same as shown in the figure in order for the faces to be determined by the triangular face $T$. We also note that edge lengths  of these faces need not be unit length, and they could be elongated depending on the lattice length parameters $w_{i}$ and $c$ of the adjacent faces to $T$. We also note that a triangular face determines at most six semiuniform faces; at most three adjacent to it and at most three non adjacent to it. 

\begin{center}
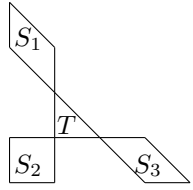
\begin{figure}
\begin{tikzpicture}[scale=0.6]
\draw[][] (0,0) -- (0,-1);
\draw[][] (0,-1) -- (1,-1);
\draw[][] (1,-1) -- (0,0);
\draw[] (0,0) -- (-1,1);
\draw[] (-1,1) -- (-1,2);
\draw[] (-1,2) -- (0,1);
\draw[] (0,1) -- (0,0);
\draw[] (0,-1) -- (0,-2);
\draw[] (0,-1) -- (-1,-1);
\draw[] (-1,-1) -- (-1,-2);
\draw[] (0,-2) -- (-1,-2);
\draw[] (1,-1) -- (2,-1);
\draw[] (1,-1) -- (2,-2);
\draw[] (2,-2) -- (3,-2);
\draw[] (2,-1) -- (3,-2);
\node[below left= 0.15mm of {(0.7,-0.3)}] {$T$};
\node[below left= 0.15mm of {(-0.05,1.65)}] {$S_{1}$};
\node[below left= 0.15mm of {(-0.05,-1.15)}] {$S_{2}$};
\node[below left= 0.15mm of {2.6,-1.15)}] {$S_{3}$};
\end{tikzpicture}
\caption{The non-adjacent semiuniform faces determined by a triangular face $T$}
\label{fig:determined}
\end{figure}
\end{center}

We note that as a consequence of the definition, the determined faces $S_{1},S_{2}$ or $S_{3}$ cannot be hexagonal faces.

With the above definitions, we look at the number of semiuniform faces determined by a triangular face depending on the location of the triangular face in the dual Newton subdivision.

\begin{theorem}\label{thm:triangledetermine}\label{thm:basecase}
Let $\mathcal{L}$ be a tropical line arrangement of $n$ lines and let $\mathcal{N}$ be its dual Newton subdivision. If $T$ is a triangular face in $\mathcal{N}$ (excluding the corners), then \begin{enumerate}
    \item $T$ determines at least three seminuniform faces if $T$ is in the relative interior of the Newton polygon; i.e., when no edges of $T$ lie on the boundary of the Newton polygon.
    \item $T$ determines at least one seminuniform face if $T$ is at the boundary of the Newton polygon; i.e, when one of the edges of $T$ lie on the boundary of the Newton polygon. 
\end{enumerate} 
\end{theorem}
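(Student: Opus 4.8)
The plan is to imitate closely the strategy already used in the proof of Lemma~\ref{lem:lemma2}, but now tracking which cells get \emph{created} around $T$ rather than deriving a contradiction from their existence. The key point is that in Lemma~\ref{lem:lemma2} the contradiction came precisely from the fact that, in the local line arrangement dual to a triangular face $T$ placed in the interior (or on the boundary in one edge), there automatically appear three (respectively one) stable intersections of \emph{first} kind --- the points $A$, $B$, $C$ (resp.\ $A$) --- unless some line has its vertex there, which was shown to be impossible without contradicting the triangularity of $T$ or the shape of the adjacent cells $C_i$. Here we drop the hypothesis $t = n$ (so Lemma~\ref{lem:lemma1} is no longer available, and lines \emph{may} have vertices at $A,B,C$), and instead argue that \emph{regardless} of whether $A,B,C$ are stable intersections of first or second kind, the cells dual to them are semiuniform or non-uniform faces in the prescribed locations relative to $T$, i.e.\ exactly the faces $S_1,S_2,S_3$ (and/or the adjacent faces) from Definition of \enquote{determined}.

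First I would set up the interior case exactly as in Figure~\ref{fig:case1}: let $C_1,C_2,C_3$ be the three faces sharing an edge with $T$, let $l_0$ be the line dual to $T$, and let $l_1,l_2,l_3$ be lines whose vertices/rays bound $C_1,C_2,C_3$. The local line arrangement then produces three intersection points $A,B,C$ off the vertices of $l_0$, positioned (by the coordinate inequality $x_1 < x_2 < x_3$ established in Lemma~\ref{lem:lemma2}) so that each of them is a stable intersection. I would then do a case split on each of $A,B,C$: either it is a stable intersection of first kind, in which case by Proposition~\ref{thm: duality} and the Corollary after it, the dual cell is a parallelogram or hexagon --- but the determined-face definition forbids hexagons in the $S_i$ slots, so I must check from the local combinatorics (only two lines, $l_0$ and $l_i$, plus possibly $l_j$, are involved, never three pairwise non-coaxial ones meeting there) that a hexagon cannot occur, leaving a parallelogram; or $A$ is a stable intersection of second kind, and then a line has its vertex there, and I read off from Figure~\ref{fig:cell_shape} that the dual cell is one of the non-uniform quadrilateral/pentagon shapes, again landing in an $S_i$ slot or adjacent to $T$. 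Either way each of $A,B,C$ contributes a distinct semiuniform face (if first kind) — and when it is second kind one instead argues that the adjacency forces one of the \emph{other} local intersections to be first kind, or more cleanly, one counts that among the faces forced to appear around $T$ at least three are semiuniform. The boundary case is the same argument with only the single point $A$ surviving (Figure~\ref{fig:case2}), yielding at least one semiuniform face.

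The main obstacle will be the bookkeeping in the second-kind subcases: when $A$ (or $B$, or $C$) carries a line vertex, the face it is dual to is non-uniform, not semiuniform, so I cannot simply say \enquote{$A$ gives a semiuniform face.} The resolution I expect is that placing a line vertex at $A$ pushes the semiuniform face one step outward --- the parallelogram that would have sat at $A$ reappears as one of the $S_i$ in Figure~\ref{fig:determined} attached to the non-uniform face, which is exactly why the definition of \enquote{determined} includes the non-adjacent faces $S_1,S_2,S_3$. So the real content is a finite check that, over all combinatorial types of non-uniform faces that can be adjacent to $T$ (the list is exactly Figure~\ref{fig:typesoffaces}), the semiuniform faces $S_1,S_2,S_3$ in the stated shapes and positions always materialize; this is a finite case analysis and the phrase \enquote{for all possibilities of non-uniform faces surrounding $T$ we obtain points of intersection in similar positions} from the proof of Lemma~\ref{lem:lemma2} is precisely the mechanism that makes it go through. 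I would present the interior case in full and remark that the boundary case and the enumeration of non-uniform neighbours are verified identically.
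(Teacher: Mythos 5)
Your overall skeleton---reuse the local picture of Figure~\ref{fig:case1}, observe that the mechanism of Lemma~\ref{lem:lemma2} actually produces the cells at $A,B,C$, and finish with a finite check over the possible neighbours of $T$---is the paper's strategy, but two steps of your proposal would fail as written. First, your second-kind subcase rests on a misreading of where Lemma~\ref{lem:lemma1} enters Lemma~\ref{lem:lemma2}: the part of that argument which excludes a line vertex at $A$ (such a line would either meet the vertex of $l_0$, contradicting that $T$ is a triangle, or cross the rays bounding the cells $C_1,C_2,C_3$, contradicting their adjacency to $T$) uses only the triangularity of $T$ and the adjacency of the non-uniform neighbours, not the hypothesis $t=n$; Lemma~\ref{lem:lemma1} is invoked only afterwards, to turn ``$A$ is of first kind'' into a contradiction. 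So in the configuration you set up, $A,B,C$ simply cannot be stable intersections of second kind, and that subcase is vacuous. This matters because your proposed patch for it---that the parallelogram is ``pushed one step outward'' and ``reappears as one of the $S_i$''---is inconsistent with the definition of determined faces: the slots $S_1,S_2,S_3$ of Figure~\ref{fig:determined} are exactly the positions dual to $A,B,C$, and a semiuniform face lying one step beyond them is not determined by $T$, so it would contribute nothing to the count of three.

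Second, your mechanism ``each of $A,B,C$ yields a determined semiuniform face'' presupposes that all three faces adjacent to $T$ are non-uniform, i.e.\ that $l_1,l_2,l_3$ have their vertices at the points dual to $C_1,C_2,C_3$; only then do the crossings $A,B,C$ exist in the positions of Figure~\ref{fig:case1} (respectively $A$ in Figure~\ref{fig:case2} for the boundary case). When $T$ is adjacent to one or two semiuniform faces the local picture is different, and the total of three must be assembled from the adjacent semiuniform faces themselves together with the non-adjacent determined faces forced by the position of $T$. This is precisely the case split the paper runs---on the number of semiuniform faces adjacent to $T$, namely $3$, $2$, $1$ or $0$, with Figure~\ref{fig;theorem5} treating the mixed cases and the Lemma~\ref{lem:lemma2} picture treating the all-non-uniform case---and your proposal does not address it. (Your extra check that a hexagon cannot sit in an $S_i$ slot is a sensible verification, and it goes through by the same crossing argument, but it is not the missing piece.) Once you recognize the second-kind subcase as vacuous and add the adjacency case split, your argument collapses into the paper's proof.
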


\begin{proof}

We continue with the discussion in the Lemma \ref{lem:lemma2}. As we see in Figure~\ref{fig:case1}, it is shown that a triangular face $T$, which is not adjacent to a semiuniform face, determines at least three semiuniform faces if $T$ is in the interior and at least one semiuniform face if $T$ is located at the boundary. However, semiuniform faces might also occur as faces adjacent to the triangular face. Therefore, when we consider the triangular face $T$ in the interior, then $T$ can be adjacent to either one, two or at most three semiuniform faces. We know that if $T$ is adjacent to semiuniform faces at all edges, then there are at least three semiuniform faces determined by $T$ in the subdivision, trivially. Now we consider the case, when the triangular face is adjacent to two semiuniform faces. In this case, the location of the triangular face, implies existence of at least one non-adjacent semiuniform faces. Similarly, in the case when the triangular face is adjacent to one semiuniform face, at least two non-adjacent semiuniform faces are obtained. Both these cases are illustrated through an example in the Figure~\ref{fig;theorem5}. Hence if a triangular face is in the interior of the Newton polygon, then it implies the existence of at least three semiuniform faces. 

\begin{center}
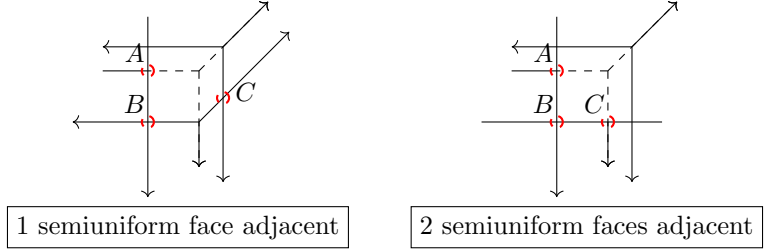
\begin{figure}
\begin{tikzpicture}[scale=0.4]
\draw[][dashed] (1.2,1.2) -- (-0.5,1.2);
\draw[->][dashed] (1.2,1.2) -- (1.2,-2);
\draw[->][dashed] (1.2,1.2) -- (3.5,3.5);
\draw[->][] (-0.5,1.2) -- (-0.5,-3);
\draw[][] (-0.5,1.2) -- (-0.5,3);
\draw[][] (-0.5,1.2) -- (-2,1.2);
\draw[->][] (2,2) -- (3.5,3.5);
\draw[->][] (2,2) -- (2,-2.5);
\draw[->][] (2,2) -- (-2,2);
\draw[->][] (1.2,-0.5) -- (-3,-0.5);
\draw[->][] (1.2,-0.5) -- (1.2,-2);
\draw[->][] (1.2,-0.5) -- (4.2,2.5);
\draw[red,thick,dashed] (-0.5,1.2) circle (0.2cm);
\draw[red,thick,dashed] (2,0.3) circle (0.2cm);
\draw[red,thick,dashed] (-0.5,-0.5) circle (0.2cm);
\fill[black] (3.2,-0.1) circle (.0001cm) node[align=left,   above]{$C\quad$};
\fill[black] (-0.5,-0.5) circle (.0001cm) node[align=left,   above]{$B\quad$};
\fill[black] (-0.5,1.2) circle (.0001cm) node[align=left,   above]{$A\quad$};
\node[draw] at (0.5,-4) {1 semiuniform face adjacent};
\end{tikzpicture}
\qquad
\begin{tikzpicture}[scale=0.4]
\draw[][dashed] (1.2,1.2) -- (-0.5,1.2);
\draw[->][dashed] (1.2,1.2) -- (1.2,-2);
\draw[->][dashed] (1.2,1.2) -- (3.5,3.5);
\draw[->][] (-0.5,1.2) -- (-0.5,-3);
\draw[][] (-0.5,1.2) -- (-0.5,3);
\draw[][] (-0.5,1.2) -- (-2,1.2);
\draw[->][] (2,2) -- (3.5,3.5);
\draw[->][] (2,2) -- (2,-2.5);
\draw[->][] (2,2) -- (-2,2);
\draw[][] (1.2,-0.5) -- (-3,-0.5);
\draw[->][] (1.2,-0.5) -- (1.2,-2);
\draw[][] (1.2,-0.5) -- (3,-0.5);
\draw[red,thick,dashed] (-0.5,1.2) circle (0.2cm);
\draw[red,thick,dashed] (1.2,-0.5) circle (0.2cm);
\draw[red,thick,dashed] (-0.5,-0.5) circle (0.2cm);
\fill[black] (1.2,-0.5) circle (.0001cm) node[align=left,   above]{$C\quad$};
\fill[black] (-0.5,-0.5) circle (.0001cm) node[align=left,   above]{$B\quad$};
\fill[black] (-0.5,1.2) circle (.0001cm) node[align=left,   above]{$A\quad$};
\node[draw] at (0.5,-4) {2 semiuniform faces adjacent};
\end{tikzpicture}
\caption{Examples depicting local line arranagements dual to  a triangular face with 1 or 2 semiuniform faces adjacent to it.}
\label{fig;theorem5}
\end{figure}
\end{center}

Similarly, if we consider the case when the triangular face $T$ is located at the boundary, then if there are semiuniform faces adjacent to $T$ at one or two edges, then there exists at least one semiuniform face in the subdivision, trivially. If $T$ is not adjacent to any semiuniform faces, then we see in Figure~\ref{fig:case2}, that $T$ determines at least one semiuniform face. Hence, we can conclude that if a triangular face is at the boundary then it determines at least one semiuniform face.
\end{proof}

We now move on to count the total number of semiuniform face determined by the triangular faces. Since two or more triangular faces can determine common semiuniform faces, therefore the total count cannot be a direct sum of determined faces of all triangular faces. 

With an abuse of notation we denote $T$ to be a triangular face and  $n(T)$ represent the number of semiuniform faces determined by the triangular face $T$. Hence, $n(T_{1} \cup ... \cup T_{m})$ denotes the total number of semiuniform faces determined by the triangular faces $T_{1},...,T_{m}$.

\begin{theorem}\label{thm:induction}
Let $\mathcal{L}$ be a tropical line arrangement of $n$ lines and $\mathcal{N}$ be its dual Newton subdivision, with $T_{1} \hdots T_{m}$ being the triangular faces in $\mathcal{N}$ (excluding the corners) and $k$ be the number of stable intersections of first kind. Then,

\[ k \geq n(T_{1} \cup T_{2} \hdots \cup T_{m}) \geq m \]
\end{theorem}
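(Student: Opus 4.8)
The plan is to establish the two inequalities separately. The right-hand inequality $n(T_1 \cup \cdots \cup T_m) \geq m$ is the easier one: by Theorem~\ref{thm:triangledetermine}, every triangular face $T_i$ (excluding the corners) determines at least one semiuniform face, so one would like to say the total count is at least $m$. But this is not immediate because distinct triangular faces may determine the same semiuniform face. So the real content is a counting/matching argument showing that the determined faces can be assigned to the triangles without too much collision — concretely, I would argue that one can select, for each $T_i$, a semiuniform face $S(T_i)$ determined by it, in such a way that the map $T_i \mapsto S(T_i)$ is injective. This is where induction on $m$ enters: remove one triangular face $T_m$ that is \emph{extremal} in some sense (e.g. closest to a corner of the Newton polygon, or minimal with respect to some sweep order), show that it determines a semiuniform face $S$ that is \emph{not} determined by any of the remaining $T_1, \dots, T_{m-1}$, and then apply the inductive hypothesis to $\{T_1,\dots,T_{m-1}\}$.

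For the left-hand inequality $k \geq n(T_1 \cup \cdots \cup T_m)$, I would argue that every semiuniform face in $\mathcal{N}$ is dual to a stable intersection of first kind (this is recorded in the excerpt: semiuniform faces — parallelograms and hexagons — correspond exactly to stable intersections of first kind). Hence the set of \emph{all} semiuniform faces determined by the triangular faces is a subset of the set of all semiuniform faces, which in turn is in bijection with the set of stable intersections of first kind, which has cardinality $k$. Therefore $k \geq (\text{number of semiuniform faces}) \geq n(T_1 \cup \cdots \cup T_m)$, and the first inequality follows with no further work. The only subtlety is to make sure the notion ``$n(T_1 \cup \cdots \cup T_m)$'' is defined as the cardinality of the \emph{union} of the sets of determined faces (so that it counts each face once), which is exactly the abuse of notation set up just before the theorem statement.

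The main obstacle is the inductive step for $n(T_1 \cup \cdots \cup T_m) \geq m$: one must choose the ordering of the triangles and the rule $T \mapsto S(T)$ so that removing the last triangle frees up a determined face not claimed by the others. The cleanest route is to order the triangular faces by a linear functional on $\mathbb{R}^2$ (say, by the $x$-coordinate of a chosen vertex, breaking ties by $y$), pick $T_m$ to be the one maximal in this order, and then use the explicit local pictures from Figures~\ref{fig:case1}, \ref{fig:case2}, and \ref{fig:determined} together with Theorem~\ref{thm:triangledetermine} to exhibit a determined semiuniform face of $T_m$ lying ``to the right'' of all other triangular faces — hence not among the (at most six, and all lying in a bounded neighborhood ``to the left'') faces determined by any $T_i$ with $i < m$. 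I would carry out the steps in this order: (i) recall semiuniform faces $\leftrightarrow$ stable intersections of first kind to get $k \geq \#\{\text{semiuniform faces}\} \geq n(T_1\cup\cdots\cup T_m)$; (ii) set up the linear order on triangular faces and the selection rule; (iii) prove injectivity of the selection rule by the extremality argument, invoking Theorem~\ref{thm:triangledetermine} for the existence of a determined face and the figures for its location; (iv) conclude $n(T_1\cup\cdots\cup T_m)\geq m$ by induction, completing the chain of inequalities.
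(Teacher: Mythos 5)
Your first inequality is fine: since every determined semiuniform face is a parallelogram or hexagon and these are exactly the faces dual to stable intersections of first kind, $n(T_1\cup\cdots\cup T_m)$ is bounded above by the total number of such faces, which is $k$. The gap is in the second inequality, and it sits exactly where you yourself locate "the main obstacle": the claim that the extremal (say rightmost) triangular face $T_m$ has a determined semiuniform face lying to the right of everything determined by $T_1,\dots,T_{m-1}$, because those faces supposedly lie "in a bounded neighborhood to the left". That localization is false. The faces determined by a triangle surround it: besides the up to three adjacent faces, the non-adjacent determined faces $S_1,S_2,S_3$ of Figure \ref{fig:determined} sit up-left, down-left and to the \emph{right} of the triangle. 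Consequently two nearby triangles can, and frequently do, determine the same semiuniform face -- the configurations enumerated in the proof of Theorem \ref{thm:sharp} (e.g.\ Figures \ref{fig:Case2}, \ref{fig:Case3}) are precisely instances where a parallelogram or hexagon is shared by two or three triangles, including the extremal one. So picking the rightmost triangle does not by itself produce a "private" determined face, and the injectivity of the selection rule $T_i\mapsto S(T_i)$ (which, via Hall, is a strictly stronger assertion than the union bound you are trying to prove) is left unestablished. Without a case analysis showing how a private representative can always be extracted, the induction does not close.

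For comparison, the paper does not induct within a fixed subdivision at all. It inducts on $m$ over subdivisions: given $\mathcal{N}$ with $m$ interior triangles, it performs an explicit local rearrangement of the line arrangement (moving the line $l'$ dual to a chosen triangle $T$, together with coaxial neighbours, in three cases according to whether $T$ is adjacent to a semiuniform face, to a five- or six-edged non-uniform face, or only to four-edged non-uniform faces) to produce a new arrangement $\mathcal{L}'$ whose subdivision $\mathcal{N}'$ has exactly $m-1$ triangles, and then verifies that a specific determined semiuniform face of $\mathcal{N}$ (dual to a first-kind intersection $P$) either disappears in $\mathcal{N}'$ or ceases to be determined there; applying the hypothesis to $\mathcal{N}'$ and adding back this one face gives $n(T_1\cup\cdots\cup T_m)\geq m$. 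Your plan avoids any modification of the arrangement, which would be an attractive simplification, but to make it work you would have to replace the false "everything lies to the left" argument with a genuine structural analysis of how determined faces can be shared -- essentially the same kind of local case analysis the paper spends its effort on.
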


\begin{proof}
We proceed with induction on $m$, with the base case being $m=1$. We see that in this case, by Theorem \ref{thm:basecase}, we know that the unique triangle present in the interior of $\mathcal{N}$ determines at least one semiuniform face, therefore 
\[ k \geq n(T) \geq 1 \]

\begin{figure}\centering
\begin{tikzpicture}[scale=0.35]
\draw[] (0,2) -- (2,4);
\draw[<-] (-2,4) -- (2,4);
\draw[] (2,4) -- (4,4);
\draw[->] (0,2) -- (-2,2);
\draw[->] (0,2) -- (0,0);
\draw[->] (2,4) -- (4,6);
\draw[][->] (0.3,2) -- (2.2,3.8);
\draw[dashed] (1.5,3) circle (3cm);
\draw[dashed][->] (-1.5,1.8) -- (-2.5,1.8);
\draw[dashed][->] (-0.2,1) -- (-0.2,0);
\fill[black] (0,2) circle (.0001cm) node[align=left,   above]{$T\quad$};
\fill[black] (2,4) circle (.0001cm) node[align=left,   above]{$P\quad$};
\end{tikzpicture}
\hspace{3cm}
\begin{tikzpicture}[scale=0.35]
\draw[<-] (2,0) -- (2,4);
\draw[<-] (-2,4) -- (2,4);
\draw[] (2,4) -- (4,4);;
\draw[->] (2,4) -- (4,6);
\draw[dashed] (1.5,3) circle (3cm);
\draw[dashed][->] (-1.5,3.8) -- (-2.5,3.8);
\draw[dashed][->] (1.8,1) -- (1.8,0);
\end{tikzpicture}
\caption{An example to illustrate the rearrangement when $T$ is adjacent to semiuniform faces.}
\label{fig;first_kind}
\end{figure}
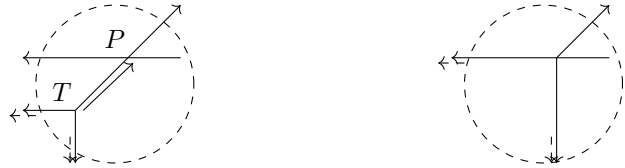

Firstly, we consider a subdivision $\mathcal{N}$ with $m$ triangular faces in the interior. We now show that for any such subdivision $\mathcal{N}$, we can always construct a subdivision $\mathcal{N}'$, such that $\mathcal{N}'$ has exactly $m-1$ triangular faces, via a rearrangement of $\mathcal{L}$ to $\mathcal{L}'$ . We consider a triangular face $T$ in $\mathcal{N}$, dual to $l'$ in $\mathcal{L}$, which we rearrange to obtain a stable intersection in order to construct the subdivision $\mathcal{N}'$. We go through the following cases based on the types of faces adjacent to $T$ in $\mathcal{N}$,

\begin{enumerate}
    \item If $T$ has at least one semiuniform face adjacent to it, dual to a stable intersection of first kind $P$.
    \end{enumerate}

We move the vertex of the line $l'$, dual to $T$, along with coaxial lines towards $P$, such that the vertex of $l'$ is superimposed on the point $P$, illustrated in the Figure \ref{fig;first_kind}. If during the rearrangement, any rays of the lines coaxial to $l'$ meet the vertex of another line, which might result in a reduction in the total number of triangular faces, we can consider a local perturbation of the vertex of such a line, along the half ray, and in this way we can prevent such a situation. In this way we obtain a subdivision $\mathcal{N}'$ with exactly $m-1$ triangular faces, via a local rearrangement. We also notice that the determined semiuniform face dual to the point $P$ in $\mathcal{N}$, ceases to exist in $\mathcal{N'}$, since the vertex of $l'$ gets superimposed on $P$.

\begin{enumerate}[resume]
    \item If $T$ is adjacent only to non-uniform faces, with at least one of the adjacent non-uniform faces being five or six edged. 
\end{enumerate}

If $T$ is adjacent to non-uniform faces in $\mathcal{N}$, then by the definition of determined faces from the Figure \ref{fig:determined}, we realize that $T$ determines uniquely at least one non-adjacent semiuniform face dual to a stable intersection of first kind $P$, in $\mathcal{N}$. We move the vertex of the line $l'$ dual to $T$ (along with any coaxial lines to $l'$ if there exist any), illustrated in Figure \ref{fig;five_edged}, such that it meets the half ray of another line in $\mathcal{L}$ and there is an effective decrease in the number of triangular faces by $1$ (in our example we assume $P_{2}$ to be the face which has to be a five or six edged face). We show the location of lines coaxial to $l'$ (if present) by a dotted arrow along the ray of coaxiality in the rearrangement. If during the rearrangement, any rays of the lines coaxial to $l'$ meet the vertex of another line, which might result in the reduction in the total number of triangular faces, we can consider a local perturbation of the vertex of such a line, along the half ray, and in this way we can prevent such a situation. Hence in this way we construct a subdivision $\mathcal{N}'$  with exactly $m-1$ triangular faces, via a local rearrangement. We also observe that the determined semiuniform face dual to $P$ in $\mathcal{N}$ no more remains a determined semiuniform face in $\mathcal{N'}$, because firstly by the definition of determined faces, the face dual to $P$ cannot be a hexagon. Additionally, out of the four edges of the face dual to $P$, only at two edges can it be adjacent to triangular faces, and we realize that in $\mathcal{N}'$ at both these edges, the face is adjacent to non-triangular faces. Hence, the face dual to $P$ cannot be a determined face by virtue of being adjacent to a triangular face in $\mathcal{N}'$. Also, it can neither be a non-adjacent determined face, since the face dual to $P$ was the unique non-adjacent determined face with respect to $T$, and the triangular face $T$ no longer exists in $\mathcal{N}'$.

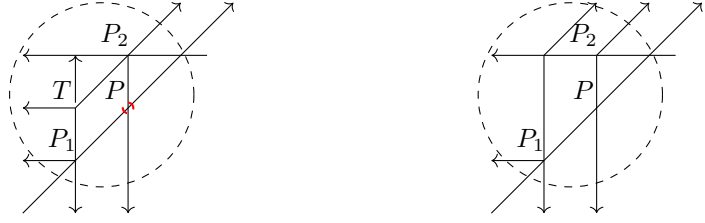
\begin{figure}\centering
\begin{tikzpicture}[scale=0.35]
\draw[] (0,2) -- (2,4);
\draw[<-] (2,-2) -- (2,4);
\draw[<-] (-2,4) -- (2,4);
\draw[] (2,4) -- (5,4);
\draw[->] (0,2) -- (-2,2);
\draw[->] (0,2) -- (0,-2);
\draw[->] (2,4) -- (4,6);
\draw[->] (0,0) -- (-2,0);
\draw[->] (0,0) -- (6,6);
\draw[] (0,0) -- (-2,-2);
\draw[][->] (0,2.2) -- (0,4);
\draw[dashed] (1,2.5) circle (3.5cm);
\draw[red,thick,dashed] (2,2) circle (0.2cm);
\fill[black] (0,2) circle (.0001cm) node[align=left,   above]{$T\quad$};
\fill[black] (2,2) circle (.0001cm) node[align=left,   above]{$P\quad$};
\fill[black] (0,0) circle (.0001cm) node[align=left,   above]{$P_{1}\quad$};
\fill[black] (2,4) circle (.0001cm) node[align=left,   above]{$P_{2}\quad$};
\end{tikzpicture}
\hspace{3cm}
\begin{tikzpicture}[scale=0.35]
\draw[<-] (2,-2) -- (2,4);
\draw[<-] (-2,4) -- (2,4);
\draw[] (2,4) -- (5,4);;
\draw[->] (0,4) -- (0,-2);
\draw[->] (2,4) -- (4,6);
\draw[->] (0,4) -- (2,6);
\draw[->] (0,0) -- (-2,0);
\draw[->] (0,0) -- (6,6);
\draw[] (0,0) -- (-2,-2);
\draw[dashed] (1,2.5) circle (3.5cm);
\fill[black] (2,2) circle (.0001cm) node[align=left,   above]{$P\quad$};
\fill[black] (0,0) circle (.0001cm) node[align=left,   above]{$P_{1}\quad$};
\fill[black] (2,4) circle (.0001cm) node[align=left,   above]{$P_{2}\quad$};
\end{tikzpicture}
\caption{An example to illustrate the rearrangement when $T$ is adjacent to five or six edged non-uniform faces.}
\label{fig;five_edged}
\end{figure}

%\begin{figure}\centering
%\begin{tikzpicture}[scale=0.65]
%\draw[] (0,2) -- (2,4);
%\draw[<-] (2,0) -- (2,4);
%\draw[<-] (-2,4) -- (2,4);
%\draw[] (2,4) -- (4,4);
%\draw[->] (0,2) -- (-2,2);
%\draw[->] (0,2) -- (0,0);
%\draw[->] (2,4) -- (4,6);
%\draw[][->] (0,2) -- (0,4);
%\draw[dashed][->] (-1.5,1.8) -- (-2.5,1.8);
%\draw[dashed] (1.5,3) circle (3cm);
%\draw[dashed][->] (-0.2,1) -- (-0.2,0);
%\fill[black] (0,2) circle (.0001cm) node[align=left,   above]{$T\quad$};
%\end{tikzpicture}
%\hspace{3cm}
%\begin{tikzpicture}[scale=0.65]
%\draw[<-] (2,0) -- (2,4);
%\draw[<-] (-2,4) -- (2,4);
%\draw[] (2,4) -- (4,4);;
%\draw[->] (0,4) -- (0,0);
%\draw[->] (2,4) -- (4,6);
%\draw[dashed] (1.5,3) circle (3cm);
%\draw[->] (0,4) -- (2,6);
%\draw[dashed][->] (-1.5,3.8) -- (-2.5,3.8);
%\draw[dashed][->] (-0.2,1) -- (-0.2,0);
%\end{tikzpicture}
%\caption{An example to illustrate the rearrangement when $T$ is adjacent to five or six edged non-uniform faces.}
%\label{fig;five_edged}
%\end{figure}
 
 \begin{enumerate}[resume]
    \item If $T$ is adjacent to only four-edged non-uniform faces.
\end{enumerate}

\begin{figure}\centering
\begin{tikzpicture}[scale=0.45]
\draw[] (1.2,1.2) -- (-0.5,1.2);
\draw[->] (1.2,1.2) -- (1.2,-2);
\draw[->] (1.2,1.2) -- (3.5,3.5);
\draw[->][] (-0.5,1.2) -- (-0.5,-3);
\draw[->][] (-0.5,1.2) -- (2,3.7);
\draw[][] (-0.5,1.2) -- (-2,1.2);
\draw[->][] (2,2) -- (3.5,3.5);
\draw[->][] (2,2) -- (2,-2.5);
\draw[->][] (2,2) -- (-2,2);
\draw[->][] (1.2,-0.5) -- (-3,-0.5);
\draw[->][] (1.2,-0.5) -- (1.2,-2);
\draw[->][] (1.2,-0.5) -- (4.2,2.5);
\draw[dashed] (1,1) circle (3cm);
\draw[red,thick,dashed] (2,0.3) circle (0.2cm);
\draw[dashed][->] (-0.6,1) -- (-1.5,1);
\draw[dashed][->] (0.2,2.1) -- (1.2,3.1);
\draw[dashed][->] (0.2,2.1) -- (-1,2.1);
\draw[dashed][->] (-0.7,-0.7) -- (-2,-0.7);
\draw[dashed][->] (-0.7,-0.7) -- (-0.7,-1.7);
\draw[dashed][->] (1.1,-0.7) -- (1.1,-1.7);
\draw[dashed][->] (2.15,0.3) -- (3.15,1.3);
\draw[dashed][->] (2.15,0.3) -- (2.15,-1.5);
\draw[dashed][->] (-0.7,-0.7) -- (-0.7,-1.7);
\draw[dashed][->] (2.2,2) -- (3.2,3);
\fill[black] (2,0.3) circle (.0001cm) node[align=left,   above]{$P\quad$};
\fill[black] (0.3,2) circle (.0001cm) node[align=left,   above]{$Q\quad$};
\fill[black] (-0.5,-0.5) circle (.0001cm) node[align=left,   above]{$R\quad$};
\fill[black] (1.2,1.2) circle (.0001cm) node[align=left,   above]{$T\quad$};
\fill[black] (2,2) circle (.0001cm) node[align=left,   above]{$l_{1}\quad$};
\fill[black] (1.2,-0.5) circle (.0001cm) node[align=left,   above]{$l_{2}\quad$};
\fill[black] (-0.5,1.2) circle (.0001cm) node[align=left,   above]{$l_{3}\quad$};
\end{tikzpicture}
\hspace{3cm}
\begin{tikzpicture}[scale=0.45]
\draw[] (0,2) -- (3,2);
\draw[] (3,2) -- (3,5);
\draw[] (0,5) -- (3,5);
\draw[] (0,2) -- (0,5);
\draw[] (0,2) -- (3,5);
\draw[->] (0,2) -- (-2,2);
\draw[->] (0,2) -- (0,0);
\draw[->] (0,5) -- (-2,5);
\draw[->] (0,5) -- (2,7);
\draw[->] (3,2) -- (3,0);
\draw[->] (3,2) -- (5,4);
\draw[->] (3,5) -- (5,7);
\draw[dashed] (1.5,3) circle (3cm);
\draw[dashed][->] (3.2,5) -- (4.7,6.5);
\draw[dashed][->] (0.2,5) -- (1.7,6.5);
\draw[dashed][->] (-0.2,1.8) -- (-2,1.8);
\draw[dashed][->] (-0.2,1.8) -- (-0.2,0);
\draw[dashed][->] (3.2,2) -- (4.7,3.5);
\draw[dashed][->] (3.2,2) -- (3.2,0);
\fill[black] (3,2) circle (.0001cm) node[align=left,   above]{$l_{2}\quad$};
\fill[black] (3,5) circle (.0001cm) node[align=left,   above]{$l_{1}\quad$};
\fill[black] (0,5) circle (.0001cm) node[align=left,   above]{$l_{3}\quad$};
\end{tikzpicture}
\hspace{4cm}
\begin{tikzpicture}[scale=0.45]
\draw[] (1.2,1.2) -- (-0.5,1.2);
\draw[->] (1.2,1.2) -- (1.2,-2);
\draw[->] (1.2,1.2) -- (3.5,3.5);
\draw[->][] (-0.5,1.2) -- (-2,1.2);
\draw[->][] (2,2) -- (3.5,3.5);
\draw[->][] (2,2) -- (2,-2.5);
\draw[->][] (2,2) -- (-2,2);
\draw[->][] (1.2,-0.5) -- (-3,-0.5);
\draw[->][] (1.2,-0.5) -- (1.2,-2);
\draw[->][] (1.2,-0.5) -- (4.2,2.5);
\draw[dashed] (1,1) circle (3cm);
\draw[red,thick,dashed] (2,0.3) circle (0.2cm);
\fill[black] (2,0.3) circle (.0001cm) node[align=left,   above]{$P\quad$};
\fill[black] (1.2,1.2) circle (.0001cm) node[align=left,   above]{$T\quad$};
\fill[black] (2,2) circle (.0001cm) node[align=left,   above]{$l_{1}\quad$};
\fill[black] (1.2,-0.5) circle (.0001cm) node[align=left,   above]{$l_{2}\quad$};
\end{tikzpicture}
\hspace{3cm}
\begin{tikzpicture}[scale=0.45]
\draw[] (0,2) -- (3,2);
\draw[->] (3,5) -- (-2,5);
\draw[] (3,2) -- (3,5);
\draw[] (0,2) -- (3,5);
\draw[->] (0,2) -- (-2,2);
\draw[->] (0,2) -- (0,0);
\draw[->] (3,2) -- (3,0);
\draw[->] (3,2) -- (5,4);
\draw[->] (3,5) -- (5,7);
\draw[dashed] (1.5,3) circle (3cm);
\fill[black] (3,2) circle (.0001cm) node[align=left,   above]{$l_{2}\quad$};
\fill[black] (3,5) circle (.0001cm) node[align=left,   above]{$l_{1}\quad$};
\end{tikzpicture}
\caption{All cases where $T$ is adjacent to two or three four edged non uniform faces along with the corresponding rearrangement $\mathcal{L}'$.}
\label{fig;four_edged}
\end{figure}
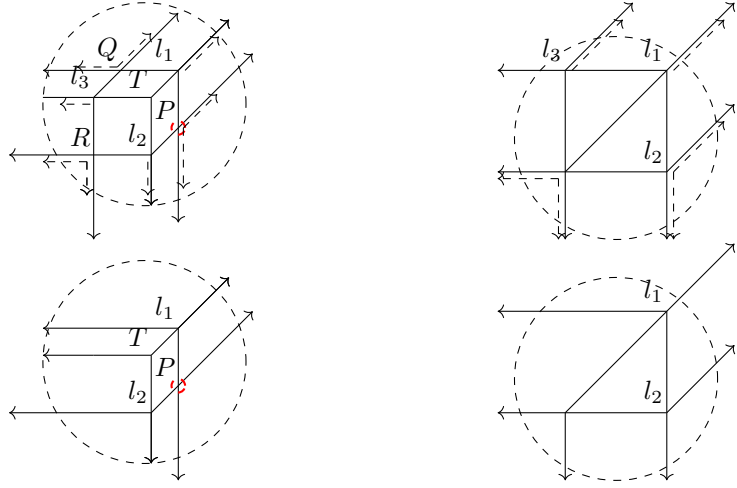

Firstly, by the definition of determined faces from the Figure \ref{fig:determined}, we realize that $T$ determines uniquely at least one non-adjacent semiuniform face dual to a stable intersection of first kind $P$ in $\mathcal{N}$. We notice that in such this case, we cannot obtain $\mathcal{N}'$ by the movement of just $l'$ and its coaxial lines since it results in an increase in the number of triangular faces. However, we observe that with a local rearrangement of $l'$ along with its neighbouring lines which are coaxial to $l'$, we can obtain $\mathcal{N}'$. When $T$ is adjacent to three or two such four edged faces, the local rearrangement is illustrated in Figure \ref{fig;four_edged}. In the first case we see that no lines can be present inside the hexagon $Pl_{1}Ql_{3}Rl_{2}$, where we abuse the notation to denote $l_{i}$ as the vertex of the line $l_{i}, i \in \{ 1,2,3 \}$, because that would contradict the adjacency of the faces dual to vertices of $l_{1},l_{2}$,$l_{3}$ and $T$. Also other lines coaxial to any of the $l_{i}$'s (if present) are depicted by dotted arrows in the figure. Essentially, one can think of this rearrangement as moving the line $l_{3}$ and $l_{2}$ along with the coaxial lines (if present) on the half rays not shared with $l'$, such that the vertices of $l_{2}$ and $l_{3}$ lie on the segments $Ql_{1}$ and $Pl_{1}$ respectively and one of the rays from each $l_{3}$ and $l_{2}$ meets the vertex of $l'$. In this way we obtain a subdivision $\mathcal{N}'$ with one less triangular face. Once again if during the rearrangement, any rays of the lines coaxial to $l'$ meet the vertex of another line, which might result in the reduction in the total number of triangular faces, we can consider a local perturbation of the vertex of such a line, along the half ray, and in this way we can prevent such a situation. A similar argument works for the remaining case in Figure \ref{fig;four_edged}. Also, we realize that the face dual to $P$ ceases to exist as we go from $\mathcal{N}$ to $\mathcal{N'}$, and this is illustrated in the Figure \ref{fig;four_edged}.

Hence, we see that in all cases for any subdivision $\mathcal{N}$ we can perform a rearrangement of $\mathcal{L}$ to $\mathcal{L}'$, to obtain a subdivision $\mathcal{N}'$ with exactly $m-1$ triangular faces. Also, we notice that as we change from $\mathcal{N}$ to $\mathcal{N'}$, there always exists a determined semiuniform face, dual to a stable intersection of first kind ($P$), which either ceases to exist in $\mathcal{N'}$ (Case (1) and (3)) or does not remain a determined semiuniform face in $\mathcal{N'}$ (Case (2)). Hence, there exists a determined semiuniform face in $\mathcal{N}$, which can never contribute to the total count of determined semiuniform faces in $\mathcal{N'}$. We now invoke the induction hypothesis for $\mathcal{N'}$ with $m-1$ triangular faces and we obtain,

\[ k \geq n(T_{1} \cup T_{2} \hdots \cup T_{m-1}) \geq m - 1 \]

Since, the face dual to $P$ cannot contribute to the $m-1$ faces determined by triangular faces present in $\mathcal{N}'$. Hence for $\mathcal{N}$, we have 

\[  n(T_{1} \cup T_{2} \hdots \cup T_{m}) \geq  n(T_{1} \cup T_{2} \hdots \cup T_{m-1}) + 1 \geq m-1 + 1 \geq m \]
     
Therefore, we realize that for all cases, given a subdivision $\mathcal{N}$ with $m$ triangular faces, 

\[   k \geq n(T_{1} \cup T_{2} \hdots \cup T_{m}) \geq m  \]

Hence, the proof.

\end{proof}

\begin{theorem}\label{thm:bound}
If $\mathcal{L}$ is a tropical line arrangement of $n$ tropical lines, then it determines at least $n - 3$ stable intersections.
\end{theorem}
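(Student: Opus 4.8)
The plan is to reduce the statement to a face count in the dual Newton subdivision $\mathcal{N}$ of $\mathcal{L}$ and then invoke Theorem~\ref{thm:induction}. By the corollary following Proposition~\ref{thm: duality}, the number of stable intersections of $\mathcal{L}$ equals the number of non-triangular faces of $\mathcal{N}$; call this number $s$, so it suffices to show $s \ge n-3$. First I would record the exact bookkeeping of the faces of $\mathcal{N}$. Let $t$ be the total number of faces, $v$ the number of triangular faces, $k$ the number of stable intersections of the first kind, and $k_{2}$ the number of stable intersections of the second kind. The faces of $\mathcal{N}$ fall into two disjoint classes: the $n$ faces dual to the vertices of the lines of $\mathcal{L}$ (pairwise distinct, since distinct tropical lines have distinct vertices, and none of these is dual to a first-kind intersection, which by definition carries no line vertex), and the $k$ faces dual to the first-kind stable intersections; hence $t = n + k$. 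Moreover every triangular face is dual to a lone line vertex, and a vertex-dual face is a triangle exactly when its line vertex is met by no other line of $\mathcal{L}$, and a non-uniform cell exactly when that vertex is a second-kind stable intersection; since each second-kind intersection lies at the vertex of a single line, this gives $v = n - k_{2}$, and therefore $s = t - v = k + k_{2}$.

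Next I would bring in Theorem~\ref{thm:induction}. Since the Newton polygon of the arrangement is a dilate of the standard triangle, it has three corners, and each corner lies in a unique face of $\mathcal{N}$, so at most three triangular faces of $\mathcal{N}$ are corner faces. Writing $m$ for the number of triangular faces of $\mathcal{N}$ other than the corner ones, we get $m \ge v - 3$. Applying Theorem~\ref{thm:induction} to these non-corner triangular faces $T_{1},\dots,T_{m}$ yields
\[ k \;\ge\; n(T_{1}\cup\cdots\cup T_{m}) \;\ge\; m \;\ge\; v - 3 \;=\; n - k_{2} - 3 . \]
Adding $k_{2}$ to the two ends gives $s = k + k_{2} \ge n - 3$, as claimed. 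The borderline cases are harmless: if $m = 0$ the middle inequalities are vacuous and the computation still goes through, and for $n \le 3$ the bound is trivial. (The case $t = n$, i.e. $k = 0$, is exactly the one already treated by Lemma~\ref{lem:lemma2}.)

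I expect the real work to lie not in this assembly but in nailing down the identity $v = n - k_{2}$ — equivalently, that every triangular face of $\mathcal{N}$ is dual to exactly one line vertex and, conversely, every line vertex that is not a stable intersection is dual to a triangle. This rests on the cell-shape dictionary of Figure~\ref{fig:cell_shape} (a cell with $w_{1}=w_{2}=w_{3}=0$ and $c=1$ is a unit triangle, while any further line through a line vertex forces some $w_{i}\ge 1$, hence a cell with at least four edges) together with excluding degenerate coincidences such as a ray of one line terminating at the vertex of another — precisely the type of local analysis already carried out in Lemma~\ref{lem:lemma2} and Theorem~\ref{thm:basecase}. The second point to handle carefully is that Theorem~\ref{thm:induction} must be fed the \emph{non-corner} triangular faces only, since a corner triangle need not determine any semiuniform face; this is exactly where the $-3$ in the bound comes from.
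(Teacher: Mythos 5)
Your proposal is correct and follows essentially the same route as the paper: it combines the duality corollary, the identity that second-kind intersections number $n-v$ (faces dual to line vertices), the fact that at most three triangles sit at corners, and Theorem~\ref{thm:induction} applied to the non-corner triangles to get $k\ge v-3$, which is exactly the paper's computation $b=(n-v)+k\ge n-q\ge n-3$ in slightly different bookkeeping. The extra care you devote to $t=n+k$ and $v=n-k_{2}$ is consistent with what the paper asserts implicitly, so there is no gap to report.
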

\begin{proof}

We try to look at all possible places where triangular faces occur in a Newton subdivision. If $v$ is the number of triangular faces present in a subdivision, then we can write $v$ as 

%\begin{figure}[H]
    %\centering
    %\includegraphics[scale=0.8]{}
    %\caption{An example depicting positions where triangular faces occur in a %Newton subdivision.}
    %\label{fig:trianglefaces}
%\end{figure}

\[ v = p + q \]

where $p$ be the number of triangular faces present in the interior of the Newton polygon, i.e., triangular faces which are adjacent to at least two or more faces in the subdivision and $q$ be the number of triangular faces present at the corners of the Newton polygon, i.e., triangular faces which are adjacent to exactly one other face in the subdivision. It is easy to see that $q \in \{0,1,2,3\}$. Then the lower bound on the number of semiuniform faces, which are determined by these triangular faces, is $p$ (by Theorem \ref{thm:induction}). Therefore if $k$ is the total number of faces corresponding to stable intersections of first kind, then    

\[ k \geq p \]

Also, the number of stable intersections of second kind $h$ = $n - v$ (since triangular faces and faces corresponding to stable intersections of second kind are contributed by vertices of lines, hence their sum is equal to $n$).

Therefore, the total number of stable intersections $b$ is given as
\[ b = n - v + k \geq n - p - q + p  = n - q \geq n - 3 \]

Hence, $b \geq n-3$.
\end{proof}

\begin{theorem}\label{thm:sharp}
Let $\mathcal{L}$ be a tropical line arrangement of $n$ tropical lines and let $\mathcal{N}$ be its dual Newton subdivision. If $\mathcal{L}$ determines $n-3$ stable intersections, then there are three triangular faces present at the corners of the Newton polygon and $\mathcal{N}$ can not have any triangular faces in the relative interior of the Newton polygon.
\end{theorem}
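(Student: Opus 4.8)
The plan is to show that for $b=n-3$ every inequality used in the proof of Theorem~\ref{thm:bound} is forced to be an equality, and then to rule out interior triangles by sharpening the count of Theorem~\ref{thm:induction}. First I would rerun that bookkeeping: write $v=p+q$ with $q\in\{0,1,2,3\}$ the number of corner triangles of $\mathcal N$, and recall $b=n-v+k$ together with $k\ge p$ from Theorem~\ref{thm:induction}, so that
\[ b \;=\; n-q+(k-p)\;\ge\; n-q\;\ge\; n-3 . \]
Then $b=n-3$ forces $(3-q)+(k-p)=0$, i.e.\ $q=3$ and $k=p$; in particular there are exactly three triangular faces at the three corners of the Newton polygon, which is the first assertion. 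It remains to deduce from $k=p$ that none of the $p$ remaining triangles lies in the relative interior of the Newton polygon — equivalently, that $\mathcal L$ is a tropical near-pencil arrangement in the sense of Definition~\ref{def:near_pencil_line}. For this I would prove the following strengthening of Theorem~\ref{thm:induction}: if among the non-corner triangles $T_1,\dots,T_m$ of a dual Newton subdivision at least one lies in the relative interior, then $n(T_1\cup\cdots\cup T_m)\ge m+2$. Granting it, since $k\ge n(T_1\cup\cdots\cup T_m)$ and $m=p$ we get $k\ge p+2>p$, contradicting $k=p$; hence there is no interior triangle, as claimed.

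The strengthened statement I would establish by the same induction on $m$ as in Theorem~\ref{thm:induction}. The base case $m=1$ is exactly Theorem~\ref{thm:triangledetermine}(1): an interior triangle determines at least three semiuniform faces, so $n(T_0)\ge 3=1+2$. For the inductive step with $m\ge2$, I would fix one interior triangle $T_0$ and choose the triangle to be eliminated to be some $T_j$ \emph{different} from $T_0$; applying the local rearrangement $\mathcal L\to\mathcal L'$ of Theorem~\ref{thm:induction} to $T_j$, one checks that $\mathcal N'$ still has $m-1$ non-corner triangles, among which $T_0$ remains interior, so the induction hypothesis gives that $T_1,\dots,T_{m-1}$ determine at least $(m-1)+2$ semiuniform faces in $\mathcal N'$; and, exactly as in Theorem~\ref{thm:induction}, the rearrangement removing $T_j$ destroys, or strips of its ``determined'' status, at least one semiuniform face of first kind which cannot reappear in $\mathcal N'$. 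Combining these two facts yields $n(T_1\cup\cdots\cup T_m)\ge (m-1)+2+1=m+2$.

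The main obstacle is the locality bookkeeping in that inductive step: one must verify that the eliminated triangle $T_j$ can always be taken distinct from $T_0$ and that the rearrangement removing it does not destroy $T_0$, does not push an edge of $T_0$ onto the boundary of the Newton polygon, and does not create new triangular faces near $T_0$. When $T_0$ is disjoint from the local picture around $T_j$ this is immediate; when $T_0$ borders that picture one re-runs the finite case analysis in the proof of Theorem~\ref{thm:induction}, using the same small-perturbation trick to avoid accidental coincidences of vertices and rays. The delicate degenerate case is when every non-corner triangle is interior and they are mutually adjacent, but with $m\ge2$ one can still select $T_j$ whose rearrangement locality avoids at least one interior triangle; and since even the weaker bound $n(T_1\cup\cdots\cup T_m)\ge m+1$ already contradicts $k=p$, there is room to spare at this step.
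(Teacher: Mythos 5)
Your first paragraph is exactly the paper's own bookkeeping: sharpness of Theorem \ref{thm:bound} forces $q=3$ and $k=p$ (the paper records this as equation \ref{eq:sharp}), and the corner statement follows. The divergence is in how you exclude interior triangles, and there your argument has a genuine gap. The paper does not prove any strengthened version of Theorem \ref{thm:induction}; instead it fixes the hypothetical interior triangle $T$ and runs an exhaustive local case analysis on how many other triangles share determined semiuniform faces with $T$ (Figures \ref{fig:Case2}--\ref{fig:Case6}), computing $n(T)$ in each configuration and, in one exceptional configuration (Figure \ref{fig:Case5_1}), arguing differently: there the contradiction is not a larger determined-face count but the existence of a parallelogram $S_{4}$ that is forced to appear yet is determined by \emph{no} triangular face, so that $k>n(T_{1}\cup\cdots\cup T_{m})$. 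This is a warning sign for your strengthened claim: your entire route goes through lower-bounding $n(T_{1}\cup\cdots\cup T_{m})$ by $m+2$ (or $m+1$), but in the Case \ref{fig:Case5_1}-type configuration the paper evidently could not extract even $m+1$ from the determined-face count and had to use the undetermined face instead; so the inequality you propose to prove by induction may simply be false there, and in any case your proof would have to detect and treat such configurations separately, which it does not.

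Independently of that, the inductive step you sketch is not established. You need the rearrangement of Theorem \ref{thm:induction}, applied to some $T_{j}\neq T_{0}$, to (i) leave $T_{0}$ as a non-corner triangle still lying in the relative interior of the Newton polygon of $\mathcal{N}'$, and (ii) guarantee that every face counted by the induction hypothesis in $\mathcal{N}'$ --- in particular the faces determined by $T_{0}$ after the move --- is also a determined semiuniform face of $\mathcal{N}$, so that the ``$+1$'' for the destroyed face $P$ is legitimate. Neither is addressed by the machinery of Theorem \ref{thm:induction}: its perturbation trick only prevents accidental extra losses of triangles along rays, and its case analysis classifies the faces adjacent to the \emph{removed} triangle, not the interaction of the move with a second distinguished triangle. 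When every other non-corner triangle sits in the immediate neighbourhood of $T_{0}$ (precisely the configurations of Figures \ref{fig:Case4_1}, \ref{fig:Case5}, \ref{fig:Case5_1}), every admissible choice of $T_{j}$ forces the rearrangement to act inside $T_{0}$'s local picture, the shapes that make a non-adjacent face ``determined'' can change, and your assertion that one can ``re-run the finite case analysis'' is exactly the unperformed work --- indeed it is essentially the case analysis the paper carries out directly. So as written the proposal does not close the second assertion of the theorem.
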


\begin{proof}
If $\mathcal{L}$ determines $n-3$ stable intersections, it is the case when the bound from Theorem \ref{thm:bound} is sharp, which happens when the following equalities hold true 

\begin{equation}\label{eq:sharp1}
q = 3    
\end{equation}

and 

\begin{equation}\label{eq:sharp}
k = n(T_{1} \cup T_{2} \hdots \cup T_{m}) = m     
\end{equation}

The first equality implies that there must be three triangular faces present at the corners of the Newton polygon.

We now assume to the contrary, that there does exist triangular faces in the relative interior. We now consider one such triangular face $T$ in the relative interior of the Newton polygon. We  consider all possible cases for $T$, where it can share faces with other triangular faces in $\mathcal{N}$,

\begin{enumerate}
    \item If $T$ does not share any semiuniform faces with any other triangular face in $\mathcal{N}$.
\end{enumerate}
By theorem \ref{thm:basecase} we know that any triangular face in the relative interior determines at least three semiuniform face. Then 

\[ k = n((T_{1} \cup T_{2} \hdots \cup T_{m-1}) \cup T) = n(T_{1} \cup T_{2} \hdots \cup T_{m-1}) + n(T) \geq m-1 + 3  = m + 2 \]

which gives a contradiction to the equation \ref{eq:sharp}.

\begin{enumerate}[resume]
    \item If $T$ shares a semiuniform face with exactly one other triangular face $T_{\alpha}$ in $\mathcal{N}$.
\end{enumerate}
 
all possible cases for $T$, upto symmetry, are listed in Figure \ref{fig:Case2}.
 
\begin{figure}[H]
\begin{tikzpicture}[scale=1.2]
\draw[][] (0,0) -- (0,-0.5);
\draw[][] (0,-0.5) -- (0.5,-0.5);
\draw[][] (0.5,-0.5) -- (0,0);
\draw[] (0,0) -- (-0.5,0);
\draw[] (-0.5,0) -- (-0.5,0.5);
\draw[] (-0.5,0.5) -- (0,0);
\draw[] (-0.5,0) -- (-0.5,-0.5);
\draw[] (-0.5,-0.5) -- (0,-0.5);
\draw[] (3,0) -- (3,-0.5);
\draw[] (3,-0.5) -- (3.5,-0.5);
\draw[] (3.5,-0.5) -- (3,0);
\draw[] (3.5,-0.5) -- (3.5,0);
\draw[] (3.5,-0.5) -- (4,-0.5);
\draw[] (4,-0.5) -- (3.5,0);
\draw[] (3.5,0) -- (3,0.5);
\draw[] (3,0) -- (3,0.5);
\draw[] (6,0) -- (6,-0.5);
\draw[] (6,-0.5) -- (6.5,-0.5);
\draw[] (6.5,-0.5) -- (6,0);
\draw[] (6,0) -- (6,0.5);
\draw[] (6,0) -- (6.5,0);
\draw[] (6,0.5) -- (6.5,0);
\draw[] (6.5,0) -- (7,-0.5);
\draw[] (6.5,-0.5) -- (7,-0.5);
\fill[black] (-0.2,-0.05) circle (.0001cm) node[align=right, above]{$T\quad$};
\fill[black] (0.35,-0.55) circle (.0001cm) node[align=right, above]{$T_{\alpha}\quad$};
\fill[black] (3.25,-0.55) circle (.0001cm) node[align=right, above]{$T\quad$};
\fill[black] (3.85,-0.55) circle (.0001cm) node[align=right, above]{$T_{\alpha}\quad$};
\fill[black] (6.25,-0.55) circle (.0001cm) node[align=right, above]{$T\quad$};
\fill[black] (6.35,-0.05) circle (.0001cm) node[align=right, above]{$T_{\alpha}\quad$};
\end{tikzpicture}
\qquad
\begin{tikzpicture}[scale=1.2]
\draw[][] (0,0) -- (0,-0.5);
\draw[][] (0,-0.5) -- (0.5,-0.5);
\draw[][] (0.5,-0.5) -- (0,0);
\draw[] (0,-0.5) -- (0,-1);
\draw[] (0.5,-0.5) -- (1,-1);
\draw[] (0,-1) -- (0.5,-1.5);
\draw[] (0.5,-1.5) -- (1,-1.5);
\draw[] (1,-1) -- (1,-1.5);
\draw[] (1,-1) -- (1.5,-1.5);
\draw[] (1,-1.5) -- (1.5,-1.5);
\fill[black] (0.3,-0.55) circle (.0001cm) node[align=right, above]{$T\quad$};
\fill[black] (1.35,-1.55) circle (.0001cm) node[align=right, above]{$T_{\alpha}\quad$};
\end{tikzpicture}
\caption{All possibilities for $T$, when it shares a semiuniform face with another triangular face}
\label{fig:Case2}
\end{figure}
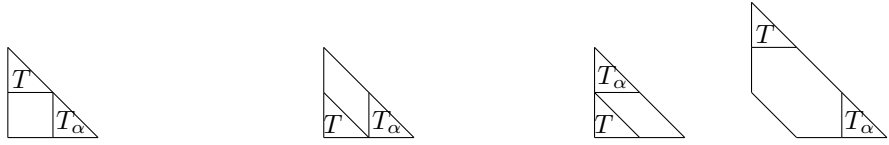

We realize that in all such cases, when we consider all possible adjacent faces to $T$, for all of them $n(T) = 4$, and none of the $m-2$ triangular faces apart from $T$ and $T_{\alpha}$, can determine the four faces determined by $T$, because that would contradict the fact that $T$ can share faces with exactly one other triangular face. Also, by Theorem \ref{thm:induction}, for the $m-2$ triangular faces apart from $T$ and $T_{\alpha}$, 

\[ n(T_{1} \cup T_{2} \hdots \cup T_{m-2}) \geq m-2   \]
 
therefore, 

\[ k = n(T_{1} \cup T_{2} \hdots \cup T_{m}) \geq n(T_{1} \cup T_{2} \hdots \cup T_{m-2}) + n(T) \geq m-2 + 4 = m+2  \]

which again gives a contradiction to the equation \ref{eq:sharp}.

We also remark that for this case and all subsequent cases, semiuniform faces which are parallelograms, and are determined by two different triangular faces, can not have edge lengths greater than one, since they share one edge, per pair of parallel edges, with a triangular face, whose edges always have unit lattice length. Hence, for all cases, the parallelogram faces are of unit lattice length. However, for hexagonal faces, edges not adjacent with triangular faces, can be of lattice length greater than one, although this does not change the count of determined faces for $T$, i.e., $n(T)$, rather it only enlarges the lengths of the edges adjacent to the hexagonal face. Hence, in our considerations, we would consider all hexagonal faces having unit lattice length.

\begin{enumerate}[resume]
    \item If $T$ shares a semiuniform face with exactly two other triangular faces $T_{\alpha}$ and $T_{\beta}$ in $\mathcal{N}$.
\end{enumerate}

all possible cases for $T$, upto symmetry, are listed in Figure \ref{fig:Case3} and Figure \ref{fig:hexagon_case}.

\begin{figure}[H]
\centering
\begin{tikzpicture}[scale=0.6]
\draw[][] (0,0) -- (0,-1);
\draw[][] (0,0) -- (1,-1);
\draw[][] (1,-1) -- (1,-2);
\draw[][] (0,-1) -- (1,-2);
\draw[][] (0,-1) -- (0,-2);
\draw[][] (0,-2) -- (1,-2);
\draw[][] (0,-2) -- (0,-3);
\draw[][] (0,-3) -- (1,-3);
\draw[][] (1,-3) -- (1,-2);
\draw[][] (1,-1) -- (2,-2);
\draw[][] (2,-2) -- (2,-3);
\draw[][] (1,-2) -- (2,-3);
\draw[][] (1,-3) -- (2,-3);
\draw[][] (2,-2) -- (3,-3);
\draw[][] (2,-3) -- (3,-3);
\fill[black] (1.5,-2.9) circle (.0001cm) node[align=right, above]{$T\quad$};
\fill[black] (0.6,-1.95) circle (.0001cm) node[align=right, above]{$T_{\alpha}\quad$};
\fill[black] (2.6,-3.1) circle (.0001cm) node[align=right, above]{$T_{\beta}\quad$};
\draw[][] (4,0) -- (4,-1);
\draw[][] (4,0) -- (5,-1);
\draw[][] (6,-2) -- (5,-2);
\draw[][] (4,-1) -- (5,-1);
\draw[][] (4,-1) -- (5,-2);
\draw[][] (4,-1) -- (4,-2);
\draw[][] (4,-2) -- (5,-2);
\draw[][] (4,-2) -- (4,-3);
\draw[][] (4,-3) -- (5,-3);
\draw[][] (5,-3) -- (5,-2);
\draw[][] (5,-1) -- (6,-2);
\draw[][] (6,-2) -- (5,-2);
\draw[][] (5,-2) -- (6,-3);
\draw[][] (5,-3) -- (6,-3);
\draw[][] (6,-2) -- (7,-3);
\draw[][] (6,-3) -- (7,-3);
\fill[black] (5.6,-3) circle (.0001cm) node[align=right, above]{$T_{\alpha}\quad$};
\fill[black] (4.5,-1.9) circle (.0001cm) node[align=right, above]{$T\quad$};
\fill[black] (4.6,-1.1) circle (.0001cm) node[align=right, above]{$T_{\beta}\quad$};
\draw[][] (8,0) -- (8,-1);
\draw[][] (8,0) -- (9,-1);
\draw[][] (10,-2) -- (9,-2);
\draw[][] (8,-1) -- (9,-1);
\draw[][] (9,-1) -- (9,-2);
\draw[][] (8,-1) -- (8,-2);
\draw[][] (8,-2) -- (9,-2);
\draw[][] (8,-2) -- (8,-3);
\draw[][] (8,-3) -- (9,-3);
\draw[][] (9,-3) -- (9,-2);
\draw[][] (9,-1) -- (10,-2);
\draw[][] (10,-2) -- (9,-2);
\draw[][] (10,-2) -- (10,-3);
\draw[][] (9,-3) -- (10,-3);
\draw[][] (10,-2) -- (11,-3);
\draw[][] (10,-3) -- (11,-3);
\fill[black] (10.6,-3) circle (.0001cm) node[align=right, above]{$T_{\alpha}\quad$};
\fill[black] (9.5,-1.9) circle (.0001cm) node[align=right, above]{$T\quad$};
\fill[black] (8.6,-1.1) circle (.0001cm) node[align=right, above]{$T_{\beta}\quad$};
\draw[][] (12,0) -- (12,-1);
\draw[][] (12,0) -- (13,-1);
\draw[][] (14,-2) -- (13,-2);
\draw[][] (12,-1) -- (13,-2);
\draw[][] (13,-1) -- (13,-2);
\draw[][] (12,-1) -- (12,-2);
\draw[][] (12,-2) -- (13,-2);
\draw[][] (12,-2) -- (12,-3);
\draw[][] (12,-3) -- (13,-3);
\draw[][] (13,-3) -- (13,-2);
\draw[][] (13,-1) -- (14,-2);
\draw[][] (14,-2) -- (13,-2);
\draw[][] (13,-2) -- (14,-3);
\draw[][] (13,-3) -- (14,-3);
\draw[][] (14,-2) -- (15,-3);
\draw[][] (14,-3) -- (15,-3);
\fill[black] (13.6,-3) circle (.0001cm) node[align=right, above]{$T_{\alpha}\quad$};
\fill[black] (13.5,-1.9) circle (.0001cm) node[align=right, above]{$T\quad$};
\fill[black] (12.6,-2.1) circle (.0001cm) node[align=right, above]{$T_{\beta}\quad$};
\draw[][] (-2,-4) -- (-2,-5);
\draw[][] (-2,-4) -- (-1,-5);
\draw[][] (-1,-5) -- (-2,-5);
\draw[][] (-2,-5) -- (-1,-6);
\draw[][] (-2,-5) -- (-2,-6);
\draw[][] (-2,-6) -- (-1,-6);
\draw[][] (-2,-6) -- (-2,-7);
\draw[][] (-2,-7) -- (-1,-7);
\draw[][] (0,-7) -- (-1,-6);
\draw[][] (-1,-5) -- (0,-6);
\draw[][] (0,-6) -- (-1,-6);
\draw[][] (-2,-6) -- (-1,-7);
\draw[][] (-2,-7) -- (0,-7);
\draw[][] (0,-6) -- (1,-7);
\draw[][] (0,-7) -- (1,-7);
\fill[black] (-1.35,-7) circle (.0001cm) node[align=right, above]{$T_{\alpha}\quad$};
\fill[black] (-1.5,-5.9) circle (.0001cm) node[align=right, above]{$T\quad$};
\fill[black] (-1.35,-5.1) circle (.0001cm) node[align=right, above]{$T_{\beta}\quad$};
\draw[][] (2,-4) -- (2,-5);
\draw[][] (2,-4) -- (3,-5);
\draw[][] (4,-6) -- (3,-6);
\draw[][] (3,-6) -- (3,-5);
\draw[][] (2,-5) -- (3,-6);
\draw[][] (2,-5) -- (2,-6);
\draw[][] (2,-6) -- (3,-6);
\draw[][] (2,-6) -- (2,-7);
\draw[][] (2,-7) -- (3,-7);
\draw[][] (3,-7) -- (2,-6);
\draw[][] (3,-5) -- (4,-6);
\draw[][] (4,-6) -- (3,-6);
\draw[][] (3,-6) -- (4,-7);
\draw[][] (4,-7) -- (4,-7);
\draw[][] (4,-6) -- (5,-7);
\draw[][] (4,-7) -- (5,-7);
\draw[][] (4,-7) -- (3,-7);
\fill[black] (2.65,-7.05) circle (.0001cm) node[align=right, above]{$T_{\alpha}\quad$};
\fill[black] (2.5,-5.9) circle (.0001cm) node[align=right, above]{$T\quad$};
\fill[black] (3.65,-6.1) circle (.0001cm) node[align=right, above]{$T_{\beta}\quad$};
\draw[][] (6,-4) -- (6,-5);
\draw[][] (6,-4) -- (7,-5);
\draw[][] (6,-5) -- (7,-6);
\draw[][] (7,-5) -- (7,-6);
\draw[][] (6,-5) -- (6,-6);
\draw[][] (6,-6) -- (7,-7);
\draw[][] (6,-6) -- (6,-7);
\draw[][] (6,-7) -- (7,-7);
\draw[][] (7,-7) -- (7,-6);
\draw[][] (7,-5) -- (8,-6);
\draw[][] (8,-7) -- (7,-6);
\draw[][] (8,-6) -- (8,-7);
\draw[][] (7,-7) -- (8,-7);
\draw[][] (8,-6) -- (9,-7);
\draw[][] (8,-7) -- (9,-7);
\fill[black] (8.6,-7) circle (.0001cm) node[align=right, above]{$T_{\alpha}\quad$};
\fill[black] (7.5,-6.9) circle (.0001cm) node[align=right, above]{$T\quad$};
\fill[black] (6.6,-7.05) circle (.0001cm) node[align=right, above]{$T_{\beta}\quad$};
\draw[][] (10,-4) -- (10,-5);
\draw[][] (10,-4) -- (11,-5);
\draw[][] (12,-6) -- (11,-6);
\draw[][] (10,-5) -- (11,-6);
\draw[][] (11,-5) -- (11,-6);
\draw[][] (10,-5) -- (10,-6);
\draw[][] (10,-6) -- (11,-7);
\draw[][] (10,-6) -- (10,-7);
\draw[][] (10,-7) -- (11,-7);
\draw[][] (11,-7) -- (11,-6);
\draw[][] (11,-5) -- (12,-6);
\draw[][] (12,-6) -- (11,-6);
\draw[][] (11,-6) -- (12,-7);
\draw[][] (11,-7) -- (12,-7);
\draw[][] (12,-6) -- (13,-7);
\draw[][] (12,-7) -- (13,-7);
\fill[black] (11.5,-6.9) circle (.0001cm) node[align=right, above]{$T\quad$};
\fill[black] (11.6,-6) circle (.0001cm) node[align=right, above]{$T_{\alpha}\quad$};
\fill[black] (10.6,-7.05) circle (.0001cm) node[align=right, above]{$T_{\beta}\quad$};
\draw[][] (14,-4) -- (14,-5);
\draw[][] (14,-4) -- (15,-5);
\draw[][] (15,-6) -- (15,-6);
\draw[][] (14,-5) -- (15,-6);
\draw[][] (14,-5) -- (14,-6);
\draw[][] (14,-6) -- (15,-6);
\draw[][] (14,-6) -- (14,-7);
\draw[][] (14,-7) -- (15,-7);
\draw[][] (16,-7) -- (16,-6);
\draw[][] (15,-5) -- (16,-6);
\draw[][] (16,-6) -- (15,-6);
\draw[][] (15,-6) -- (15,-5);
\draw[][] (15,-7) -- (16,-7);
\draw[][] (16,-6) -- (17,-7);
\draw[][] (16,-7) -- (17,-7);
\draw[][] (15,-6) -- (15,-7);
\fill[black] (16.6,-7) circle (.0001cm) node[align=right, above]{$T_{\alpha}\quad$};
\fill[black] (15.5,-5.9) circle (.0001cm) node[align=right, above]{$T\quad$};
\fill[black] (14.6,-6.1) circle (.0001cm) node[align=right, above]{$T_{\beta}\quad$};
\end{tikzpicture}
\caption{Possibilities for $T$, when it shares semiuniform faces with exactly two other triangular faces}
\label{fig:Case3}
\end{figure}
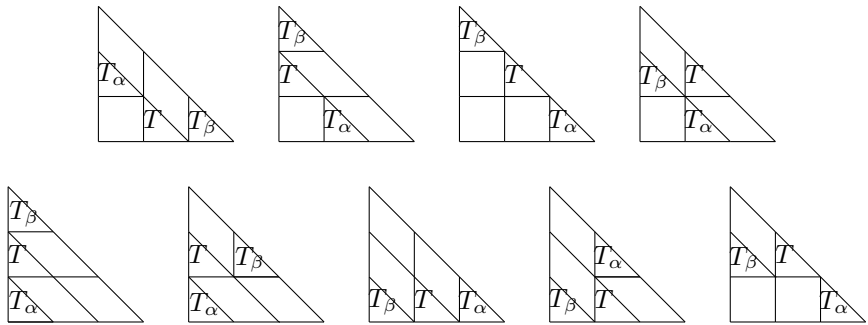

\begin{figure}[H]
\centering
\begin{tikzpicture}[scale=1.1]
\draw[][] (0,0) -- (0,-0.5);
\draw[][] (0,-0.5) -- (0.5,-0.5);
\draw[][] (0.5,-0.5) -- (0,0);
\draw[] (0,-0.5) -- (0,-1);
\draw[] (0.5,-0.5) -- (1,-1);
\draw[] (0,-1) -- (0.5,-1.5);
\draw[] (0.5,-1.5) -- (1,-1.5);
\draw[] (1,-1) -- (1,-1.5);
\draw[] (1,-1) -- (1.5,-1.5);
\draw[] (1,-1.5) -- (1.5,-1.5);
\draw[] (0,-1) -- (0,-1.5);
\draw[] (0,-1.5) -- (0.5,-1.5);
\fill[black] (0.3,-0.55) circle (.0001cm) node[align=right, above]{$T_{\beta}\quad$};
\fill[black] (0.3,-1.5) circle (.0001cm) node[align=right, above]{$T\quad$};
\fill[black] (1.3,-1.55) circle (.0001cm) node[align=right, above]{$T_{\alpha}\quad$};
\end{tikzpicture}
\qquad
\begin{tikzpicture}[scale=1.1]
\draw[][] (0,0) -- (0,-0.5);
\draw[][] (0,-0.5) -- (0.5,-0.5);
\draw[][] (0.5,-0.5) -- (0,0);
\draw[] (0,-0.5) -- (0,-1);
\draw[] (0.5,-0.5) -- (1,-1);
\draw[] (0,-1) -- (0.5,-1.5);
\draw[] (0.5,-1.5) -- (1,-1.5);
\draw[] (1,-1) -- (1,-1.5);
\draw[] (0,-1) -- (0,-1.5);
\draw[] (0,-1.5) -- (0.5,-1.5);
\draw[] (0,-1.5) -- (0,-2);
\draw[] (0,-2) -- (0.5,-2);
\draw[] (0.5,-2) -- (0.5,-1.5);
\draw[] (0.5,-1.5) -- (1,-2);
\draw[] (0.5,-2) -- (1,-2);
\fill[black] (0.3,-0.55) circle (.0001cm) node[align=right, above]{$T_{\beta}\quad$};
\fill[black] (0.25,-1.5) circle (.0001cm) node[align=right, above]{$T\quad$};
\fill[black] (0.8,-2.1) circle (.0001cm) node[align=right, above]{$T_{\alpha}\quad$};
\end{tikzpicture}
\qquad
\begin{tikzpicture}[scale=1.1]
\draw[][] (0,0) -- (0,-0.5);
\draw[][] (0,-0.5) -- (0.5,-0.5);
\draw[][] (0.5,-0.5) -- (0,0);
\draw[] (0,-0.5) -- (0,-1);
\draw[] (0.5,-0.5) -- (1,-1);
\draw[] (0,-1) -- (0.5,-1.5);
\draw[] (0.5,-1.5) -- (1,-1.5);
\draw[] (1,-1) -- (1,-1.5);
\draw[] (0,-1) -- (0,-1.5);
\draw[] (0,-1.5) -- (0.5,-1.5);
\draw[] (0,-1.5) -- (-0.5,-1.5);
\draw[] (-0.5,-1.5) -- (-0.5,-1);
\draw[] (-0.5,-1) -- (0,-1.5);
\draw[] (0,-1) -- (-0.5,-0.5);
\draw[] (-0.5,-1) -- (-0.5,-0.5);
\fill[black] (0.3,-0.55) circle (.0001cm) node[align=right, above]{$T_{\beta}\quad$};
\fill[black] (0.25,-1.5) circle (.0001cm) node[align=right, above]{$T\quad$};
\fill[black] (-0.2,-1.55) circle (.0001cm) node[align=right, above]{$T_{\alpha}\quad$};
\end{tikzpicture}
\qquad
\begin{tikzpicture}[scale=1.1]
\draw[][] (0,0) -- (0,-0.5);
\draw[][] (0,-0.5) -- (0.5,-0.5);
\draw[][] (0.5,-0.5) -- (0,0);
\draw[] (0,-0.5) -- (0,-1);
\draw[] (0.5,-0.5) -- (1,-1);
\draw[] (0,-1) -- (0.5,-1.5);
\draw[] (0.5,-1.5) -- (1,-1.5);
\draw[] (1,-1) -- (1,-1.5);
\draw[] (0,-1) -- (0,-1.5);
\draw[] (0,-1.5) -- (0.5,-1.5);
\draw[] (0,-1.5) -- (0,-2);
\draw[] (0,-2) -- (0.5,-2);
\draw[] (0.5,-2) -- (0,-1.5);
\draw[] (0.5,-2) -- (1,-2);
\draw[] (1,-2) -- (0.5,-1.5);
\fill[black] (0.3,-0.55) circle (.0001cm) node[align=right, above]{$T_{\beta}\quad$};
\fill[black] (0.25,-1.5) circle (.0001cm) node[align=right, above]{$T\quad$};
\fill[black] (0.3,-2.05) circle (.0001cm) node[align=right, above]{$T_{\alpha}\quad$};
\end{tikzpicture}
\caption{The case when $T$ shares semiuniform with two other triangular faces, with one of the determined faces being a hexagon}
\label{fig:hexagon_case}
\end{figure}

We realize that in all cases in Figure \ref{fig:Case3}, when we consider all possible adjacent faces for $T$, $n(T) = 5$, and for the first case in Figure \ref{fig:hexagon_case}, $ n(T) = 4$, while for all others in Figure \ref{fig:hexagon_case}, $n(T) = 5$. Also none of the $m-3$ triangular faces apart from $T$, $T_{\alpha}$ and $T_{\beta}$, can determine the faces determined by $T$, because that would contradict the fact that $T$ can share faces with only two other triangular faces. By Theorem \ref{thm:induction}, for the $m-3$ triangular faces apart from $T$, $T_{\alpha}$ and $T_{\beta}$, we have 

\[ n(T_{1} \cup T_{2} \hdots \cup T_{m-3}) \geq m-3   \]

therefore, 

\[ k = n(T_{1} \cup T_{2} \hdots \cup T_{m}) \geq n(T_{1} \cup T_{2} \hdots \cup T_{m-3}) + n(T) \geq m-3 + 4 = m+1  \]

which again gives a contradiction to the equation \ref{eq:sharp}.

\begin{enumerate}[resume]
    \item If $T$ shares a semiuniform face with exactly three other triangular faces $T_{\alpha}$, $T_{\beta}$ and $T_{\gamma}$ in $\mathcal{N}$.
\end{enumerate}

all possible cases for $T$, upto symmetry, are listed in Figure \ref{fig:Case4_1}, Figure \ref{fig:Case4} and Figure \ref{fig:hexagon_case1}.

\begin{center}
\begin{figure}
\begin{tikzpicture}[scale=0.6]
\draw[][] (0,0) -- (0,-1);
\draw[][] (0,0) -- (1,-1);
\draw[][] (1,-1) -- (1,-2);
\draw[][] (0,-1) -- (1,-2);
\draw[][] (0,-1) -- (0,-2);
\draw[][] (0,-2) -- (1,-2);
\draw[][] (0,-2) -- (0,-3);
\draw[][] (0,-3) -- (1,-3);
\draw[][] (1,-3) -- (1,-2);
\draw[][] (0,-3) -- (0,-4);
\draw[][] (0,-4) -- (1,-4);
\draw[][] (1,-4) -- (1,-3);
\draw[][] (1,-1) -- (2,-2);
\draw[][] (2,-2) -- (2,-3);
\draw[][] (1,-2) -- (2,-3);
\draw[][] (1,-3) -- (2,-3);
\draw[][] (1,-4) -- (2,-4);
\draw[][] (2,-3) -- (2,-4);
\draw[][] (2,-2) -- (3,-3);
\draw[][] (2,-3) -- (3,-3);
\draw[][] (2,-3) -- (3,-4);
\draw[][] (2,-4) -- (3,-4);
\draw[][] (3,-3) -- (4,-4);
\draw[][] (3,-4) -- (4,-4);
\fill[black] (1.5,-2.9) circle (.0001cm) node[align=right, above]{$T\quad$};
\fill[black] (0.55,-2) circle (.0001cm) node[align=right, above]{$T_{\alpha}\quad$};
\fill[black] (2.55,-3.05) circle (.0001cm) node[align=right, above]{$T_{\beta}\quad$};
\fill[black] (2.55,-4.05) circle (.0001cm) node[align=right, above]{$T_{\gamma}\quad$};
\draw[][] (6,0) -- (6,-1);
\draw[][] (6,0) -- (7,-1);
\draw[][] (7,-1) -- (7,-2);
\draw[][] (6,-1) -- (7,-2);
\draw[][] (6,-1) -- (6,-2);
\draw[][] (6,-2) -- (7,-2);
\draw[][] (6,-2) -- (6,-3);
\draw[][] (6,-3) -- (7,-3);
\draw[][] (7,-3) -- (7,-2);
\draw[][] (6,-3) -- (6,-4);
\draw[][] (6,-4) -- (7,-4);
\draw[][] (7,-4) -- (7,-3);
\draw[][] (7,-3) -- (8,-4);
\draw[][] (7,-1) -- (8,-2);
\draw[][] (8,-2) -- (8,-3);
\draw[][] (7,-2) -- (8,-3);
\draw[][] (7,-3) -- (8,-3);
\draw[][] (7,-4) -- (8,-4);
\draw[][] (8,-2) -- (9,-3);
\draw[][] (8,-3) -- (9,-3);
\draw[][] (8,-3) -- (9,-4);
\draw[][] (8,-4) -- (9,-4);
\draw[][] (9,-3) -- (10,-4);
\draw[][] (9,-4) -- (10,-4);
\fill[black] (7.5,-2.9) circle (.0001cm) node[align=right, above]{$T\quad$};
\fill[black] (6.55,-2.05) circle (.0001cm) node[align=right, above]{$T_{\alpha}\quad$};
\fill[black] (8.55,-3.05) circle (.0001cm) node[align=right, above]{$T_{\beta}\quad$};
\fill[black] (7.55,-4.05) circle (.0001cm) node[align=right, above]{$T_{\gamma}\quad$};
\draw[][] (12,0) -- (12,-1);
\draw[][] (12,0) -- (13,-1);
\draw[][] (13,-1) -- (13,-2);
\draw[][] (12,-1) -- (13,-2);
\draw[][] (12,-1) -- (12,-2);
\draw[][] (12,-2) -- (13,-3);
\draw[][] (12,-2) -- (12,-3);
\draw[][] (12,-3) -- (13,-3);
\draw[][] (13,-3) -- (13,-2);
\draw[][] (12,-3) -- (12,-4);
\draw[][] (12,-4) -- (13,-4);
\draw[][] (13,-4) -- (13,-3);
\draw[][] (13,-3) -- (14,-4);
\draw[][] (13,-1) -- (14,-2);
\draw[][] (14,-2) -- (13,-2);
\draw[][] (13,-2) -- (14,-3);
\draw[][] (13,-3) -- (14,-3);
\draw[][] (13,-4) -- (14,-4);
\draw[][] (14,-2) -- (15,-3);
\draw[][] (14,-3) -- (15,-3);
\draw[][] (14,-3) -- (15,-4);
\draw[][] (14,-4) -- (15,-4);
\draw[][] (15,-3) -- (16,-4);
\draw[][] (15,-4) -- (16,-4);
\fill[black] (13.5,-2.9) circle (.0001cm) node[align=right, above]{$T\quad$};
\fill[black] (12.55,-3) circle (.0001cm) node[align=right, above]{$T_{\alpha}\quad$};
\fill[black] (13.55,-2.05) circle (.0001cm) node[align=right, above]{$T_{\beta}\quad$};
\fill[black] (13.55,-4.05) circle (.0001cm) node[align=right, above]{$T_{\gamma}\quad$};
\draw[][] (0,-6) -- (0,-7);
\draw[][] (0,-6) -- (1,-7);
\draw[][] (1,-7) -- (1,-8);
\draw[][] (0,-7) -- (1,-8);
\draw[][] (0,-7) -- (0,-8);
\draw[][] (0,-8) -- (1,-9);
\draw[][] (0,-8) -- (0,-9);
\draw[][] (0,-9) -- (1,-9);
\draw[][] (1,-8) -- (1,-9);
\draw[][] (0,-9) -- (0,-10);
\draw[][] (0,-10) -- (1,-10);
\draw[][] (1,-10) -- (1,-9);
\draw[][] (1,-7) -- (2,-8);
\draw[][] (2,-8) -- (2,-9);
\draw[][] (1,-8) -- (2,-9);
\draw[][] (1,-9) -- (2,-9);
\draw[][] (1,-10) -- (2,-10);
\draw[][] (2,-9) -- (2,-10);
\draw[][] (2,-8) -- (3,-9);
\draw[][] (2,-9) -- (3,-9);
\draw[][] (2,-9) -- (3,-10);
\draw[][] (2,-10) -- (3,-10);
\draw[][] (3,-9) -- (4,-10);
\draw[][] (3,-10) -- (4,-10);
\fill[black] (1.5,-8.9) circle (.0001cm) node[align=right, above]{$T\quad$};
\fill[black] (0.55,-9.05) circle (.0001cm) node[align=right, above]{$T_{\alpha}\quad$};
\fill[black] (2.55,-9.05) circle (.0001cm) node[align=right, above]{$T_{\beta}\quad$};
\fill[black] (2.55,-10.05) circle (.0001cm) node[align=right, above]{$T_{\gamma}\quad$};
\draw[][] (6,-6) -- (6,-7);
\draw[][] (6,-6) -- (7,-7);
\draw[][] (7,-7) -- (7,-8);
\draw[][] (6,-7) -- (7,-8);
\draw[][] (6,-7) -- (6,-8);
\draw[][] (6,-8) -- (7,-9);
\draw[][] (6,-8) -- (6,-9);
\draw[][] (6,-9) -- (7,-9);
\draw[][] (7,-9) -- (7,-8);
\draw[][] (6,-9) -- (6,-10);
\draw[][] (6,-10) -- (7,-10);
\draw[][] (7,-10) -- (7,-9);
\draw[][] (7,-9) -- (8,-10);
\draw[][] (7,-7) -- (8,-8);
\draw[][] (8,-8) -- (8,-9);
\draw[][] (7,-8) -- (8,-9);
\draw[][] (7,-9) -- (8,-9);
\draw[][] (7,-10) -- (8,-10);
\draw[][] (8,-8) -- (9,-9);
\draw[][] (8,-9) -- (9,-9);
\draw[][] (8,-9) -- (9,-10);
\draw[][] (8,-10) -- (9,-10);
\draw[][] (9,-9) -- (10,-10);
\draw[][] (9,-10) -- (10,-10);
\fill[black] (7.5,-8.9) circle (.0001cm) node[align=right, above]{$T\quad$};
\fill[black] (6.55,-9.05) circle (.0001cm) node[align=right, above]{$T_{\alpha}\quad$};
\fill[black] (8.55,-9.05) circle (.0001cm) node[align=right, above]{$T_{\beta}\quad$};
\fill[black] (7.55,-10.05) circle (.0001cm) node[align=right, above]{$T_{\gamma}\quad$};
\draw[][] (12,-6) -- (12,-7);
\draw[][] (12,-6) -- (13,-7);
\draw[][] (13,-7) -- (13,-8);
\draw[][] (12,-7) -- (13,-8);
\draw[][] (12,-7) -- (12,-8);
\draw[][] (12,-8) -- (13,-8);
\draw[][] (12,-8) -- (12,-9);
\draw[][] (12,-9) -- (13,-9);
\draw[][] (13,-9) -- (13,-8);
\draw[][] (12,-9) -- (12,-10);
\draw[][] (12,-10) -- (13,-10);
\draw[][] (13,-10) -- (13,-9);
\draw[][] (13,-9) -- (14,-10);
\draw[][] (13,-7) -- (14,-8);
\draw[][] (14,-8) -- (13,-8);
\draw[][] (13,-8) -- (14,-9);
\draw[][] (13,-9) -- (14,-9);
\draw[][] (13,-10) -- (14,-10);
\draw[][] (14,-8) -- (15,-9);
\draw[][] (14,-9) -- (15,-9);
\draw[][] (14,-9) -- (15,-10);
\draw[][] (14,-10) -- (15,-10);
\draw[][] (15,-9) -- (16,-10);
\draw[][] (15,-10) -- (16,-10);
\fill[black] (13.5,-8.9) circle (.0001cm) node[align=right, above]{$T\quad$};
\fill[black] (12.55,-8.05) circle (.0001cm) node[align=right, above]{$T_{\alpha}\quad$};
\fill[black] (13.55,-8.05) circle (.0001cm) node[align=right, above]{$T_{\beta}\quad$};
\fill[black] (13.55,-10.05) circle (.0001cm) node[align=right, above]{$T_{\gamma}\quad$};
\end{tikzpicture}
\caption{Possibilities for $T$, when it shares semiuniform faces with three other triangular faces}
\label{fig:Case4_1}
\end{figure}
\end{center}

\begin{center}
\begin{figure}
\begin{tikzpicture}[scale=0.6]
\draw[][] (0,0) -- (0,-1);
\draw[][] (0,0) -- (1,-1);
\draw[][] (1,-1) -- (1,-2);
\draw[][] (0,-1) -- (1,-2);
\draw[][] (0,-1) -- (0,-2);
\draw[][] (0,-2) -- (1,-2);
\draw[][] (0,-2) -- (0,-3);
\draw[][] (0,-3) -- (1,-3);
\draw[][] (1,-3) -- (1,-2);
\draw[][] (1,-1) -- (2,-2);
\draw[][] (2,-2) -- (2,-3);
\draw[][] (1,-2) -- (2,-3);
\draw[][] (1,-3) -- (2,-3);
\draw[][] (2,-2) -- (3,-3);
\draw[][] (2,-3) -- (3,-3);
\draw[][] (1,-3) -- (1,-4);
\draw[][] (1,-4) -- (2,-5);
\draw[][] (2,-5) -- (3,-5);
\draw[][] (3,-5) -- (3,-4);
\draw[][] (3,-4) -- (2,-3);
\draw[][] (3,-5) -- (4,-5);
\draw[][] (4,-5) -- (3,-4);
\fill[black] (1.5,-2.9) circle (.0001cm) node[align=right, above]{$T\quad$};
\fill[black] (0.55,-2) circle (.0001cm) node[align=right, above]{$T_{\alpha}\quad$};
\fill[black] (2.55,-3) circle (.0001cm) node[align=right, above]{$T_{\beta}\quad$};
\fill[black] (3.55,-5.05) circle (.0001cm) node[align=right, above]{$T_{\gamma}\quad$};
\end{tikzpicture}
\qquad
\begin{tikzpicture}[scale=0.6]
\draw[][] (4,0) -- (4,-1);
\draw[][] (4,0) -- (5,-1);
\draw[][] (6,-2) -- (5,-2);
\draw[][] (4,-1) -- (5,-1);
\draw[][] (4,-1) -- (5,-2);
\draw[][] (4,-1) -- (4,-2);
\draw[][] (4,-2) -- (5,-2);
\draw[][] (4,-2) -- (4,-3);
\draw[][] (4,-3) -- (5,-3);
\draw[][] (5,-3) -- (5,-2);
\draw[][] (5,-1) -- (6,-2);
\draw[][] (6,-2) -- (5,-2);
\draw[][] (5,-2) -- (6,-3);
\draw[][] (5,-3) -- (6,-3);
\draw[][] (6,-2) -- (7,-3);
\draw[][] (6,-3) -- (7,-3);
\draw[][] (4,-1) -- (3,0);
\draw[][] (3,0) -- (2,0);
\draw[][] (2,0) -- (2,-1);
\draw[][] (2,-1) -- (3,-2);
\draw[][] (3,-2) -- (4,-2);
\draw[][] (2,-1) -- (2,-2);
\draw[][] (2,-2) -- (3,-2);
\fill[black] (5.55,-3) circle (.0001cm) node[align=right, above]{$T_{\alpha}\quad$};
\fill[black] (4.5,-1.9) circle (.0001cm) node[align=right, above]{$T\quad$};
\fill[black] (4.55,-1) circle (.0001cm) node[align=right, above]{$T_{\beta}\quad$};
\fill[black] (2.55,-2.05) circle (.0001cm) node[align=right, above]{$T_{\gamma}\quad$};
\end{tikzpicture}
\qquad
\begin{tikzpicture}[scale=0.6]
\draw[][] (8,0) -- (8,-1);
\draw[][] (8,0) -- (9,-1);
\draw[][] (10,-2) -- (9,-2);
\draw[][] (8,-1) -- (9,-1);
\draw[][] (9,-1) -- (9,-2);
\draw[][] (8,-1) -- (8,-2);
\draw[][] (8,-2) -- (9,-2);
\draw[][] (8,-2) -- (8,-3);
\draw[][] (8,-3) -- (9,-3);
\draw[][] (9,-3) -- (9,-2);
\draw[][] (9,-1) -- (10,-2);
\draw[][] (10,-2) -- (9,-2);
\draw[][] (10,-2) -- (10,-3);
\draw[][] (9,-3) -- (10,-3);
\draw[][] (10,-2) -- (11,-3);
\draw[][] (10,-3) -- (11,-3);
\draw[][] (9,-1) -- (9,0);
\draw[][] (9,0) -- (10,0);
\draw[][] (10,0) -- (11,-1);
\draw[][] (11,-1) -- (11,-2);
\draw[][] (11,-2) -- (10,-2);
\draw[][] (11,-1) -- (12,-2);
\draw[][] (11,-2) -- (12,-2);
\fill[black] (10.55,-3) circle (.0001cm) node[align=right, above]{$T_{\alpha}\quad$};
\fill[black] (9.5,-1.9) circle (.0001cm) node[align=right, above]{$T\quad$};
\fill[black] (8.55,-1.05) circle (.0001cm) node[align=right, above]{$T_{\beta}\quad$};
\fill[black] (11.55,-2.05) circle (.0001cm) node[align=right, above]{$T_{\gamma}\quad$};
\end{tikzpicture}
\qquad
\begin{tikzpicture}[scale=0.6]
\draw[][] (12,0) -- (12,-1);
\draw[][] (12,0) -- (13,-1);
\draw[][] (14,-2) -- (13,-2);
\draw[][] (12,-1) -- (13,-2);
\draw[][] (13,-1) -- (13,-2);
\draw[][] (12,-1) -- (12,-2);
\draw[][] (12,-2) -- (13,-2);
\draw[][] (12,-2) -- (12,-3);
\draw[][] (12,-3) -- (13,-3);
\draw[][] (13,-3) -- (13,-2);
\draw[][] (13,-1) -- (14,-2);
\draw[][] (14,-2) -- (13,-2);
\draw[][] (13,-2) -- (14,-3);
\draw[][] (13,-3) -- (14,-3);
\draw[][] (14,-2) -- (15,-3);
\draw[][] (14,-3) -- (15,-3);
\draw[][] (13,-1) -- (13,0);
\draw[][] (13,0) -- (14,0);
\draw[][] (14,0) -- (15,-1);
\draw[][] (15,-1) -- (15,-2);
\draw[][] (14,-2) -- (15,-2);
\draw[][] (15,-2) -- (16,-2);
\draw[][] (15,-1) -- (16,-2);
\fill[black] (13.55,-3.05) circle (.0001cm) node[align=right, above]{$T_{\alpha}\quad$};
\fill[black] (13.5,-1.9) circle (.0001cm) node[align=right, above]{$T\quad$};
\fill[black] (12.55,-2.05) circle (.0001cm) node[align=right, above]{$T_{\beta}\quad$};
\fill[black] (15.55,-2.05) circle (.0001cm) node[align=right, above]{$T_{\gamma}\quad$};
\end{tikzpicture}
\qquad
\begin{tikzpicture}[scale=0.6]
\draw[][] (0,-4) -- (0,-5);
\draw[][] (0,-4) -- (1,-5);
\draw[][] (1,-5) -- (0,-5);
\draw[][] (0,-5) -- (1,-6);
\draw[][] (0,-5) -- (0,-6);
\draw[][] (0,-6) -- (1,-6);
\draw[][] (0,-6) -- (0,-7);
\draw[][] (0,-7) -- (1,-7);
\draw[][] (2,-7) -- (1,-6);
\draw[][] (1,-5) -- (2,-6);
\draw[][] (2,-6) -- (1,-6);
\draw[][] (0,-6) -- (1,-7);
\draw[][] (1,-7) -- (2,-7);
\draw[][] (2,-6) -- (3,-7);
\draw[][] (2,-7) -- (3,-7);
\draw[][] (0,-5) -- (-1,-4);
\draw[][] (-1,-4) -- (-2,-4);
\draw[][] (-2,-4) -- (-2,-5);
\draw[][] (-2,-5) -- (-1,-6);
\draw[][] (-2,-5) -- (-2,-6);
\draw[][] (-2,-6) -- (-1,-6);
\draw[][] (-1,-6) -- (0,-6);
\fill[black] (0.55,-7.05) circle (.0001cm) node[align=right, above]{$T_{\alpha}\quad$};
\fill[black] (0.5,-5.9) circle (.0001cm) node[align=right, above]{$T\quad$};
\fill[black] (0.55,-5.05) circle (.0001cm) node[align=right, above]{$T_{\beta}\quad$};
\fill[black] (-1.45,-6.05) circle (.0001cm) node[align=right, above]{$T_{\gamma}\quad$};
\end{tikzpicture}
\qquad
\begin{tikzpicture}[scale=0.6]
\draw[][] (4,-4) -- (4,-5);
\draw[][] (4,-4) -- (5,-5);
\draw[][] (6,-6) -- (5,-6);
\draw[][] (5,-6) -- (5,-5);
\draw[][] (4,-5) -- (5,-6);
\draw[][] (4,-5) -- (4,-6);
\draw[][] (4,-6) -- (5,-6);
\draw[][] (4,-6) -- (4,-7);
\draw[][] (4,-7) -- (5,-7);
\draw[][] (5,-7) -- (4,-6);
\draw[][] (5,-5) -- (6,-6);
\draw[][] (6,-6) -- (5,-6);
\draw[][] (5,-6) -- (6,-7);
\draw[][] (5,-7) -- (6,-7);
\draw[][] (6,-6) -- (7,-7);
\draw[][] (6,-7) -- (7,-7);
\draw[][] (4,-5) -- (3,-4);
\draw[][] (3,-4) -- (2,-4);
\draw[][] (2,-4) -- (2,-5);
\draw[][] (2,-5) -- (3,-6);
\draw[][] (3,-6) -- (4,-6);
\draw[][] (2,-5) -- (2,-6);
\draw[][] (2,-6) -- (3,-6);
\fill[black] (4.55,-7.05) circle (.0001cm) node[align=right, above]{$T_{\alpha}\quad$};
\fill[black] (4.5,-5.9) circle (.0001cm) node[align=right, above]{$T\quad$};
\fill[black] (5.55,-6.05) circle (.0001cm) node[align=right, above]{$T_{\beta}\quad$};
\fill[black] (2.55,-6) circle (.0001cm) node[align=right, above]{$T_{\gamma}\quad$};
\end{tikzpicture}
\qquad
\begin{tikzpicture}[scale=0.6]
\draw[][] (8,-4) -- (8,-5);
\draw[][] (8,-4) -- (9,-5);
\draw[][] (8,-5) -- (9,-6);
\draw[][] (9,-5) -- (9,-6);
\draw[][] (8,-5) -- (8,-6);
\draw[][] (8,-6) -- (9,-7);
\draw[][] (8,-6) -- (8,-7);
\draw[][] (8,-7) -- (9,-7);
\draw[][] (9,-7) -- (9,-6);
\draw[][] (9,-5) -- (10,-6);
\draw[][] (10,-7) -- (9,-6);
\draw[][] (10,-6) -- (10,-7);
\draw[][] (9,-7) -- (10,-7);
\draw[][] (10,-6) -- (11,-7);
\draw[][] (10,-7) -- (11,-7);
\draw[][] (9,-7) -- (9,-8);
\draw[][] (9,-8) -- (10,-9);
\draw[][] (10,-9) -- (11,-9);
\draw[][] (11,-9) -- (11,-8);
\draw[][] (11,-8) -- (10,-7);
\draw[][] (9,-8) -- (9,-9);
\draw[][] (9,-9) -- (10,-9);
\fill[black] (10.55,-7.05) circle (.0001cm) node[align=right, above]{$T_{\alpha}\quad$};
\fill[black] (9.5,-6.9) circle (.0001cm) node[align=right, above]{$T\quad$};
\fill[black] (8.55,-7.05) circle (.0001cm) node[align=right, above]{$T_{\beta}\quad$};
\fill[black] (9.55,-9.05) circle (.0001cm) node[align=right, above]{$T_{\gamma}\quad$};
\end{tikzpicture}
\vspace{1cm}
\begin{tikzpicture}[scale=0.6]
\draw[][] (12,-4) -- (12,-5);
\draw[][] (12,-4) -- (13,-5);
\draw[][] (14,-6) -- (13,-6);
\draw[][] (12,-5) -- (13,-6);
\draw[][] (13,-5) -- (13,-6);
\draw[][] (12,-5) -- (12,-6);
\draw[][] (12,-6) -- (13,-7);
\draw[][] (12,-6) -- (12,-7);
\draw[][] (12,-7) -- (13,-7);
\draw[][] (13,-7) -- (13,-6);
\draw[][] (13,-5) -- (14,-6);
\draw[][] (14,-6) -- (13,-6);
\draw[][] (13,-6) -- (14,-7);
\draw[][] (13,-7) -- (14,-7);
\draw[][] (14,-6) -- (15,-7);
\draw[][] (14,-7) -- (15,-7);
\draw[][] (13,-7) -- (13,-8);
\draw[][] (13,-8) -- (14,-9);
\draw[][] (14,-9) -- (15,-9);
\draw[][] (15,-9) -- (15,-8);
\draw[][] (15,-8) -- (14,-7);
\draw[][] (13,-8) -- (13,-9);
\draw[][] (13,-9) -- (14,-9);
\fill[black] (13.5,-6.9) circle (.0001cm) node[align=right, above]{$T\quad$};
\fill[black] (13.55,-6.05) circle (.0001cm) node[align=right, above]{$T_{\alpha}\quad$};
\fill[black] (12.55,-7.05) circle (.0001cm) node[align=right, above]{$T_{\beta}\quad$};
\fill[black] (13.55,-9.05) circle (.0001cm) node[align=right, above]{$T_{\gamma}\quad$};
\end{tikzpicture}
\qquad
\begin{tikzpicture}[scale=0.6]
\draw[][] (4,-10) -- (4,-11);
\draw[][] (4,-10) -- (5,-11);
\draw[][] (6,-12) -- (5,-12);
\draw[][] (5,-12) -- (5,-11);
\draw[][] (4,-11) -- (5,-12);
\draw[][] (4,-11) -- (4,-12);
\draw[][] (4,-12) -- (5,-12);
\draw[][] (4,-12) -- (4,-13);
\draw[][] (4,-13) -- (5,-13);
\draw[][] (5,-13) -- (5,-12);
\draw[][] (5,-11) -- (6,-12);
\draw[][] (6,-12) -- (5,-12);
\draw[][] (6,-12) -- (6,-13);
\draw[][] (5,-13) -- (6,-13);
\draw[][] (6,-12) -- (7,-13);
\draw[][] (6,-13) -- (7,-13);
\draw[][] (5,-11) -- (5,-10);
\draw[][] (5,-10) -- (6,-10);
\draw[][] (6,-10) -- (7,-11);
\draw[][] (7,-11) -- (7,-12);
\draw[][] (7,-12) -- (6,-12);
\draw[][] (7,-12) -- (8,-12);
\draw[][] (8,-12) -- (7,-11);
\fill[black] (6.55,-13.05) circle (.0001cm) node[align=right, above]{$T_{\alpha}\quad$};
\fill[black] (4.55,-12.05) circle (.0001cm) node[align=right, above]{$T_{\beta}\quad$};
\fill[black] (5.5,-11.9) circle (.0001cm) node[align=right, above]{$T\quad$};
\fill[black] (7.55,-12.05) circle (.0001cm) node[align=right, above]{$T_{\gamma}\quad$};
\end{tikzpicture}
\caption{Possibilities for $T$, when it shares semiuniform faces with three another triangular faces, involving a hexagonal face which $T$ shares with one other triangular face}
\label{fig:Case4}
\end{figure}
\end{center}

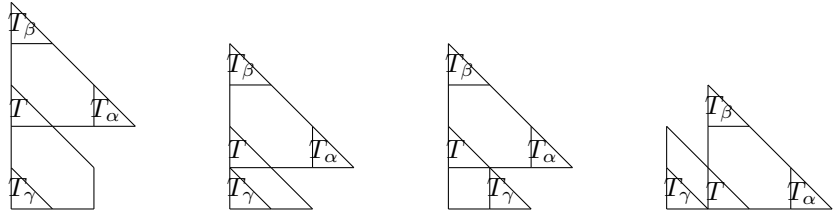
\begin{figure}
\centering
\begin{tikzpicture}[scale=1.1]
\draw[][] (0,0) -- (0,-0.5);
\draw[][] (0,-0.5) -- (0.5,-0.5);
\draw[][] (0.5,-0.5) -- (0,0);
\draw[] (0,-0.5) -- (0,-1);
\draw[] (0.5,-0.5) -- (1,-1);
\draw[] (0,-1) -- (0.5,-1.5);
\draw[] (0.5,-1.5) -- (1,-1.5);
\draw[] (1,-1) -- (1,-1.5);
\draw[] (1,-1) -- (1.5,-1.5);
\draw[] (1,-1.5) -- (1.5,-1.5);
\draw[] (0,-1) -- (0,-1.5);
\draw[] (0,-1.5) -- (0.5,-1.5);
\draw[] (0,-1.5) -- (0,-2);
\draw[] (0,-2) -- (0.5,-2.5);
\draw[] (0.5,-2.5) -- (1,-2.5);
\draw[] (1,-2.5) -- (1,-2);
\draw[] (1,-2) -- (0.5,-1.5);
\draw[] (0,-2) -- (0,-2.5);
\draw[] (0,-2.5) -- (0.5,-2.5);
\fill[black] (0.3,-0.55) circle (.0001cm) node[align=right, above]{$T_{\beta}\quad$};
\fill[black] (0.25,-1.5) circle (.0001cm) node[align=right, above]{$T\quad$};
\fill[black] (0.3,-2.55) circle (.0001cm) node[align=right, above]{$T_{\gamma}\quad$};
\fill[black] (1.3,-1.55) circle (.0001cm) node[align=right, above]{$T_{\alpha}\quad$};
\end{tikzpicture}
\qquad
\begin{tikzpicture}[scale=1.1]
\draw[][] (0,0) -- (0,-0.5);
\draw[][] (0,-0.5) -- (0.5,-0.5);
\draw[][] (0.5,-0.5) -- (0,0);
\draw[] (0,-0.5) -- (0,-1);
\draw[] (0.5,-0.5) -- (1,-1);
\draw[] (0,-1) -- (0.5,-1.5);
\draw[] (0.5,-1.5) -- (1,-1.5);
\draw[] (1,-1) -- (1,-1.5);
\draw[] (1,-1) -- (1.5,-1.5);
\draw[] (1,-1.5) -- (1.5,-1.5);
\draw[] (0,-1) -- (0,-1.5);
\draw[] (0,-1.5) -- (0.5,-1.5);
\draw[] (0,-1.5) -- (0,-2);
\draw[] (0,-2) -- (0.5,-2);
\draw[] (0.5,-2) -- (0,-1.5);
\draw[] (0.5,-1.5) -- (1,-2);
\draw[] (0.5,-2) -- (1,-2);
\fill[black] (0.3,-0.55) circle (.0001cm) node[align=right, above]{$T_{\beta}\quad$};
\fill[black] (0.25,-1.5) circle (.0001cm) node[align=right, above]{$T\quad$};
\fill[black] (0.3,-2.05) circle (.0001cm) node[align=right, above]{$T_{\gamma}\quad$};
\fill[black] (1.3,-1.55) circle (.0001cm) node[align=right, above]{$T_{\alpha}\quad$};
\end{tikzpicture}
\qquad
\begin{tikzpicture}[scale=1.1]
\draw[][] (0,0) -- (0,-0.5);
\draw[][] (0,-0.5) -- (0.5,-0.5);
\draw[][] (0.5,-0.5) -- (0,0);
\draw[] (0,-0.5) -- (0,-1);
\draw[] (0.5,-0.5) -- (1,-1);
\draw[] (0,-1) -- (0.5,-1.5);
\draw[] (0.5,-1.5) -- (1,-1.5);
\draw[] (1,-1) -- (1,-1.5);
\draw[] (1,-1) -- (1.5,-1.5);
\draw[] (1,-1.5) -- (1.5,-1.5);
\draw[] (0,-1) -- (0,-1.5);
\draw[] (0,-1.5) -- (0.5,-1.5);
\draw[] (0,-1.5) -- (0,-2);
\draw[] (0,-2) -- (0.5,-2);
\draw[] (0.5,-2) -- (0.5,-1.5);
\draw[] (0.5,-1.5) -- (1,-2);
\draw[] (0.5,-2) -- (1,-2);
\fill[black] (0.3,-0.55) circle (.0001cm) node[align=right, above]{$T_{\beta}\quad$};
\fill[black] (0.25,-1.5) circle (.0001cm) node[align=right, above]{$T\quad$};
\fill[black] (0.8,-2.05) circle (.0001cm) node[align=right, above]{$T_{\gamma}\quad$};
\fill[black] (1.3,-1.55) circle (.0001cm) node[align=right, above]{$T_{\alpha}\quad$};
\end{tikzpicture}
\qquad
\begin{tikzpicture}[scale=1.1]
\draw[][] (0,0) -- (0,-0.5);
\draw[][] (0,-0.5) -- (0.5,-0.5);
\draw[][] (0.5,-0.5) -- (0,0);
\draw[] (0,-0.5) -- (0,-1);
\draw[] (0.5,-0.5) -- (1,-1);
\draw[] (0,-1) -- (0.5,-1.5);
\draw[] (0.5,-1.5) -- (1,-1.5);
\draw[] (1,-1) -- (1,-1.5);
\draw[] (1,-1) -- (1.5,-1.5);
\draw[] (1,-1.5) -- (1.5,-1.5);
\draw[] (0,-1) -- (0,-1.5);
\draw[] (0,-1.5) -- (0.5,-1.5);
\draw[] (0,-1.5) -- (-0.5,-1.5);
\draw[] (-0.5,-1.5) -- (-0.5,-1);
\draw[] (-0.5,-1) -- (0,-1.5);
\draw[] (-0.5,-1) -- (-0.5,-0.5);
\draw[] (-0.5,-0.5) -- (0,-1);
\fill[black] (0.3,-0.55) circle (.0001cm) node[align=right, above]{$T_{\beta}\quad$};
\fill[black] (0.25,-1.5) circle (.0001cm) node[align=right, above]{$T\quad$};
\fill[black] (-0.2,-1.55) circle (.0001cm) node[align=right, above]{$T_{\gamma}\quad$};
\fill[black] (1.3,-1.55) circle (.0001cm) node[align=right, above]{$T_{\alpha}\quad$};
\end{tikzpicture}
\caption{Possibilities for $T$, when it shares semiuniform faces with three other triangular faces, involving a hexagonal face which $T$ shares with two other triangular face}
\label{fig:hexagon_case1}
\end{figure}

We realize that in all cases in Figure \ref{fig:Case4_1} and \ref{fig:Case4}, $n(T) = 6$, and for all the cases in Figure \ref{fig:hexagon_case1}, $n(T) = 5$. Also none of the $m-4$ triangular faces apart from $T$, $T_{\alpha}$, $T_{\beta}$ and $T_{\gamma}$, can determine the faces determined by $T$, because that would contradict the fact that $T$ can share faces with only three other triangular faces. By Theorem \ref{thm:induction}, for the $m-4$ triangular faces apart from $T$, $T_{\alpha}$, $T_{\beta}$ and $T_{\gamma}$, 

\[ n(T_{1} \cup T_{2} \hdots \cup T_{m-4}) \geq m-4   \]

therefore, 

\[ k = n(T_{1} \cup T_{2} \hdots \cup T_{m}) \geq n(T_{1} \cup T_{2} \hdots \cup T_{m-4}) + n(T) \geq m-4 + 5 = m+1  \]

which again gives a contradiction to the equation \ref{eq:sharp}.

\begin{enumerate}[resume]
    \item If $T$ shares a semiuniform face with exactly four other triangular faces $T_{\alpha}$, $T_{\beta}$, $T_{\gamma}$ and $T_{\phi}$ in $\mathcal{N}$.
\end{enumerate}

all possible cases for $T$, upto symmetry, are listed in Figure \ref{fig:Case5_1} and Figure \ref{fig:Case5}.

\begin{figure}
\centering
\begin{tikzpicture}[scale=0.6]
\draw[][] (0,0) -- (0,-1);
\draw[][] (0,-1) -- (1,-1);
\draw[][] (0,0) -- (1,-1);
\draw[][] (0,0) -- (0,1);
\draw[][] (0,1) -- (1,1);
\draw[][] (1,1) -- (2,0);
\draw[][] (2,0) -- (2,-1);
\draw[][] (2,-1) -- (1,-1);
\draw[][] (1,-1) -- (0,0);
\draw[][] (0,-1) -- (0,-2);
\draw[][] (0,-2) -- (1,-3);
\draw[][] (1,-3) -- (2,-3);
\draw[][] (2,-3) -- (2,-2);
\draw[][] (2,-2) -- (1,-1);
\draw[][] (1,-1) -- (0,-1);
\draw[][] (2,0) -- (3,-1);
\draw[][] (2,-1) -- (3,-1);
\draw[][] (0,-2) -- (0,-3);
\draw[][] (0,-3) -- (1,-3);
\draw[][] (2,-2) -- (3,-3);
\draw[][] (2,-3) -- (3,-3);
\draw[][] (0,1) -- (0,2);
\draw[][] (0,2) -- (1,1);
\draw[dashed][] (2,-2) -- (3,-2);
\draw[dashed][] (2,-1) -- (3,-2);
\draw[dashed][] (3,-2) -- (4,-2);
\draw[dashed][] (4,-2) -- (3,-1);
\draw[dashed][] (3,-3) -- (4,-3);
\draw[dashed][] (3,-2) -- (4,-3);
\draw[dashed][] (4,-3) -- (5,-3);
\draw[dashed][] (4,-2) -- (5,-3);
\fill[black] (2.35,-1.9) circle (.0001cm) node[align=right, above]{$S_{1}\quad$};
\fill[black] (3.35,-1.9) circle (.0001cm) node[align=right, above]{$S_{2}\quad$};
\fill[black] (3.35,-2.9) circle (.0001cm) node[align=right, above]{$S_{3}\quad$};
\fill[black] (4.35,-2.9) circle (.0001cm) node[align=right, above]{$S_{4}\quad$};
\fill[black] (0.5,-0.9) circle (.0001cm) node[align=right, above]{$T\quad$};
\fill[black] (2.55,-0.95) circle (.0001cm) node[align=right, above]{$T_{\alpha}\quad$};
\fill[black] (0.55,1) circle (.0001cm) node[align=right, above]{$T_{\beta}\quad$};
\fill[black] (0.55,-3.05) circle (.0001cm) node[align=right, above]{$T_{\gamma}\quad$};
\fill[black] (2.55,-3.05) circle (.0001cm) node[align=right, above]{$T_{\phi}\quad$};
\end{tikzpicture}
\caption{The case for $T$, when it shares two hexagonal faces with four other triangular faces}
\label{fig:Case5_1}
\end{figure}
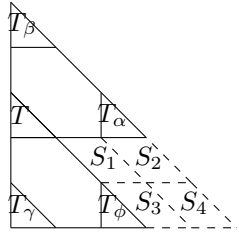

\begin{figure}
\centering
\begin{tikzpicture}[scale=0.6]
\draw[][] (0,0) -- (0,-1);
\draw[][] (0,-1) -- (1,-1);
\draw[][] (0,0) -- (1,-1);
\draw[][] (0,0) -- (-1,1);
\draw[][] (-1,1) -- (-2,1);
\draw[][] (-2,1) -- (-2,0);
\draw[][] (-2,0) -- (-1,-1);
\draw[][] (-1,-1) -- (0,-1);
\draw[][] (0,0) -- (0,1);
\draw[][] (0,1) -- (1,1);
\draw[][] (1,1) -- (2,0);
\draw[][] (2,0) -- (2,-1);
\draw[][] (2,-1) -- (1,-1);
\draw[][] (1,-1) -- (0,0);
\draw[][] (0,-1) -- (0,-2);
\draw[][] (0,-2) -- (1,-3);
\draw[][] (1,-3) -- (2,-3);
\draw[][] (2,-3) -- (2,-2);
\draw[][] (2,-2) -- (1,-1);
\draw[][] (1,-1) -- (0,-1);
\draw[][] (2,0) -- (3,-1);
\draw[][] (2,-1) -- (3,-1);
\draw[][] (-2,1) -- (-2,0);
\draw[][] (-2,-1) -- (-1,-1);
\draw[][] (-2,-1) -- (-2,0);
\draw[][] (0,-2) -- (0,-3);
\draw[][] (0,-3) -- (1,-3);
\draw[][] (0,1) -- (0,2);
\draw[][] (0,2) -- (1,1);
\fill[black] (0.5,-0.9) circle (.0001cm) node[align=right, above]{$T\quad$};
\fill[black] (-1.45,-1) circle (.0001cm) node[align=right, above]{$T_{\phi}\quad$};
\fill[black] (2.55,-1) circle (.0001cm) node[align=right, above]{$T_{\alpha}\quad$};
\fill[black] (0.55,0.95) circle (.0001cm) node[align=right, above]{$T_{\beta}\quad$};
\fill[black] (0.55,-3) circle (.0001cm) node[align=right, above]{$T_{\gamma}\quad$};
\end{tikzpicture}
\qquad
\begin{tikzpicture}[scale=0.6]
\draw[][] (0,0) -- (0,-1);
\draw[][] (0,-1) -- (1,-1);
\draw[][] (0,0) -- (1,-1);
\draw[][] (0,0) -- (0,1);
\draw[][] (0,-1) -- (-1,-1);
\draw[][] (-1,-1) -- (-1,0);
\draw[][] (-1,0) -- (0,-1);
\draw[][] (-1,0) -- (-1,1);
\draw[][] (-1,1) -- (0,0);
\draw[][] (0,1) -- (1,1);
\draw[][] (1,1) -- (2,0);
\draw[][] (2,0) -- (2,-1);
\draw[][] (2,-1) -- (1,-1);
\draw[][] (1,-1) -- (0,0);
\draw[][] (0,-1) -- (0,-2);
\draw[][] (0,-2) -- (1,-3);
\draw[][] (1,-3) -- (2,-3);
\draw[][] (2,-3) -- (2,-2);
\draw[][] (2,-2) -- (1,-1);
\draw[][] (1,-1) -- (0,-1);
\draw[][] (2,0) -- (3,-1);
\draw[][] (2,-1) -- (3,-1);
\draw[][] (0,-2) -- (0,-3);
\draw[][] (0,-3) -- (1,-3);
\draw[][] (0,1) -- (0,2);
\draw[][] (0,2) -- (1,1);
\fill[black] (0.5,-0.9) circle (.0001cm) node[align=right, above]{$T\quad$};
\fill[black] (-0.45,-0.95) circle (.0001cm) node[align=right, above]{$T_{\phi}\quad$};
\fill[black] (2.55,-0.95) circle (.0001cm) node[align=right, above]{$T_{\alpha}\quad$};
\fill[black] (0.55,0.95) circle (.0001cm) node[align=right, above]{$T_{\beta}\quad$};
\fill[black] (0.55,-3.05) circle (.0001cm) node[align=right, above]{$T_{\gamma}\quad$};
\end{tikzpicture}
\qquad
\begin{tikzpicture}[scale=0.6]
\draw[][] (0,0) -- (0,-1);
\draw[][] (0,0) -- (1,-1);
\draw[][] (1,-1) -- (1,-2);
\draw[][] (0,-1) -- (1,-2);
\draw[][] (0,-1) -- (0,-2);
\draw[][] (0,-2) -- (1,-2);
\draw[][] (0,-2) -- (0,-3);
\draw[][] (0,-3) -- (1,-3);
\draw[][] (1,-3) -- (1,-2);
\draw[][] (1,-1) -- (2,-2);
\draw[][] (2,-2) -- (2,-3);
\draw[][] (1,-2) -- (2,-3);
\draw[][] (1,-3) -- (2,-3);
\draw[][] (2,-2) -- (3,-3);
\draw[][] (2,-3) -- (3,-3);
\draw[][] (1,-3) -- (1,-4);
\draw[][] (1,-4) -- (2,-5);
\draw[][] (2,-5) -- (3,-5);
\draw[][] (3,-5) -- (3,-4);
\draw[][] (3,-4) -- (2,-3);
\draw[][] (3,-5) -- (4,-5);
\draw[][] (4,-5) -- (3,-4);
\draw[][] (1,-4) -- (1,-5);
\draw[][] (1,-5) -- (2,-5);
\fill[black] (1.5,-2.9) circle (.0001cm) node[align=right, above]{$T\quad$};
\fill[black] (0.55,-2.05) circle (.0001cm) node[align=right, above]{$T_{\alpha}\quad$};
\fill[black] (2.55,-3.05) circle (.0001cm) node[align=right, above]{$T_{\beta}\quad$};
\fill[black] (3.55,-5.05) circle (.0001cm) node[align=right, above]{$T_{\gamma}\quad$};
\fill[black] (1.55,-5.05) circle (.0001cm) node[align=right, above]{$T_{\phi}\quad$};
\end{tikzpicture}
\qquad
\begin{tikzpicture}[scale=0.6]
\draw[][] (4,0) -- (4,-1);
\draw[][] (4,0) -- (5,-1);
\draw[][] (6,-2) -- (5,-2);
\draw[][] (4,-1) -- (5,-1);
\draw[][] (4,-1) -- (5,-2);
\draw[][] (4,-1) -- (4,-2);
\draw[][] (4,-2) -- (5,-2);
\draw[][] (4,-2) -- (4,-3);
\draw[][] (4,-3) -- (5,-3);
\draw[][] (5,-3) -- (5,-2);
\draw[][] (5,-1) -- (6,-2);
\draw[][] (6,-2) -- (5,-2);
\draw[][] (5,-2) -- (6,-3);
\draw[][] (5,-3) -- (6,-3);
\draw[][] (6,-2) -- (7,-3);
\draw[][] (6,-3) -- (7,-3);
\draw[][] (4,-1) -- (3,0);
\draw[][] (3,0) -- (2,0);
\draw[][] (2,0) -- (2,-1);
\draw[][] (2,-1) -- (3,-2);
\draw[][] (3,-2) -- (4,-2);
\draw[][] (2,-1) -- (2,-2);
\draw[][] (2,-2) -- (3,-2);
\draw[][] (2,0) -- (2,1);
\draw[][] (2,1) -- (3,0);
\fill[black] (5.55,-3.05) circle (.0001cm) node[align=right, above]{$T_{\alpha}\quad$};
\fill[black] (4.5,-1.9) circle (.0001cm) node[align=right, above]{$T\quad$};
\fill[black] (4.55,-1.05) circle (.0001cm) node[align=right, above]{$T_{\beta}\quad$};
\fill[black] (2.55,-2.05) circle (.0001cm) node[align=right, above]{$T_{\gamma}\quad$};
\fill[black] (2.55,-0.05) circle (.0001cm) node[align=right, above]{$T_{\phi}\quad$};
\end{tikzpicture}
\qquad
\begin{tikzpicture}[scale=0.6]
\draw[][] (8,0) -- (8,-1);
\draw[][] (8,0) -- (9,-1);
\draw[][] (10,-2) -- (9,-2);
\draw[][] (8,-1) -- (9,-1);
\draw[][] (9,-1) -- (9,-2);
\draw[][] (8,-1) -- (8,-2);
\draw[][] (8,-2) -- (9,-2);
\draw[][] (8,-2) -- (8,-3);
\draw[][] (8,-3) -- (9,-3);
\draw[][] (9,-3) -- (9,-2);
\draw[][] (9,-1) -- (10,-2);
\draw[][] (10,-2) -- (9,-2);
\draw[][] (10,-2) -- (10,-3);
\draw[][] (9,-3) -- (10,-3);
\draw[][] (10,-2) -- (11,-3);
\draw[][] (10,-3) -- (11,-3);
\draw[][] (9,-1) -- (9,0);
\draw[][] (9,0) -- (10,0);
\draw[][] (10,0) -- (11,-1);
\draw[][] (11,-1) -- (11,-2);
\draw[][] (11,-2) -- (10,-2);
\draw[][] (11,-1) -- (12,-2);
\draw[][] (11,-2) -- (12,-2);
\draw[][] (9,0) -- (9,1);
\draw[][] (9,1) -- (10,0);
\fill[black] (10.55,-3) circle (.0001cm) node[align=right, above]{$T_{\alpha}\quad$};
\fill[black] (9.5,-1.9) circle (.0001cm) node[align=right, above]{$T\quad$};
\fill[black] (8.55,-1) circle (.0001cm) node[align=right, above]{$T_{\beta}\quad$};
\fill[black] (11.55,-2.05) circle (.0001cm) node[align=right, above]{$T_{\gamma}\quad$};
\fill[black] (9.55,-0.05) circle (.0001cm) node[align=right, above]{$T_{\phi}\quad$};
\end{tikzpicture}
\qquad
\begin{tikzpicture}[scale=0.6]
\draw[][] (12,0) -- (12,-1);
\draw[][] (12,0) -- (13,-1);
\draw[][] (14,-2) -- (13,-2);
\draw[][] (12,-1) -- (13,-2);
\draw[][] (13,-1) -- (13,-2);
\draw[][] (12,-1) -- (12,-2);
\draw[][] (12,-2) -- (13,-2);
\draw[][] (12,-2) -- (12,-3);
\draw[][] (12,-3) -- (13,-3);
\draw[][] (13,-3) -- (13,-2);
\draw[][] (13,-1) -- (14,-2);
\draw[][] (14,-2) -- (13,-2);
\draw[][] (13,-2) -- (14,-3);
\draw[][] (13,-3) -- (14,-3);
\draw[][] (14,-2) -- (15,-3);
\draw[][] (14,-3) -- (15,-3);
\draw[][] (13,-1) -- (13,0);
\draw[][] (13,0) -- (14,0);
\draw[][] (14,0) -- (15,-1);
\draw[][] (15,-1) -- (15,-2);
\draw[][] (14,-2) -- (15,-2);
\draw[][] (15,-2) -- (16,-2);
\draw[][] (15,-1) -- (16,-2);
\draw[][] (13,0) -- (13,1);
\draw[][] (13,1) -- (14,0);
\fill[black] (13.55,-3) circle (.0001cm) node[align=right, above]{$T_{\alpha}\quad$};
\fill[black] (13.5,-1.9) circle (.0001cm) node[align=right, above]{$T\quad$};
\fill[black] (12.55,-2) circle (.0001cm) node[align=right, above]{$T_{\beta}\quad$};
\fill[black] (15.55,-2.05) circle (.0001cm) node[align=right, above]{$T_{\gamma}\quad$};
\fill[black] (13.55,-0.05) circle (.0001cm) node[align=right, above]{$T_{\phi}\quad$};
\end{tikzpicture}
\qquad
\begin{tikzpicture}[scale=0.6]
\draw[][] (0,-4) -- (0,-5);
\draw[][] (0,-4) -- (1,-5);
\draw[][] (1,-5) -- (0,-5);
\draw[][] (0,-5) -- (1,-6);
\draw[][] (0,-5) -- (0,-6);
\draw[][] (0,-6) -- (1,-6);
\draw[][] (0,-6) -- (0,-7);
\draw[][] (0,-7) -- (1,-7);
\draw[][] (2,-7) -- (1,-6);
\draw[][] (1,-5) -- (2,-6);
\draw[][] (2,-6) -- (1,-6);
\draw[][] (0,-6) -- (1,-7);
\draw[][] (1,-7) -- (2,-7);
\draw[][] (2,-6) -- (3,-7);
\draw[][] (2,-7) -- (3,-7);
\draw[][] (0,-5) -- (-1,-4);
\draw[][] (-1,-4) -- (-2,-4);
\draw[][] (-2,-4) -- (-2,-5);
\draw[][] (-2,-5) -- (-1,-6);
\draw[][] (-2,-5) -- (-2,-6);
\draw[][] (-2,-6) -- (-1,-6);
\draw[][] (-1,-6) -- (0,-6);
\draw[][] (-2,-4) -- (-2,-3);
\draw[][] (-2,-3) -- (-1,-4);
\fill[black] (0.55,-7) circle (.0001cm) node[align=right, above]{$T_{\alpha}\quad$};
\fill[black] (0.5,-5.9) circle (.0001cm) node[align=right, above]{$T\quad$};
\fill[black] (0.55,-5) circle (.0001cm) node[align=right, above]{$T_{\beta}\quad$};
\fill[black] (-1.45,-6.05) circle (.0001cm) node[align=right, above]{$T_{\gamma}\quad$};
\fill[black] (-1.45,-4.05) circle (.0001cm) node[align=right, above]{$T_{\phi}\quad$};
\end{tikzpicture}
\qquad
\begin{tikzpicture}[scale=0.6]
\draw[][] (4,-4) -- (4,-5);
\draw[][] (4,-4) -- (5,-5);
\draw[][] (6,-6) -- (5,-6);
\draw[][] (5,-6) -- (5,-5);
\draw[][] (4,-5) -- (5,-6);
\draw[][] (4,-5) -- (4,-6);
\draw[][] (4,-6) -- (5,-6);
\draw[][] (4,-6) -- (4,-7);
\draw[][] (4,-7) -- (5,-7);
\draw[][] (5,-7) -- (4,-6);
\draw[][] (5,-5) -- (6,-6);
\draw[][] (6,-6) -- (5,-6);
\draw[][] (5,-6) -- (6,-7);
\draw[][] (5,-7) -- (6,-7);
\draw[][] (6,-6) -- (7,-7);
\draw[][] (6,-7) -- (7,-7);
\draw[][] (4,-5) -- (3,-4);
\draw[][] (3,-4) -- (2,-4);
\draw[][] (2,-4) -- (2,-5);
\draw[][] (2,-5) -- (3,-6);
\draw[][] (3,-6) -- (4,-6);
\draw[][] (2,-5) -- (2,-6);
\draw[][] (2,-6) -- (3,-6);
\draw[][] (2,-4) -- (2,-3);
\draw[][] (2,-3) -- (3,-4);
\fill[black] (4.55,-7) circle (.0001cm) node[align=right, above]{$T_{\alpha}\quad$};
\fill[black] (4.5,-5.9) circle (.0001cm) node[align=right, above]{$T\quad$};
\fill[black] (5.55,-6) circle (.0001cm) node[align=right, above]{$T_{\beta}\quad$};
\fill[black] (2.55,-6.05) circle (.0001cm) node[align=right, above]{$T_{\gamma}\quad$};
\fill[black] (2.55,-4.05) circle (.0001cm) node[align=right, above]{$T_{\phi}\quad$};
\end{tikzpicture}
\qquad
\begin{tikzpicture}[scale=0.6]
\draw[][] (8,-4) -- (8,-5);
\draw[][] (8,-4) -- (9,-5);
\draw[][] (8,-5) -- (9,-6);
\draw[][] (9,-5) -- (9,-6);
\draw[][] (8,-5) -- (8,-6);
\draw[][] (8,-6) -- (9,-7);
\draw[][] (8,-6) -- (8,-7);
\draw[][] (8,-7) -- (9,-7);
\draw[][] (9,-7) -- (9,-6);
\draw[][] (9,-5) -- (10,-6);
\draw[][] (10,-7) -- (9,-6);
\draw[][] (10,-6) -- (10,-7);
\draw[][] (9,-7) -- (10,-7);
\draw[][] (10,-6) -- (11,-7);
\draw[][] (10,-7) -- (11,-7);
\draw[][] (9,-7) -- (9,-8);
\draw[][] (9,-8) -- (10,-9);
\draw[][] (10,-9) -- (11,-9);
\draw[][] (11,-9) -- (11,-8);
\draw[][] (11,-8) -- (10,-7);
\draw[][] (9,-8) -- (9,-9);
\draw[][] (9,-9) -- (10,-9);
\draw[][] (11,-8) -- (12,-9);
\draw[][] (11,-9) -- (12,-9);
\fill[black] (10.55,-7) circle (.0001cm) node[align=right, above]{$T_{\alpha}\quad$};
\fill[black] (9.5,-6.9) circle (.0001cm) node[align=right, above]{$T\quad$};
\fill[black] (8.55,-7) circle (.0001cm) node[align=right, above]{$T_{\beta}\quad$};
\fill[black] (9.55,-9.05) circle (.0001cm) node[align=right, above]{$T_{\gamma}\quad$};
\fill[black] (11.55,-9.05) circle (.0001cm) node[align=right, above]{$T_{\phi}\quad$};
\end{tikzpicture}
\qquad
\begin{tikzpicture}[scale=0.6]
\draw[][] (12,-4) -- (12,-5);
\draw[][] (12,-4) -- (13,-5);
\draw[][] (14,-6) -- (13,-6);
\draw[][] (12,-5) -- (13,-6);
\draw[][] (13,-5) -- (13,-6);
\draw[][] (12,-5) -- (12,-6);
\draw[][] (12,-6) -- (13,-7);
\draw[][] (12,-6) -- (12,-7);
\draw[][] (12,-7) -- (13,-7);
\draw[][] (13,-7) -- (13,-6);
\draw[][] (13,-5) -- (14,-6);
\draw[][] (14,-6) -- (13,-6);
\draw[][] (13,-6) -- (14,-7);
\draw[][] (13,-7) -- (14,-7);
\draw[][] (14,-6) -- (15,-7);
\draw[][] (14,-7) -- (15,-7);
\draw[][] (13,-7) -- (13,-8);
\draw[][] (13,-8) -- (14,-9);
\draw[][] (14,-9) -- (15,-9);
\draw[][] (15,-9) -- (15,-8);
\draw[][] (15,-8) -- (14,-7);
\draw[][] (13,-8) -- (13,-9);
\draw[][] (13,-9) -- (14,-9);
\draw[][] (15,-8) -- (16,-9);
\draw[][] (15,-9) -- (16,-9);
\fill[black] (13.5,-6.9) circle (.0001cm) node[align=right, above]{$T\quad$};
\fill[black] (13.55,-6) circle (.0001cm) node[align=right, above]{$T_{\alpha}\quad$};
\fill[black] (12.55,-7) circle (.0001cm) node[align=right, above]{$T_{\beta}\quad$};
\fill[black] (13.55,-9.05) circle (.0001cm) node[align=right, above]{$T_{\gamma}\quad$};
\fill[black] (15.55,-9.05) circle (.0001cm) node[align=right, above]{$T_{\phi}\quad$};
\end{tikzpicture}
\qquad
\begin{tikzpicture}[scale=0.6]
\draw[][] (4,-10) -- (4,-11);
\draw[][] (4,-10) -- (5,-11);
\draw[][] (6,-12) -- (5,-12);
\draw[][] (5,-12) -- (5,-11);
\draw[][] (4,-11) -- (5,-12);
\draw[][] (4,-11) -- (4,-12);
\draw[][] (4,-12) -- (5,-12);
\draw[][] (4,-12) -- (4,-13);
\draw[][] (4,-13) -- (5,-13);
\draw[][] (5,-13) -- (5,-12);
\draw[][] (5,-11) -- (6,-12);
\draw[][] (6,-12) -- (5,-12);
\draw[][] (6,-12) -- (6,-13);
\draw[][] (5,-13) -- (6,-13);
\draw[][] (6,-12) -- (7,-13);
\draw[][] (6,-13) -- (7,-13);
\draw[][] (5,-11) -- (5,-10);
\draw[][] (5,-10) -- (6,-10);
\draw[][] (6,-10) -- (7,-11);
\draw[][] (7,-11) -- (7,-12);
\draw[][] (7,-12) -- (6,-12);
\draw[][] (7,-12) -- (8,-12);
\draw[][] (8,-12) -- (7,-11);
\draw[][] (5,-10) -- (5,-9);
\draw[][] (5,-9) -- (6,-10);
\fill[black] (6.55,-13) circle (.0001cm) node[align=right, above]{$T_{\alpha}\quad$};
\fill[black] (4.55,-12) circle (.0001cm) node[align=right, above]{$T_{\beta}\quad$};
\fill[black] (5.5,-11.9) circle (.0001cm) node[align=right, above]{$T\quad$};
\fill[black] (7.55,-12.05) circle (.0001cm) node[align=right, above]{$T_{\gamma}\quad$};
\fill[black] (5.55,-9.95) circle (.0001cm) node[align=right, above]{$T_{\phi}\quad$};
\end{tikzpicture}
\caption{Possibilities for $T$, when it shares semiuniform faces with four other triangular faces}
\label{fig:Case5}
\end{figure}
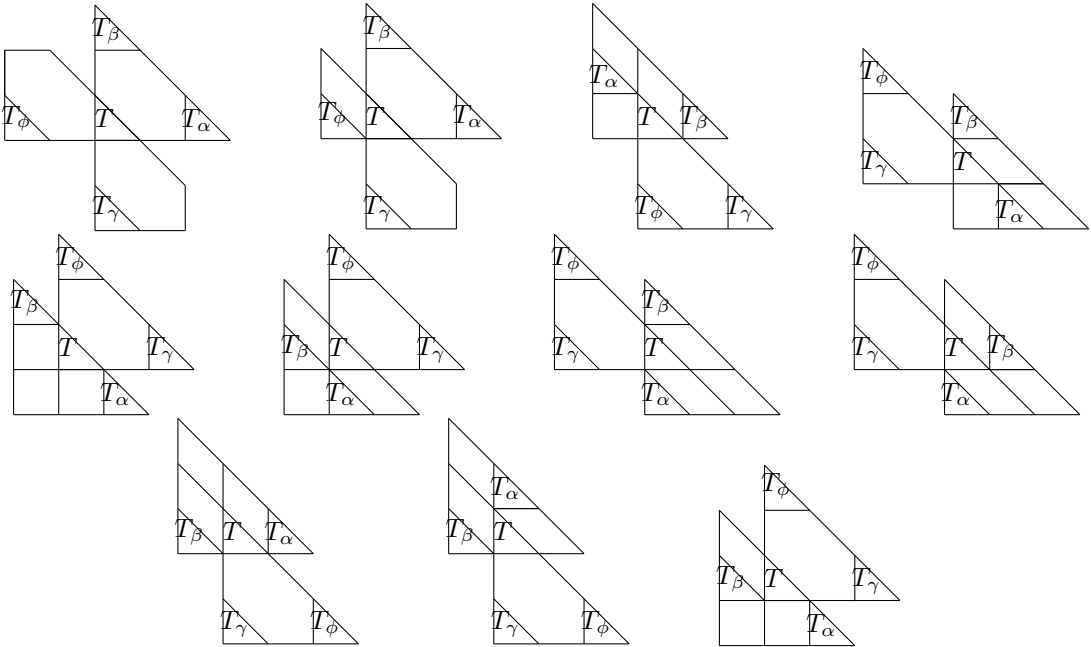

We realize that for the case in Figure \ref{fig:Case5_1}, $n(T) = 5$. However, due to the arrangements of the faces, some of the faces are fixed and are bound to appear in the subdivision, which we show as $S_{1}, S_{2}, S_{3}$ and $S_{4}$ in the Figure \ref{fig:Case5_1}. Amongst these faces, $S_{4}$ is a face which can not be determined by $T, T_{\alpha}, T_{\beta}, T_{\phi}$ and  $T_{\gamma}$. Additionally, we observe that it can also not be determined by any of the remaining $m-5$ triangular faces in $\mathcal{N}$ since it has no free edges which could be adjacent to a triangular face. This implies that the dual point to $S_{4}$ contributes to the count of stable intersections of first kind $k$, although it is not determined by any triangular face in $\mathcal{N}$. This gives a contradiction to the following equality 

\[ k = n(T_{1} \cup T_{2} \hdots \cup T_{m})  \]

in \ref{eq:sharp}.

For the other cases in Figure \ref{fig:Case5}, $n(T) = 6$. None of the $m-5$ triangular faces apart from $T$, $T_{\alpha}$, $T_{\beta}, T_{\gamma}$ and $T_{\phi}$, can determine the faces determined by $T$, because that would contradict the fact that $T$ can share faces with only four other triangular faces. By Theorem \ref{thm:induction}, for the $m-5$ triangular faces apart from $T$, $T_{\alpha}$, $T_{\beta}$ and $T_{\gamma}$, 

\[ n(T_{1} \cup T_{2} \hdots \cup T_{m-5}) \geq m-5   \]

therefore, 

\[ k = n(T_{1} \cup T_{2} \hdots \cup T_{m}) \geq n(T_{1} \cup T_{2} \hdots \cup T_{m-5}) + n(T) \geq m-5 + 6 = m+1  \]

which again gives a contradiction to the equation \ref{eq:sharp}.

The remaining three cases illustrated in Figure \ref{fig:Case6} can also be eliminated by a similar argument, since in all these cases we obtain a semiuniform face $S'$, which cannot be determined by a triangular face, which gives a contradiction to the equation \ref{eq:sharp}.

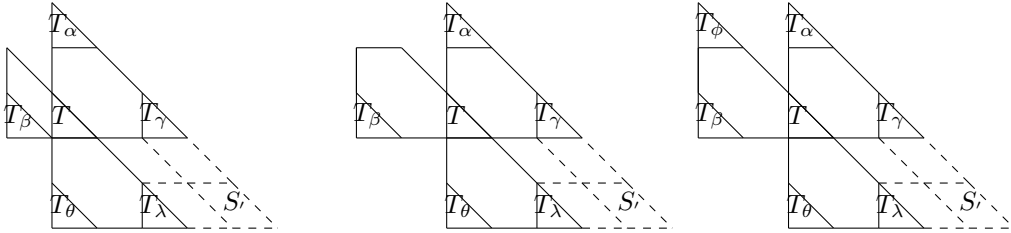
\begin{figure}[H]
\centering
\begin{tikzpicture}[scale=0.6]
\draw[][] (0,0) -- (0,-1);
\draw[][] (0,-1) -- (1,-1);
\draw[][] (0,0) -- (1,-1);
\draw[][] (0,0) -- (0,1);
\draw[][] (0,1) -- (1,1);
\draw[][] (1,1) -- (2,0);
\draw[][] (2,0) -- (2,-1);
\draw[][] (2,-1) -- (1,-1);
\draw[][] (1,-1) -- (0,0);
\draw[][] (0,-1) -- (0,-2);
\draw[][] (0,-2) -- (1,-3);
\draw[][] (1,-3) -- (2,-3);
\draw[][] (2,-3) -- (2,-2);
\draw[][] (2,-2) -- (1,-1);
\draw[][] (1,-1) -- (0,-1);
\draw[][] (2,0) -- (3,-1);
\draw[][] (2,-1) -- (3,-1);
\draw[][] (0,-2) -- (0,-3);
\draw[][] (0,-3) -- (1,-3);
\draw[][] (2,-2) -- (3,-3);
\draw[][] (2,-3) -- (3,-3);
\draw[][] (0,1) -- (0,2);
\draw[][] (0,2) -- (1,1);
\draw[][] (0,-1) -- (-1,-1);
\draw[][] (-1,-1) -- (-1,0);
\draw[][] (-1,0) -- (0,-1);
\draw[][] (-1,0) -- (-1,1);
\draw[][] (-1,1) -- (0,0);
\draw[dashed][] (2,-2) -- (3,-2);
\draw[dashed][] (2,-1) -- (3,-2);
\draw[dashed][] (3,-2) -- (4,-2);
\draw[dashed][] (4,-2) -- (3,-1);
\draw[dashed][] (3,-3) -- (4,-3);
\draw[dashed][] (3,-2) -- (4,-3);
\draw[dashed][] (4,-3) -- (5,-3);
\draw[dashed][] (4,-2) -- (5,-3);
\fill[black] (4.35,-2.9) circle (.0001cm) node[align=right, above]{$S_{'}\quad$};
\fill[black] (0.5,-0.9) circle (.0001cm) node[align=right, above]{$T\quad$};
\fill[black] (-0.45,-1.05) circle (.0001cm) node[align=right, above]{$T_{\beta}\quad$};
\fill[black] (2.55,-1) circle (.0001cm) node[align=right, above]{$T_{\gamma}\quad$};
\fill[black] (0.55,1.05) circle (.0001cm) node[align=right, above]{$T_{\alpha}\quad$};
\fill[black] (0.55,-2.95) circle (.0001cm) node[align=right, above]{$T_{\theta}\quad$};
\fill[black] (2.55,-2.95) circle (.0001cm) node[align=right, above]{$T_{\lambda}\quad$};
\end{tikzpicture}
\qquad
\begin{tikzpicture}[scale=0.6]
\draw[][] (0,0) -- (0,-1);
\draw[][] (0,-1) -- (1,-1);
\draw[][] (0,0) -- (1,-1);
\draw[][] (0,0) -- (-1,1);
\draw[][] (-1,1) -- (-2,1);
\draw[][] (-2,1) -- (-2,0);
\draw[][] (-2,0) -- (-1,-1);
\draw[][] (-1,-1) -- (0,-1);
\draw[][] (0,0) -- (0,1);
\draw[][] (0,1) -- (1,1);
\draw[][] (1,1) -- (2,0);
\draw[][] (2,0) -- (2,-1);
\draw[][] (2,-1) -- (1,-1);
\draw[][] (1,-1) -- (0,0);
\draw[][] (0,-1) -- (0,-2);
\draw[][] (0,-2) -- (1,-3);
\draw[][] (1,-3) -- (2,-3);
\draw[][] (2,-3) -- (2,-2);
\draw[][] (2,-2) -- (1,-1);
\draw[][] (1,-1) -- (0,-1);
\draw[][] (2,0) -- (3,-1);
\draw[][] (2,-1) -- (3,-1);
\draw[][] (-2,1) -- (-2,0);
\draw[][] (-2,-1) -- (-1,-1);
\draw[][] (-2,-1) -- (-2,0);
\draw[][] (0,-2) -- (0,-3);
\draw[][] (0,-3) -- (1,-3);
\draw[][] (2,-2) -- (3,-3);
\draw[][] (2,-3) -- (3,-3);
\draw[][] (0,1) -- (0,2);
\draw[][] (0,2) -- (1,1);
\draw[dashed][] (2,-2) -- (3,-2);
\draw[dashed][] (2,-1) -- (3,-2);
\draw[dashed][] (3,-2) -- (4,-2);
\draw[dashed][] (4,-2) -- (3,-1);
\draw[dashed][] (3,-3) -- (4,-3);
\draw[dashed][] (3,-2) -- (4,-3);
\draw[dashed][] (4,-3) -- (5,-3);
\draw[dashed][] (4,-2) -- (5,-3);
\fill[black] (4.35,-2.9) circle (.0001cm) node[align=right, above]{$S_{'}\quad$};
\fill[black] (0.5,-0.9) circle (.0001cm) node[align=right, above]{$T\quad$};
\fill[black] (-1.45,-1) circle (.0001cm) node[align=right, above]{$T_{\beta}\quad$};
\fill[black] (2.55,-1) circle (.0001cm) node[align=right, above]{$T_{\gamma}\quad$};
\fill[black] (0.55,1.05) circle (.0001cm) node[align=right, above]{$T_{\alpha}\quad$};
\fill[black] (0.55,-2.95) circle (.0001cm) node[align=right, above]{$T_{\theta}\quad$};
\fill[black] (2.55,-2.95) circle (.0001cm) node[align=right, above]{$T_{\lambda}\quad$};
\end{tikzpicture}
%\qquad
\vspace{0.5cm}
\begin{tikzpicture}[scale=0.6]
\draw[][] (0,0) -- (0,-1);
\draw[][] (0,-1) -- (1,-1);
\draw[][] (0,0) -- (1,-1);
\draw[][] (0,0) -- (-1,1);
\draw[][] (-1,1) -- (-2,1);
\draw[][] (-2,1) -- (-2,0);
\draw[][] (-2,0) -- (-1,-1);
\draw[][] (-1,-1) -- (0,-1);
\draw[][] (0,0) -- (0,1);
\draw[][] (0,1) -- (1,1);
\draw[][] (1,1) -- (2,0);
\draw[][] (2,0) -- (2,-1);
\draw[][] (2,-1) -- (1,-1);
\draw[][] (1,-1) -- (0,0);
\draw[][] (0,-1) -- (0,-2);
\draw[][] (0,-2) -- (1,-3);
\draw[][] (1,-3) -- (2,-3);
\draw[][] (2,-3) -- (2,-2);
\draw[][] (2,-2) -- (1,-1);
\draw[][] (1,-1) -- (0,-1);
\draw[][] (2,0) -- (3,-1);
\draw[][] (2,-1) -- (3,-1);
\draw[][] (-2,1) -- (-2,0);
\draw[][] (-2,-1) -- (-1,-1);
\draw[][] (-2,-1) -- (-2,0);
\draw[][] (0,-2) -- (0,-3);
\draw[][] (0,-3) -- (1,-3);
\draw[][] (2,-2) -- (3,-3);
\draw[][] (2,-3) -- (3,-3);
\draw[][] (-2,2) -- (-1,1);
\draw[][] (-2,2) -- (-2,1);
\draw[][] (0,1) -- (0,2);
\draw[][] (0,2) -- (1,1);
\draw[dashed][] (2,-2) -- (3,-2);
\draw[dashed][] (2,-1) -- (3,-2);
\draw[dashed][] (3,-2) -- (4,-2);
\draw[dashed][] (4,-2) -- (3,-1);
\draw[dashed][] (3,-3) -- (4,-3);
\draw[dashed][] (3,-2) -- (4,-3);
\draw[dashed][] (4,-3) -- (5,-3);
\draw[dashed][] (4,-2) -- (5,-3);
\fill[black] (4.35,-2.9) circle (.0001cm) node[align=right, above]{$S_{'}\quad$};
\fill[black] (0.5,-0.9) circle (.0001cm) node[align=right, above]{$T\quad$};
\fill[black] (-1.45,-1) circle (.0001cm) node[align=right, above]{$T_{\beta}\quad$};
\fill[black] (2.55,-1) circle (.0001cm) node[align=right, above]{$T_{\gamma}\quad$};
\fill[black] (0.55,1.05) circle (.0001cm) node[align=right, above]{$T_{\alpha}\quad$};
\fill[black] (-1.45,1) circle (.0001cm) node[align=right, above]{$T_{\phi}\quad$};
\fill[black] (0.55,-2.95) circle (.0001cm) node[align=right, above]{$T_{\theta}\quad$};
\fill[black] (2.55,-2.95) circle (.0001cm) node[align=right, above]{$T_{\lambda}\quad$};
\end{tikzpicture}
\caption{The cases where $T$ shares faces with five or six other triangular faces}
\label{fig:Case6}
\end{figure}

Hence, we completed all cases and we infer that the presence of a triangular face in the relative interior contradicts the sharpness of the bound. Hence, the proof.

\end{proof} 

\begin{remark}
We note that the converse of Theorem \ref{thm:sharp} does not hold true, meaning that if $\mathcal{L}$ is a tropical near-pencil arrangement, then it does not imply that the number of stable intersections equals $n-3$, an example of which is illustrated in Figure \ref{fig:tropical_near_pencil_arrangement}. 
\end{remark}

Now we have established the required setup to state the tropical versions of the de-Bruijn Erd\H os Theorem,

\begin{theorem}[Dual Tropical de Bruijn-Erd\H os Theorem]\label{thm:main_thm}
Let $\mathcal{L}$ be a tropical line arrangement of $n$ $(n \geq 4)$ tropical lines in the plane. Let $b$ denote the number of stable intersections determined by $\mathcal{L}$. Then,

\begin{enumerate}
    \item $b \geq n - 3$
    \item if $b = n - 3$, then $\mathcal{L}$ is a \textbf{tropical near-pencil arrangement}. 
\end{enumerate}
\end{theorem}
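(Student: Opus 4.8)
The plan is to read off both statements from the structural results already in place, so that the proof reduces to a short deduction together with one bookkeeping check.

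Part (1) is nothing but Theorem \ref{thm:bound}: a tropical line arrangement of $n$ tropical lines determines at least $n-3$ stable intersections, so no additional work is needed here.

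For part (2), assume $b = n-3$, so the inequality in part (1) is an equality and the bound of Theorem \ref{thm:bound} is attained. I would then invoke Theorem \ref{thm:sharp}, which asserts that in this case the dual Newton subdivision $\mathcal{N}$ has a triangular face at each of the three corners of the Newton polygon and has no triangular face in its relative interior. It remains only to check that this coincides with the defining property of a tropical near-pencil arrangement from Definition \ref{def:near_pencil_line}. A triangular face at a corner has two of its edges on the boundary of the Newton polygon; any other triangular face, by Theorem \ref{thm:sharp}, does not lie in the relative interior, hence meets the boundary, and since the triangular cells of a linear Newton subdivision have unit-length edges lying in the three primitive directions (Figure \ref{fig:cell_shape}), such a cell cannot touch the boundary triangle of $\mathcal{N}$ in a single vertex only; its position relative to the boundary is exactly the interior / one-boundary-edge / corner trichotomy used throughout Lemma \ref{lem:lemma2} and Theorem \ref{thm:basecase}. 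Consequently every triangular face of $\mathcal{N}$ has at least one edge on the boundary, which is precisely the condition for $\mathcal{L}$ to be a tropical near-pencil arrangement.

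The substantive difficulty has been pushed entirely into Theorem \ref{thm:sharp} (which in turn relies on the inductive Theorem \ref{thm:induction}), whose proof excludes interior triangular faces by a lengthy case distinction on how a triangular face can share its determined semiuniform faces with other triangular faces. At the level of the present theorem the only point requiring any attention is the bookkeeping identification that ``$\mathcal{N}$ has no interior triangle'' is equivalent to ``every triangle of $\mathcal{N}$ has a boundary edge'', and, as just noted, this is immediate from the shape of the triangular cells. I expect no further obstacle; I would simply remark, in passing, that the converse implication fails (cf. the arrangement in Figure \ref{fig:tropical_near_pencil_arrangement}), so the statement is genuinely one-directional.
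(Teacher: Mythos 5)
Your proposal is correct and follows exactly the route the paper intends: part (1) is Theorem \ref{thm:bound}, and part (2) is Theorem \ref{thm:sharp} combined with the observation that excluding triangles in the relative interior (i.e.\ triangles with no edge on the boundary of the Newton polygon) is precisely the condition of Definition \ref{def:near_pencil_line}. The paper leaves this final deduction implicit, so your spelled-out bookkeeping, including the remark that the converse fails, adds nothing beyond what the paper's own argument requires and is accurate.
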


With the duality elaborated in ~\ref{thm: duality}, we can now state the main theorem,

\begin{theorem}[Tropical de Bruijn-Erd\H os Theorem]
Let $\mathcal{S}$ denote a set of points in the tropical plane. Let $v$ $(v \geq 4)$ denote the number of points in $\mathcal{S}$, and let $b$ denote the number of stable tropical lines determined by these points. Then,

\begin{enumerate}
    \item $b \geq v - 3$
    \item if $b = v - 3$, then $\mathcal{S}$ forms a \textbf{tropical near-pencil}.
\end{enumerate}
\end{theorem}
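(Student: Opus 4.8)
The plan is to deduce this statement directly from its dual counterpart, Theorem~\ref{thm:main_thm}, using the projective duality $\phi$ of the tropical plane together with Proposition~\ref{thm: duality}. First I would apply $\phi$ to the point set $\mathcal{S}$: since for each point $P$ the dual line $\phi(P)$ has $-P$ as its vertex, the map $\phi$ is injective on $\mathcal{S}$, so the image $\mathcal{L} := \phi(\mathcal{S})$ is a tropical line arrangement consisting of exactly $n = v$ distinct tropical lines. In particular $n \geq 4$, so Theorem~\ref{thm:main_thm} applies to $\mathcal{L}$.

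Next I would invoke Proposition~\ref{thm: duality}, which asserts that the number of stable tropical lines determined by $\mathcal{S}$ equals the number of stable intersections determined by the dual line arrangement $\mathcal{L}$; call this common number $b$. Then part (1) of Theorem~\ref{thm:main_thm} gives $b \geq n - 3 = v - 3$, which is exactly statement (1). For statement (2), suppose $b = v - 3$. Translating through the duality, $\mathcal{L}$ determines $n - 3$ stable intersections, so part (2) of Theorem~\ref{thm:main_thm} yields that $\mathcal{L}$ is a tropical near-pencil arrangement. By Definition~\ref{def:near_pencil_point}, a point set whose dual line arrangement is a tropical near-pencil arrangement forms a tropical near-pencil; hence $\mathcal{S}$ forms a tropical near-pencil, completing the argument.

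The only point requiring care — and the step I expect to be the main (mild) obstacle — is checking that the bookkeeping of the duality is exact: that no two distinct points of $\mathcal{S}$ produce the same tropical line (handled by the fact that the vertex of $\phi(P)$ recovers $-P$), and that Proposition~\ref{thm: duality} genuinely gives an \emph{equality} of counts rather than merely an inequality, including in the coaxial configurations where the stability conventions in Definition~\ref{thm: tropical-stable} are essential. Once these are in place, the result is immediate from Theorem~\ref{thm:main_thm}; all the combinatorial work has already been carried out in Theorems~\ref{thm:bound} and~\ref{thm:sharp}.
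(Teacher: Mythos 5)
Your proposal is exactly the paper's route: the paper deduces this theorem from the Dual Tropical de Bruijn--Erd\H{o}s Theorem (Theorem~\ref{thm:main_thm}) by passing to the dual line arrangement via projective duality and applying Proposition~\ref{thm: duality}, with part (2) following from Definition~\ref{def:near_pencil_point}. Your added care about injectivity of $\phi$ and the exact equality of counts in the duality is a reasonable bookkeeping check, but it introduces no new ideas beyond what the paper uses.
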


\section{Further Perspectives}

There is a rich theory of hyperplane arrangements which has been studied tropically and tropical line arrangements turn out to be the base case. Therefore, a very logical question to ask after studying the results for line arrangements is to look for analogues for general hyperplane arrangements in higher dimensions. Also, the regular subdivisions of Newton polytopes via the Cayley trick, tie up well with mixed subdivisions and tropical line arrangements. Using this, we have verified the bound in Theorem \ref{thm:main_thm} computationally for the case of $n = 4$, by considering all mixed subdivisions of 4 $\cdot \Delta_{2}$, via computing its secondary fan using polymake. However, such computations become non-viable as $n$ increases beyond 5. We also provide scripts to compute the dual Newton subdivision and the number of stable intersections for a given tropical line arrangement. The relevant scripts and code for these computations can be found at the following link

\begin{center}
 \url{https://sites.google.com/view/ayushkumartewari/home}  
\end{center}

In \cite{AF09} a \textbf{type} of a point is defined as follows,

\begin{definition}
A $(n,d)$ \textit{type} is a $n$ tuple $A = (A_{1}, \hdots , A_{n})$ of nonempty subsets of $[d]:= \{ 1,2, \hdots ,d \}$. The $A_{i}$'s are called \textit{coordinates} of $A$, $1, \hdots, n$ are called the \textit{positions} and  $1, \hdots, d$ are called the \textit{directions}.
\end{definition}

which assigns a tuple to each point in the plane based on its location with respect to a collection of hyperplanes, which in our case are lines and so $d = 3$ in this case. It might be interesting to look into the derivation of our results in terms of these types. Figure \ref{fig:oriented_types}, depicts the types corresponding to all the various faces that are present in a linear Newton subdivision. The $'*'$ in the tuples represents a singleton, while the coordinates which have multiple elements may not occur consecutively, but they can be made consecutive, by rearranging the way we count the lines in the arrangement. We can obtain the type for a face $P$ with edge lengths greater than one, by assigning copies of the directions $12,13$ or $23$ depending on the direction of coaxiality of other lines with the vertex of the line dual to $P$. Such an analysis could help in trying to look for generalizations of our results in higher dimensions.

Also there has been a lot of interest in the study of tropical lines present in tropical cubic surfaces owing to the existence of classical results such as the famous 27 lines on a cubic surface, which has provided detailed analysis about lines embedded in surfaces and is explored widely in \cite{PV19} and \cite{JPS19}, and one can try to generalize our results to higher dimensions using techniques from their work.

\begin{figure}
\begin{center}
\begin{tikzpicture}[scale=0.23]
\draw[] (0,0) -- (-2,0);
\draw[] (0,0) -- (0,-2);
\draw[] (0,0) -- (2,2);
\draw[thick,dashed] (0,0) circle (1cm);
\draw[<->] (3,0) -- (4,0);
\draw[] (5,0.5) -- (5,-0.5);
\draw[] (5,-0.5) -- (6,-0.5);
\draw[] (6,-0.5) -- (5,0.5);
\draw[<->] (9,0) -- (11,0);
\fill[black] (22,0) circle (.0001cm) node[]{$(*, \hdots , 123, *, \hdots , *)\quad$};
\draw[] (0,-4) -- (-2,-4);
\draw[] (0,-4) -- (0,-5.5);
\draw[] (0,-4) -- (2,-2);
\draw[] (0,-4) -- (-2,-6);
\draw[<->] (3,-4) -- (4,-4);
\draw[] (5,-4) -- (5,-3);
\draw[] (5,-4) -- (6,-5);
\draw[] (6,-5) -- (7,-5);
\draw[] (5,-3) -- (7,-5);
\draw[thick,dashed] (0,-4) circle (1cm);
\draw[<->] (9,-4) -- (11,-4);
\fill[black] (22,-4) circle (.0001cm) node[]{$(*, \hdots , 123, 23, *, \hdots , *)\quad$};
%\node[draw] at (15,-7) {$2^{nd}$ $\leftrightarrow$ Non-uniform};
\draw[] (0,-8) -- (-2,-8);
\draw[] (0,-8) -- (0,-9.5);
\draw[] (0,-8) -- (2,-6);
\draw[] (0,-8) -- (2,-8);
\draw[<->] (3,-8) -- (4,-8);
\draw[] (5,-9) -- (5,-7);
\draw[] (6,-8) -- (5,-7);
\draw[] (6,-8) -- (6,-9);
\draw[] (5,-9) -- (6,-9);
\draw[thick,dashed] (0,-8) circle (1cm);
\draw[<->] (9,-8) -- (11,-8);
\fill[black] (22,-8) circle (.0001cm) node[]{$(*, \hdots , 123, 12, *, \hdots , *)\quad$};
%\node[draw] at (3,-9) {$2^{nd}$ $\leftrightarrow$ Non-uniform};
\draw[] (0,-12) -- (-2,-12);
\draw[] (0,-12) -- (0,-13.5);
\draw[] (0,-12) -- (2,-10);
\draw[] (0,-12) -- (0,-10.5);
\draw[<->] (3,-12) -- (4,-12);
\draw[] (5,-12) -- (5,-13);
\draw[] (5,-13) -- (7,-13);
\draw[] (5,-12) -- (6,-12);
\draw[] (6,-12) -- (7,-13);
\draw[thick,dashed] (0,-12) circle (1cm);
\draw[<->] (9,-12) -- (11,-12);
\fill[black] (22,-12) circle (.0001cm) node[]{$(*, \hdots , 123, 13, *, \hdots , *)\quad$};
%\node[draw] at (15,-5) {$2^{nd}$ $\leftrightarrow$ Non-uniform};
\draw[] (0,-16) -- (-2,-16);
\draw[] (0,-16) -- (0,-17.5);
\draw[] (0,-16) -- (2,-14);
\draw[] (0,-16) -- (0,-14.5);
\draw[] (0,-16) -- (-2,-18);
\draw[<->] (3,-16) -- (4,-16);
\draw[thick,dashed] (0,-16) circle (1cm);
\draw[] (5,-16) -- (5,-15);
\draw[] (5,-15) -- (6,-15);
\draw[] (6,-15) -- (8,-17);
\draw[] (8,-17) -- (6,-17);
\draw[] (6,-17) -- (5,-16);
\draw[<->] (9,-16) -- (11,-16);
\fill[black] (22,-16) circle (.0001cm) node[]{$(*, \hdots , 123, 23, 13, \hdots , *)\quad$};
%\node[draw] at (3,-19) {$2^{nd}$ $\leftrightarrow$ Non-uniform};
\draw[] (0,-20) -- (-2,-20);
\draw[] (0,-20) -- (0,-21.5);
\draw[] (0,-20) -- (2,-18.5);
\draw[] (0,-20) -- (0,-18);
\draw[] (0,-20) -- (2,-20);
\draw[<->] (3,-20) -- (4,-20);
\draw[thick,dashed] (0,-20) circle (1cm);
\draw[] (5,-19) -- (5,-21);
\draw[] (5,-21) -- (7,-21);
\draw[] (7,-21) -- (7,-20);
\draw[] (7,-20) -- (6,-19);
\draw[] (6,-19) -- (5,-19);
\draw[<->] (9,-20) -- (11,-20);
\fill[black] (22,-20) circle (.0001cm) node[]{$(*, \hdots , 123, 12, 13, \hdots , *)\quad$};
%\node[draw] at (15,-15) {$2^{nd}$ $\leftrightarrow$ Non-uniform};
\end{tikzpicture}
\qquad
\begin{tikzpicture}[scale=0.23]
\draw[] (0,-24) -- (-2,-24);
\draw[] (0,-24) -- (0,-25.5);
\draw[] (0,-24) -- (2,-22);
\draw[] (0,-24) -- (2,-24);
\draw[] (0,-24) -- (-2,-26);
\draw[<->] (3,-24) -- (4,-24);
\draw[thick,dashed] (0,-24) circle (1cm);
\draw[] (5,-24) -- (5,-22);
\draw[] (5,-24) -- (6,-25);
\draw[] (6,-25) -- (7,-25);
\draw[] (7,-24) -- (7,-25);
\draw[] (7,-24) -- (5,-22);
\draw[<->] (9,-24) -- (11,-24);
\fill[black] (22,-24) circle (.0001cm) node[]{$(*, \hdots , 123, 23, 12, \hdots , *)\quad$};
%\node[draw] at (3,-21) {$2^{nd}$ $\leftrightarrow$ Non-uniform};
\draw[] (0,-28) -- (-2,-28);
\draw[] (0,-28) -- (0,-29.5);
\draw[] (0,-28) -- (2,-26);
\draw[] (0,-28) -- (0,-26.5);
\draw[] (0,-28) -- (2,-28);
\draw[] (0,-28) -- (-2,-30);
\draw[<->] (3,-28) -- (4,-28);
\draw[thick,dashed] (0,-28) circle (1cm);
\draw[] (5,-28) -- (5,-26);
\draw[] (5,-26) -- (6,-26);
\draw[] (6,-26) -- (8,-28);
\draw[] (8,-28) -- (8,-29);
\draw[] (8,-29) -- (6,-29);
\draw[] (6,-29) -- (5,-28);
\draw[<->] (9,-28) -- (11,-28);
\fill[black] (22,-28) circle (.0001cm) node[]{$(*, \hdots , 123, 23, 13, 12, \hdots , *)\quad$};
%\node[draw] at (15,-21) {$2^{nd}$ $\leftrightarrow$ Non-uniform};
\draw[] (-2,-32) -- (2,-32);
\draw[] (2,-30) -- (-2,-34);
\draw[] (0,-30.5) -- (0,-33.5);
\draw[<->] (3,-32) -- (4,-32);
\draw[thick,dashed] (0,-32) circle (1cm);
\draw[] (5,-32) -- (6,-33);
\draw[] (6,-33) -- (7,-33);
\draw[] (7,-33) -- (7,-32);
\draw[] (7,-32) -- (6,-31);
\draw[] (5,-31) -- (6,-31);
\draw[] (5,-32) -- (5,-31);
\draw[<->] (9,-32) -- (11,-32);
\fill[black] (22,-32) circle (.0001cm) node[]{$(*, \hdots , 12, 23, 13, \hdots , *)\quad$};
%\node[draw] at (3,-27) {$1^{st}$ $\leftrightarrow$ Semi-uniform};
\draw[] (-2,-36) -- (2,-36);
\draw[] (0,-34.5) -- (0,-37.5);
\draw[<->] (3,-36) -- (4,-36);
\draw[thick,dashed] (0,-36) circle (1cm);
\draw[] (5,-36.5) -- (5,-35.5);
\draw[] (5,-35.5) -- (6,-35.5);
\draw[] (6,-35.5) -- (6,-36.5);
\draw[] (5,-36.5) -- (6,-36.5);
\draw[<->] (9,-36) -- (11,-36);
\fill[black] (22,-36) circle (.0001cm) node[]{$(*, \hdots , 12, 13, *, \hdots , *)\quad$};
%\node[draw] at (15,-27) {$1^{st}$ $\leftrightarrow$ Semi-uniform};
\draw[] (-2,-40) -- (2,-40);
\draw[] (-2,-42) -- (2,-38);
\draw[<->] (3,-40) -- (4,-40);
\draw[thick,dashed] (0,-40) circle (1cm);
\draw[] (5,-39.5) -- (5,-40.5);
\draw[] (5,-40.5) -- (6,-41.5);
\draw[] (6,-41.5) -- (6,-40.5);
\draw[] (6,-40.5) -- (5,-39.5);
\draw[<->] (9,-40) -- (11,-40);
\fill[black] (22,-40) circle (.0001cm) node[]{$(*, \hdots , 23, 12, \hdots , *)\quad$};
%\node[draw] at (3,-33) {$1^{st}$ $\leftrightarrow$ Semi-uniform};
\draw[] (0,-42) -- (0,-46);
\draw[] (-2,-46) -- (2,-42);
\draw[<->] (3,-44) -- (4,-44);
\draw[thick,dashed] (0,-44) circle (1cm);
\draw[] (5,-43.5) -- (6,-43.5);
\draw[] (5,-43.5) -- (6,-44.5);
\draw[] (6,-43.5) -- (7,-44.5);
\draw[] (6,-44.5) -- (7,-44.5);
\draw[<->] (9,-44) -- (11,-44);
\fill[black] (22,-44) circle (.0001cm) node[]{$(*, \hdots , 23, 13, \hdots , *)\quad$};
%\node[draw] at (15,-33) {$1^{st}$ $\leftrightarrow$ Semi-uniform};
\end{tikzpicture}
%\vspace{1cm}
%\begin{tikzpicture}[scale=0.3]
%\draw[] (23,-2) -- (23,-4);
%\draw[] (23,-2) -- (25,0);
%\draw[] (23,-2) -- (21,-2);
%\end{tikzpicture}
\caption{All possible shapes of faces present in the Newton subdivision of a tropical line arrangement; with the corresponding type in the tropical oriented matroid on the right}
\label{fig:oriented_types}
\end{center}
\end{figure}
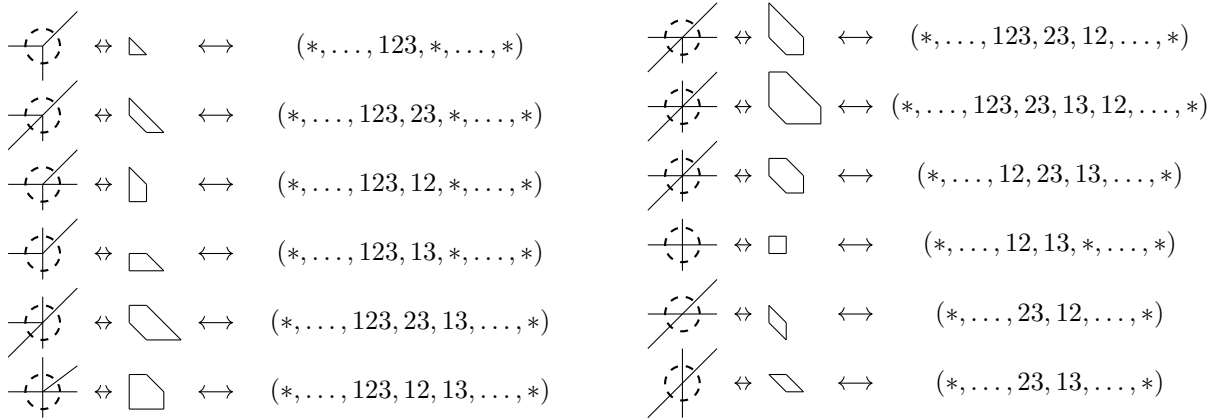

\bibliographystyle{siam}

\bibliography{biblio.bib}

\end{document}